\theoremstyle{plain}
\newtheorem{theorem}{Theorem}[section]
\newtheorem{lemma}[theorem]{Lemma}
\newtheorem{corollary}[theorem]{Corollary}
\newtheorem{prop}[theorem]{Proposition}
\theoremstyle{remark}
\newtheorem{remark}[theorem]{Remark}
\newtheorem{example}[theorem]{Example}
\newtheorem*{note*}{Note}
\newtheorem*{remark*}{Remark}
\newtheorem*{example*}{Example}
\theoremstyle{definition}
\newtheorem*{definition*}{Definition}
\newtheorem{definition}[theorem]{Definition}
\newcommand{\Z}{\mathbb{Z}}
\newcommand{\Q}{\mathbb{Q}}
\newcommand{\N}{\mathbb{N}}
\newcommand{\Ann}{\mathrm{Ann}}
\newcommand{\Gal}{\mathrm{Gal}}
\newcommand{\Tr}{\mathrm{Tr}}
\newcommand{\Gl}{\mathrm{GL}}
\newcommand{\cl}{\mathrm{cl}}
\newcommand{\im}{\mathrm{im}}
\newcommand{\coker}{\mathrm{coker}}
\newcommand{\nr}{\mathrm{nr}}
\newcommand{\Hom}{\mathrm{Hom}}
\newcommand{\Fit}{\mathrm{Fit}}
\newcommand{\tr}{\mathrm{tr}}
\numberwithin{equation}{section}
 \newcommand{\onto}{\twoheadrightarrow}
 \newcommand{\Fitt}{\mathrm{Fitt}}
 \newcommand{\Br}{\mathrm{Br}}
\title[Noncommutative Fitting invariants and improved annihilation results]{Noncommutative Fitting invariants\\and improved annihilation results}
\author{Henri Johnston}
\address{Henri Johnston\\
St.\ John's College\\
Cambridge CB2 1TP\\
United Kingdom
}
\email{H.Johnston@dpmms.cam.ac.uk}
\urladdr{http://www.dpmms.cam.ac.uk/$\sim$hlj31}
\author{Andreas Nickel}
\address{Andreas Nickel\\
Universit\"{a}t Bielefeld\\
Fakult\"{a}t f\"{u}r Mathematik\\
Postfach 100131\\
Universit\"{a}tsstr. 25\\
33501 Bielefeld\\
Germany}
\email{anickel3@math.uni-bielefeld.de}
\urladdr{http://www.math.uni-bielefeld.de/$\sim$anickel3/english.html}
\subjclass[2010]{16H05, 16H10, 16L30}
\keywords{Fitting invariant, annihilator}
\date{Version of 23rd January 2013}
\thanks{The second author acknowledges financial support provided by the DFG.}
\begin{document}

\maketitle

\begin{abstract}
To each finitely presented module $M$ over a commutative ring $R$
one can associate an $R$-ideal $\Fit_R(M)$ which is called the (zeroth) Fitting ideal of $M$ over $R$ and which is always contained in the $R$-annihilator of $M$.
In an earlier article, the second named author generalised this notion by
replacing $R$ with a (not necessarily commutative) $\mathfrak{o}$-order $\Lambda$ in a finite dimensional separable algebra, where $\mathfrak{o}$ is an integrally closed complete commutative noetherian local domain.
To obtain annihilators, one has to multiply
the Fitting invariant of a (left) $\Lambda$-module $M$ by a certain ideal $\mathcal{H}(\Lambda)$
of the centre of $\Lambda$.
In contrast to the commutative case, this ideal can be properly contained in the centre of $\Lambda$.
In the present article, we determine explicit lower bounds for $\mathcal{H}(\Lambda)$ in many cases.
Furthermore, we define a class of `nice' orders $\Lambda$ over which Fitting invariants
have several useful properties such as good behaviour with respect to direct sums of modules,
computability in a certain sense, and $\mathcal{H}(\Lambda)$ being the best possible.
\end{abstract}

\section{Introduction}

Let $R$ be a commutative ring (with identity) and let $M$ be a finitely presented $R$-module.
If we choose a presentation
\begin{equation}\label{eqn:intro-presentation}
R^{a} \stackrel{h}{\longrightarrow} R^{b} \onto M
\end{equation}
we may identify the homomorphism $h$ with an $a \times b$ matrix with entries in $R$.
If $a \geq b$, the (zeroth) Fitting ideal of $M$ over $R$, denoted by $\Fit_{R}(M)$, is
defined to be the $R$-ideal generated by all $b \times b$ minors of the matrix corresponding to
$h$.
If $a<b$ then $\Fit_{R}(M)$ is defined to be the zero ideal of $R$.
A key point is that this definition is independent of the choice of presentation $h$.
This notion was introduced by H.\ Fitting \cite{0016.05003} and is now a very important
tool in commutative algebra thanks to several useful properties.
In particular, $\Fit_{R}(M)$ is always a subset of $\Ann_{R}(M)$, the $R$-annihilator of $M$.
Furthermore, $\Fit_{R}(M)$ is often computable, thanks to being independent of the choice of
presentation $h$ and, for example, good behaviour with respect to quotients of $R$, as well as
epimorphisms and direct sums of $R$-modules.
For a full account of the theory, we refer the reader to \cite{MR0460383}.

It is natural to ask whether analogous invariants can be defined for modules over noncommutative rings;
indeed, there have been several attempts to overcome the technical obstacles involved in order to do this.
In \cite{MR959761} and \cite{MR1013481}, J.\ Susperregui considered two particular cases:
skewcommutative graded rings and rings of differential operators satisfying the left Ore property.
In his Ph.D.\ thesis  \cite{grime_thesis}, P.\ Grime considered several cases including matrix rings
over commutative rings, as well as certain hereditary orders and (twisted) group rings.
We say that a (left) $R$-module $M$ has a quadratic presentation if one can take $a=b$ in
\eqref{eqn:intro-presentation}.
In the case where  $G$ is a finite group and $R$ is a group ring $\Z[G]$, $\Z_{(p)}[G]$, or $\Z_{p}[G]$
for some prime $p$,  A.\ Parker in his  Ph.D.\ thesis \cite{parker_thesis} defined noncommutative Fitting
invariants for modules with a quadratic presentation.

Let $A$ be a finite dimensional separable algebra over a field $F$ and $\Lambda$ an $\mathfrak{o}$-order in $A$,
where $\mathfrak{o}$ is an integrally closed complete commutative noetherian local domain with
field of quotients $F$. We call such an order $\Lambda$ a Fitting order; a standard example is the group ring $\Z_{p}[G]$ where
$p$ is a prime and $G$ is a finite group. We denote by $\zeta(A)$ and $\zeta(\Lambda)$ the centres
of $A$ and $\Lambda$, respectively.
All modules are henceforth assumed to be left modules unless otherwise stated.
Let $M$ be a $\Lambda$-module admitting a finite
presentation
\[
\Lambda^{a} \stackrel{h}{\longrightarrow} \Lambda^{b} \onto M.
\]
In \cite{MR2609173}, the Fitting invariant $\Fitt_{\Lambda}(h)$ is defined to be
an equivalence class of a certain $\zeta(\Lambda)$-submodule of $\zeta(A)$ generated by reduced norms.
In the case that $\Lambda$ is commutative, the reduced norm is the same as the usual determinant
and this notion is compatible with the classical definition of Fitting ideal described above.
In contrast to the commutative case, $\Fitt_{\Lambda}(h)$ does in general depend on $h$;
however, for a given $M$ there exists a distinguished Fitting invariant $\Fitt_{\Lambda}^{\max}(M)$
that is maximal among all $\Fitt_{\Lambda}(h)$.
Moreover, if $M$ admits a quadratic presentation $h$, then $\Fitt_{\Lambda}(h)$ is independent of the choice of
$h$ (as long as $h$ is quadratic) and the definition is compatible with that given by A.\ Parker in
his thesis \cite{parker_thesis}. It is also shown in \cite{MR2609173} that $\Fitt_{\Lambda}^{\max}(M)$
enjoys many of the useful properties of the commutative case (see Theorem \ref{thm:fitt-thm}).
To obtain annihilators from $\Fitt_{\Lambda}^{\max}(M)$, one has to multiply by a certain ideal $\mathcal{H}(\Lambda)$
of $\zeta(\Lambda)$; if $\Lambda$ is commutative or maximal, then $\mathcal{H}(\Lambda)=\zeta(\Lambda)$,
but in general $\mathcal{H}(\Lambda)$ is a proper ideal of $\zeta(\Lambda)$.
Though much progress is made in \cite{MR2609173}, several questions remain:
\begin{enumerate}
\item Can $\mathcal{H}(\Lambda)$ be computed or approximated explicitly?
\item Does $\Fitt_{\Lambda}^{\max}(M)$ behave well with respect to direct sums of $\Lambda$-modules?
\item For a left ideal $I$ of $\Lambda$, can we give an explicit formula for $\Fitt_{\Lambda}^{\max}(\Lambda/I)$?
\item Are there certain Fitting orders $\Lambda$ for which $\Fitt^{\max}_{\Lambda}(M)$ can be computed from a presentation $h$ of $M$, independently of the choice of $h$?
\end{enumerate}
The present article goes some way towards answering these questions.

The motivation behind the theory of noncommutative Fitting invariants comes from arithmetic. 
In \cite{MR2771125} special values of $L$-functions attached to (not necessarily abelian) 
Galois extensions of number fields were used to construct annihilators of ideal class groups. 
In \cite{MR2609173} noncommutative Fitting invariants were used to predict similar annihilators 
under the assumption of the relevant special case of the equivariant Tamagawa number conjecture (ETNC)
of Burns and Flach (\cite{MR1884523}, \cite{MR1863302}). 
The results of the present article can be used to make these annihilation results more explicit 
(see Remark \ref{rmk:non-ab-stick-improvement} for a more detailed account and Remark \ref{rmk:arith-more-explicit}
for further examples of this kind). 
More generally, noncommutative Fitting invariants appear to be the natural formalism which arises when 
one attempts to derive concrete consequences of the abstract formalism of either the ETNC or the 
main conjectures of noncommutative Iwasawa theory 
(see \cite{EIMC}, \cite{MR2801311}, \cite{nonabstark},  \cite{MR2822866}).

We now describe the contents and main results in more detail.
In \S \ref{sec:matrix-over-commutative} we consider the case of a matrix ring $\Lambda$ over an arbitrary commutative ring $R$ (with identity).
We use explicit Morita equivalence of $\Lambda$ and $R$ to define an ideal of $R$
(the definition is essentially equivalent to that of \cite[\S 5.2]{grime_thesis}),
and go on to establish a number of useful properties.
This ideal is equal to the usual Fitting ideal in the commutative case (i.e. $\Lambda=R$).
We also give a slight sharpening of an existing result on classical Fitting ideals.
In \S \ref{sec:non-comm-fitt} we review background material and the main results of \cite{MR2609173}.
We return to the situation in which $\Lambda$ is a Fitting order contained in $A$ and
introduce $\Fit_{\Lambda}(h)$ as an alternative to $\Fitt_{\Lambda}(h)$.
The former is a $\zeta(\Lambda)$-submodule of $\zeta(A)$ whereas the latter (originally introduced
in \cite{MR2609173}) is an equivalence class of such modules; the two definitions are closely related.
We define $\Fit_{\Lambda}^{\max}(M)$ analogously to $\Fitt_{\Lambda}^{\max}(M)$.
Furthermore, we show that $\Fit_{\Lambda}^{\max}(M)$ is equal to the ideal defined in \S \ref{sec:matrix-over-commutative} when $\Lambda$
is both a Fitting order and a matrix ring over a commutative ring.
In \S \ref{sec:nice-fItting-orders} we introduce the notion of a `nice' Fitting order.
A Fitting order is defined to be nice if it is a finite direct sum of maximal orders and matrix rings over commutative rings.
Such an order has particularly useful properties; indeed,
the answer to each of questions (i)-(iv) above is affirmative in this case.
In particular, if $\Lambda$ is nice then
$\mathcal{H}(\Lambda)=\zeta(\Lambda)$ and so
$\Fit_{\Lambda}^{\max}(M)$ is always a subset of $\Ann_{\zeta(\Lambda)}(M)$.
We show that if $p$ is a prime and $G$ is a finite group then the group ring $\Z_{p}[G]$
is a nice Fitting order if and only if $p$ does not divide the order of the commutator subgroup $G'$.
Moreover, we show a similar result for completed group algebras $\Z_{p} [[ G ]]$, where $G$ is
a $p$-adic Lie group of dimension $1$.
In \S \ref{sec:quots-by-left-ideals} we explicitly compute the maximal Fitting invariant of the quotient of a Fitting order $\Lambda$ by a left ideal $I$ when either $\Lambda$ is nice or $I$ is principal;
we give a containment in other cases.
In \S \ref{sec:ann-and-change-of-order} we compute certain conductors and thereby give explicit bounds for $\mathcal{H}(\Lambda)$ in the case that $\Lambda$ is not nice;
we also give further annihilation results relating to change of order.

\subsection*{Notation and conventions}
All rings are assumed to have an identity element and all modules are assumed
to be left modules unless otherwise  stated. We denote the set of all $m \times n$
matrices with entries in a ring $R$ by $M_{m \times n} (R)$ and in the case $m=n$
the group of all invertible elements of $M_{n \times n} (R)$ by $\Gl_{n}(R)$.
We write $\zeta(R)$ for the centre of $R$ and $K_{1}(R)$ for the Whitehead group
(see \cite[\S 40]{MR892316}).

\subsection*{Acknowledgements}
The authors are grateful to Cornelius Greither for several useful comments and suggestions, to
Steve Wilson for providing a copy of Peter Grime's Ph.D.\ thesis \cite{grime_thesis}, 
and to the referee for several corrections and useful suggestions regarding the exposition.

\section{Matrix rings over commutative rings}\label{sec:matrix-over-commutative}

Let $R$ be a commutative ring and fix $n \in \N$.
Let $\Lambda = M_{n \times n}(R)$ and for $1 \leq i,j \leq n$ let $e_{ij} \in \Lambda$
be the matrix with $1$ in position $(i,j)$ and $0$ everywhere else.
Then
\[
e_{ij}e_{kl} = \left\{ \begin{array}{ll}
e_{il} & \textrm{if } j=k,      \\
0 & \textrm{otherwise}.
\end{array}
\right.
\]

\begin{definition}\label{def:fitt-matrix}
Let $M$ be a finitely presented $\Lambda$-module.
Then define
\[
\Fit_{\Lambda}(M) := \Fit_{R}(e_{11}M),
\]
where the right hand side denotes the usual Fitting ideal over a commutative ring.
\end{definition}

\begin{remark}
In the case $n=1$ we have $\Lambda = R$ and $e_{11}=1$, so Definition \ref{def:fitt-matrix}
is just the standard definition in this case and hence our notation is consistent.
\end{remark}

\begin{lemma}\label{lem:e_ii-isoms}
Let $M$ be a $\Lambda$-module.
For $1 \leq i,j \leq n$ we have $e_{ii}M \simeq e_{jj}M$ as $R$-modules.
\end{lemma}

\begin{proof}
Define an $R$-module homomorphism  $\alpha_{ij}: e_{ii}M \rightarrow e_{jj}M$ by $x \mapsto e_{ji} x$.
Note that this is in fact well-defined since $e_{ji}M = e_{jj}e_{ji}M \subset e_{jj}M$. Define $\alpha_{ji}$ symmetrically.
Then
\[
\alpha_{ji} \circ \alpha_{ij} (x) = e_{ij}e_{ji} x = e_{ii} x = x.
\]
So by symmetry $\alpha_{ij}$ and $\alpha_{ji}$ are mutually inverse and hence are isomorphisms.
\end{proof}

We give some of the important properties of Fitting ideals over $\Lambda$.

\begin{theorem}\label{thm:properties-matrix-over-comm}
Let $M$, $M_{1}$, $M_{2}$ and $M_{3}$ be finitely presented $\Lambda$-modules.
\begin{enumerate}
  \item For any $1 \leq i \leq n$, we have $\Fit_{\Lambda}(M) = \Fit_{R}(e_{ii}M)$.
  \item We have $\Fit_{\Lambda}(M) \subset \Ann_{R}(M)$.
  \item If $M_{1} \onto M_{2}$ is an epimorphism then
  $\Fit_{\Lambda}(M_{1}) \subset \Fit_{\Lambda}(M_{2})$.
  \item If $M_{2} = M_{1} \oplus M_{3}$ then
  $ \Fit_{\Lambda}(M_{2}) = \Fit_{\Lambda}(M_{1}) \cdot \Fit_{\Lambda}(M_{3})$.
  \item If $M_{1} \stackrel{\iota}{\rightarrow} M_{2} \onto M_{3}$
  is an exact sequence ($\iota$ need not be injective)
  then \[ \Fit_{\Lambda}(M_{1}) \cdot \Fit_{\Lambda}(M_{3}) \subset \Fit_{\Lambda}(M_{2}). \]
  \item If $M_{1} \hookrightarrow M_{2} \onto M_{3}$ is an exact sequence and
   $M_{3}$ has a quadratic presentation
   (i.e. of the form $\Lambda^{k} \rightarrow \Lambda^{k} \onto M_{3}$ for some $k \in \N$) then
  \[ \Fit_{\Lambda}(M_{1}) \cdot \Fit_{\Lambda}(M_{3}) = \Fit_{\Lambda}(M_{2}). \]
  \item For any map $R \rightarrow S$ of commutative rings we have
  \[ \Fit_{S \otimes_{R} \Lambda}(S \otimes_{R} M) = S \cdot \Fit_{\Lambda}(M). \]
  \item We have $\Fit_{R}(M) = \Fit_{\Lambda}(M)^{n}$.
  \item If $I$ is a finitely generated two-sided ideal of $\Lambda$ then $I=M_{n \times n}(J)$
  for some ideal $J$ of $R$ and so $\Lambda/I = M_{n \times n}(R/J)$;
  hence we have $\Fit_{\Lambda}(\Lambda/I) = J^{n}$.
\end{enumerate}
\end{theorem}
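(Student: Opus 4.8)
The plan is to prove the nine statements by reducing each to the corresponding classical fact about Fitting ideals over the commutative ring $R$, using the functor $M \mapsto e_{11}M$ together with Lemma \ref{lem:e_ii-isoms}. The key observation, which I would establish first, is that $M \mapsto e_{11}M$ is the standard Morita equivalence between $\Lambda$-modules and $R$-modules: it is exact, it carries $\Lambda^{b}$ to $R^{b}$ (since $e_{11}\Lambda^{b} \cong (e_{11}\Lambda e_{11})^{b} \cong R^{b}$ after identifying $e_{11}\Lambda e_{11}$ with $R$), it takes epimorphisms to epimorphisms and direct sums to direct sums, and it satisfies $e_{11}(\Lambda/I) = (R/J)$ when $I = M_{n\times n}(J)$. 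I would record these facts in a short preliminary paragraph, since everything else follows formally.

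With this in hand, most items are immediate. Statement (i) follows from Lemma \ref{lem:e_ii-isoms} and the fact that $\Fit_{R}$ depends only on the $R$-isomorphism class. For (iii), (iv), (v), and (vi): apply $e_{11}(-)$ to the given exact sequence of $\Lambda$-modules to get the corresponding exact sequence of $R$-modules (here I need that $e_{11}\Lambda^{b}\cong R^{b}$ so that a quadratic $\Lambda$-presentation becomes a quadratic $R$-presentation), then quote the classical results on Fitting ideals and epimorphisms, direct sums, and exact sequences — precisely the properties recalled in the introduction and available from \cite{MR0460383}. For (vii), apply $e_{11}(-)$ and note $S\otimes_{R}\Lambda = M_{n\times n}(S)$ with $e_{11}(S\otimes_{R}M) = S\otimes_{R}e_{11}M$, then use base change for classical Fitting ideals. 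For (ix), the structure of two-sided ideals of a matrix ring gives $I = M_{n\times n}(J)$, whence $\Lambda/I = M_{n\times n}(R/J)$ and $\Fit_{\Lambda}(\Lambda/I) = \Fit_{R}(R/J) = J$; wait — this should give $J^{n}$ via (viii), so I would deduce (ix) from (viii) rather than directly, or compute $e_{11}(\Lambda/I) = R/J$ directly and note the claim in (ix) must be read through the relation with $\Fit_{R}$. Let me restructure: prove (ix) by first observing $\Lambda/I \cong M_{n\times n}(R/J)$ so $\Fit_{\Lambda}(\Lambda/I) = \Fit_{R/J}((R/J)) $; but as a module over itself via $e_{11}$ one gets $\Fit_{\Lambda}(\Lambda/I) = \Fit_{R}(R/J)$ using that $e_{11}(\Lambda/I)$ as an $R$-module is $R/J$, which has Fitting ideal $J$ over $R$ — and then $J^n$ is the statement for $\Fit_R$, so (ix) as literally stated reads $J^n$ only in combination with (viii). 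I would present (ix) as following from $e_{11}(\Lambda/I) = R/J$ together with (viii).

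The genuinely substantive items are (ii) and (viii). For (ii), I need $\Fit_{\Lambda}(M) = \Fit_{R}(e_{11}M) \subset \Ann_{R}(M)$, not merely $\Ann_{R}(e_{11}M)$; the point is that for $r\in R = \zeta(\Lambda)$, if $r$ kills $e_{11}M$ then it kills $e_{ii}M$ for all $i$ (by $R$-linearity of the maps $\alpha_{ij}$, since $\alpha_{ij}(rx) = e_{ji}rx = r e_{ji}x = r\alpha_{ij}(x)$), hence it kills $M = \bigoplus_i e_{ii}M$. So I would spell out this centrality argument explicitly. For (viii), the claim $\Fit_{R}(M) = \Fit_{\Lambda}(M)^{n}$ treats $M$ as an $R$-module by restriction of scalars; as an $R$-module, $M \cong (e_{11}M)^{n}$ by Lemma \ref{lem:e_ii-isoms}, and then the classical behaviour of Fitting ideals under direct sums gives $\Fit_{R}(M) = \Fit_{R}(e_{11}M)^{n} = \Fit_{\Lambda}(M)^{n}$. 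I would need to check that $M$ is finitely presented as an $R$-module (it is, since $R$ is noetherian in the cases of interest, or more robustly since a finite $\Lambda$-presentation restricts to a finite $R$-presentation as $\Lambda$ is finitely generated free over $R$). The main obstacle is bookkeeping rather than depth: making sure the quadratic-presentation hypothesis in (vi) transfers correctly under $e_{11}(-)$, and keeping straight which statements are about $\Fit_{\Lambda}$ versus $\Fit_{R}$ so that the exponent $n$ in (viii) and (ix) is placed correctly.
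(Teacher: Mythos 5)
Your overall strategy---reduce everything to classical Fitting-ideal facts over $R$ via the exact Morita functor $M \mapsto e_{11}M$---is exactly the approach the paper takes, and items (i)--(viii) go through essentially as you describe, modulo one false isomorphism: you write $e_{11}\Lambda^{b} \cong (e_{11}\Lambda e_{11})^{b} \cong R^{b}$, but $e_{11}\Lambda$ and $e_{11}\Lambda e_{11}$ are different objects. The corner ring $e_{11}\Lambda e_{11}$ is indeed $\cong R$, but the bimodule $e_{11}\Lambda$ is the first row space, free of rank $n$ over $R$; hence $e_{11}\Lambda^{b} \cong R^{nb}$. This rank inflation by $n$ does not derail (iii)--(vii) (a quadratic presentation $\Lambda^{k} \to \Lambda^{k}$ still becomes the quadratic $R$-presentation $R^{nk} \to R^{nk}$), and it is precisely what accounts for the exponents appearing in (viii) and (ix). One further small point on (v): the classical inclusion for Fitting ideals is stated for a \emph{short} exact sequence, so you must first reduce to the case that $\iota$ is injective by replacing $M_{1}$ with $\ker(M_{2} \onto M_{3})$ via (iii), as the paper does.

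The genuine gap is in your treatment of (ix). The claim $e_{11}(\Lambda/I) = R/J$ is wrong for the same reason as above: since $\Lambda/I = M_{n\times n}(R/J)$, its first row space is $e_{11}(\Lambda/I) \cong (R/J)^{n}$ as an $R$-module, not $R/J$. With this corrected, (ix) follows directly from (i), the fact $\Fit_{R}(R/J) = J$, and the multiplicativity of $\Fit_{R}$ over direct sums: $\Fit_{\Lambda}(\Lambda/I) = \Fit_{R}\bigl((R/J)^{n}\bigr) = J^{n}$, which is how the paper argues. Your fallback plan---deducing (ix) from (viii)---does not work even after correcting the row-space computation: applying (viii) to $\Lambda/I \cong (R/J)^{n^{2}}$ over $R$ only yields $J^{n^{2}} = \Fit_{\Lambda}(\Lambda/I)^{n}$, and you cannot extract $\Fit_{\Lambda}(\Lambda/I) = J^{n}$ from this because $X \mapsto X^{n}$ is not injective on ideals of a general commutative ring.
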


\begin{remark}\label{rmk:fit-cent-fit-lambda}
If $R$ is a Dedekind domain then factorisation of ideals in $R$ is unique and so
Theorem \ref{thm:properties-matrix-over-comm} (viii) shows that $\Fit_{\Lambda}(M)$
is completely determined by $\Fit_{R}(M)$ in this case.
\end{remark}

\begin{remark}
We note that $\Ann_{\Lambda}(M) := \{ x \in \Lambda \mid x \cdot M=0 \}$ is always a two-sided ideal of $\Lambda$ and from this it is straightforward to show
that $\Ann_{\Lambda}(M) = M_{n \times n}(\Ann_{R}(M))$.
Thus nothing is lost by computing or approximating $\Ann_{R}(M)$ rather than $\Ann_{\Lambda}(M)$.
\end{remark}

\begin{proof}
Definition \ref{def:fitt-matrix} and Lemma \ref{lem:e_ii-isoms} give (i).
For (ii), note that $e_{11}+\cdots+e_{nn}$ is the identity matrix in $\Lambda$
and that $e_{ii}M \cap e_{jj}M = 0$ for $i \neq j$.
Hence as $R$-modules
\begin{equation}\label{eqn:break-up-M}
M = (e_{11}+\cdots+e_{nn})M = e_{11}M  \oplus \cdots \oplus e_{nn} M.
\end{equation}
By (i) and the annihilation property of Fitting ideals over $R$, we have
$\Fit_{\Lambda}(M) = \Fit_{R}(e_{ii}M) \subset \Ann_{R}(e_{ii}M)$ for each $i$
and therefore $\Fit_{\Lambda}(M) \subset \Ann_{R}(M)$.

Equation \eqref{eqn:break-up-M} shows that $M \mapsto e_{11}M$ is an exact covariant functor from
the category of (left) $\Lambda$-modules to $R$-modules. (Note that this functor takes a
$\Lambda$-homomorphism $M \rightarrow N$ to its restriction $e_{11}M \rightarrow e_{11}N$
considered as an $R$-homomorphism.)
Furthermore, $e_{11}\Lambda \simeq R^{n}$ as $R$-modules,
so free (resp.\ finitely presented) $\Lambda$-modules map to free (resp.\ finitely presented) $R$-modules.
Therefore (iii)-(vii) follow from the corresponding properties for Fitting ideals over $R$.
Proofs of (iii) and (iv) in the case $\Lambda=R$ can be found in \cite[Chapter 3]{MR0460383};
for (vii) see  \cite[Corollary 20.5]{MR1322960}. Properties (v) and (vi) follow from
Lemma \ref{lem:short-exact-fitting} below.
Note that for (v), we first reduce to the case that $\iota$ is injective:
as $M_{1}$ surjects onto $\ker(M_{2} \onto M_{3})$ by exactness, we can assume by (iii)
that in fact $M_{1}=\ker(M_{2} \onto M_{3})$.
Property (viii) follows from equation \eqref{eqn:break-up-M},
Lemma \ref{lem:e_ii-isoms}, and (iv) in the case $\Lambda=R$.
The first part of (ix) is well-known; the second part now follows from
the $R$-module isomorphism $e_{11}(\Lambda/I) \simeq (R/J)^{n}$,
the fact that $\Fit_{R}(R/J)=J$ (see \cite[\S 3.1, Exercise 4]{MR0460383}; solution on p.93), and  parts (i) and (iv).
\end{proof}

\begin{example}
Let $n=2$ and $R=\Z$ so that $\Lambda = M_{2 \times 2}(\Z)$.
Consider $M = M_{2 \times 2}(\Z/2\Z)$ as a $\Lambda$-module.
Then $\Fit_{\Z}(M) = 16\Z$, $\Fit_{\Lambda}(M)=4\Z$, and $\Ann_{\Z}(M)=2\Z$.
Now let $N = Me_{11}$.
Then $\Fit_{\Z}(N) = 4\Z$ and $\Fit_{\Lambda}(N)=\Ann_{\Z}(N)=2\Z$.
\end{example}

\begin{remark}\label{rmk:morita-equiv}
The key fact we have used is that $R$ and $\Lambda$ are Morita equivalent rings
(for background on Morita equivalence see \cite[\S 3D]{MR632548}, \cite[Chapter 4]{MR1972204} or \cite[Chapter 7]{MR1653294}).
Let ${}_{R}\mathfrak{M}$ and ${}_{\Lambda}\mathfrak{M}$ denote the
categories of (left) $R$ modules and left $\Lambda$-modules, respectively. Fix $1 \leq i \leq n$.
Then we have mutually inverse category equivalences
\[
F : {}_{\Lambda}\mathfrak{M} \longrightarrow {}_{R}\mathfrak{M} \quad \textrm{ and } \quad
G : {}_{R}\mathfrak{M} \longrightarrow {}_{\Lambda}\mathfrak{M}
\]
given explicitly by
\begin{eqnarray}
\label{eqn:morita-isos}
F(M) &=&  e_{ii}\Lambda \otimes_{\Lambda} M \simeq  e_{ii}M \simeq \Hom_{\Lambda}(\Lambda e_{ii},M),\\
G(N) &=& \Lambda e_{ii} \otimes_{R} N \simeq \Hom_R (e_{ii} \Lambda, N). \notag
\end{eqnarray}
The $R$-module isomorphisms of \eqref{eqn:morita-isos} can be used to give definitions equivalent to
Definition \ref{def:fitt-matrix}.
In fact, in his PhD thesis \cite[\S 5.2]{grime_thesis}, Peter Grime essentially defines the Fitting ideal
of a $\Lambda$-module $M$ to be $\Fit_{R}(\Hom_{\Lambda}(\Lambda e_{11},M))$. However,
most of his results are quite different to those given here.
\end{remark}

\begin{remark}
We note that it is straightforward to extend Definition \ref{def:fitt-matrix} and parts (ii)-(ix) of 
Theorem \ref{thm:properties-matrix-over-comm}
to the case where $\Lambda$ is any ring that is Morita equivalent to a commutative ring $R$.
The advantages of the more specific case described in this section are that
it is very explicit, and thus is easier to understand and more results can be obtained.
Note that if $R$ is a ring over which every finitely generated projective module is in fact free
(for example, a principal ideal domain or a local ring) then we must have
$\Lambda  \simeq M_{n \times n}(R)$ for some $n$,
and so this case is covered by Definition \ref{def:fitt-matrix}.
In fact, from \S \ref{sec:non-comm-fitt} onwards we shall work over a ring $\Lambda$ 
whose centre $\zeta(\Lambda)$ is a product of local rings; we can
without loss of generality suppose that $\zeta(\Lambda)$ is in fact local.
Since $\Lambda$ is Morita equivalent to $R$, we have $\zeta(\Lambda) \simeq \zeta(R)=R$;
therefore $\Lambda \simeq M_{n \times n}(R)$ for some $n$.
Thus the more general case is not needed for this article.
\end{remark}

The following technical lemma is essentially equivalent to
\cite[Lemma 5.1]{grime_thesis}.

\begin{lemma}\label{lem:lambda-matrix-to-R}
Fix $1 \leq i \leq n$ and note that $\mathcal{B}_{i} := \{ e_{ij} \}_{1 \leq j \leq n}$ is an $R$-basis of $e_{ii}\Lambda$.
For any $r,s \in \N$ and any $\Lambda$-homomorphism
$\alpha : \Lambda^{r} \longrightarrow \Lambda^{s}$,
let $\alpha': (e_{ii}\Lambda)^{r} \longrightarrow (e_{ii}\Lambda)^{s}$
be the restriction of $\alpha$ considered as an $R$-homomorphism.
Let $h : \Lambda^{a} \longrightarrow \Lambda^{b}$ be a $\Lambda$-homomorphism
represented by $H \in M_{a \times b}(\Lambda)$ with respect to the standard basis.
Let $H' \in M_{na \times nb}(R)$ be the matrix representing $h'$ with respect to
the bases of $(e_{ii}\Lambda)^{a}$ and $(e_{ii}\Lambda)^{b}$ obtained
from $\mathcal{B}_{i}$ in the obvious way.
Let $\tilde{H} \in M_{na \times nb}(R)$ be the same matrix as $H$ but with entries considered
in $R$ rather than $\Lambda$. Then $H'=\tilde{H}$.
\end{lemma}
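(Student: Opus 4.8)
The plan is to unwind the bookkeeping carefully and check that, entry by entry, the matrix $H'$ produced by the Morita restriction coincides with $\tilde H$. Since everything reduces to understanding a single $\Lambda$-linear map $\Lambda \to \Lambda$ (the general case is obtained by running the argument on each of the $a$ coordinates of the source and each of the $b$ coordinates of the target independently), I would first treat the case $a=b=1$, so $h$ is given by right multiplication by a single element $H \in \Lambda = M_{n\times n}(R)$, say $H = (h_{kl})_{1\le k,l\le n}$ with $h_{kl}\in R$. The map $h': e_{ii}\Lambda \to e_{ii}\Lambda$ is then the restriction of $x \mapsto xH$.

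The key computation is to expand $e_{ij}H$ in the basis $\mathcal B_i = \{e_{i1},\dots,e_{in}\}$. Writing $H = \sum_{k,l} h_{kl}e_{kl}$ and using the multiplication rule $e_{ij}e_{kl} = \delta_{jk}e_{il}$, we get
\[
e_{ij}H \;=\; \sum_{k,l} h_{kl}\, e_{ij}e_{kl} \;=\; \sum_{l} h_{jl}\, e_{il}.
\]
Thus, with respect to the ordered basis $\mathcal B_i$, the coordinate vector of $e_{ij}H$ is the $j$-th row of $H$. This says precisely that the $R$-linear map $h'$ is represented, in the basis $\mathcal B_i$, by the matrix $(h_{jl})_{j,l}$ — which is literally $H$ with its entries read in $R$, i.e. $\tilde H$. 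So $H' = \tilde H$ in the case $a=b=1$.

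For general $a,b$, I would fix the convention that the basis of $(e_{ii}\Lambda)^a$ "obtained from $\mathcal B_i$ in the obvious way" is the concatenation $(e_{i1}^{(1)},\dots,e_{in}^{(1)},e_{i1}^{(2)},\dots,e_{in}^{(a)})$, where the superscript indicates the coordinate, and similarly for the target. Since $h$ acts blockwise — the $(p,q)$ block of $H$ is an element $H_{pq}\in\Lambda$ and $h$ sends the $q$-th coordinate into the $p$-th coordinate via right multiplication by $H_{pq}$ — the matrix $H'$ is the $a\times b$ block matrix whose $(p,q)$ block is the $n\times n$ matrix computed in the previous paragraph, namely $\tilde{H_{pq}}$. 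Assembling these blocks gives exactly $\tilde H$, the matrix $H$ with entries interpreted in $R$. I expect the only real obstacle to be notational: one must pin down the "obvious" identifications of bases and the block conventions so that the indices line up, and in particular check that the left-module action of $\Lambda$ on $\Lambda^b$ via matrices $H \in M_{a\times b}(\Lambda)$ acting on row vectors is compatible with the chosen ordering; once the conventions are fixed, the content is entirely the displayed identity $e_{ij}H = \sum_l h_{jl}e_{il}$.
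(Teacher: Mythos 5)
Your proof is correct and follows essentially the same route as the paper: reduce to $a=b=1$, then verify the key identity $e_{ij}H=\sum_{l}h_{jl}e_{il}$ to read off $H'=\tilde H$. The only cosmetic difference is in how the reduction is packaged — the paper does it up front via the canonical injections $\iota_k$ and projections $\pi_\ell$ together with the observation that $(\,\cdot\,)'$ commutes with composition, whereas you verify the $1\times1$ case first and then assemble the general case blockwise at the end; these are interchangeable.
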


\begin{proof}
Fix $1 \leq k \leq a$ and $1 \leq \ell \leq b$.
Let $\iota_{k}: \Lambda \longrightarrow \Lambda^{a}$ be the obvious injection
and $\pi_{\ell}: \Lambda^{b} \longrightarrow \Lambda$ be the obvious projection.
Then  $\iota_{k}'$ (resp. $\pi_{\ell}'$) is also the obvious injection (resp. projection).
Let $h_{k \ell} = \pi_{\ell} \circ h \circ \iota_{k} : \Lambda \longrightarrow \Lambda$.
Then $h_{k \ell}' = \pi_{\ell}' \circ h' \circ \iota_{k}'$.
Hence we can and do assume without loss of generality that $a=b=1$.

Write $\tilde{H} = (r_{pq}) \in M_{n \times n}(R)=\Lambda$.
Then for $1 \leq j \leq n$ we have
\[
h'(e_{ij}) = e_{ij} H = e_{ij} \sum_{p,q=1}^{n} e_{pq} r_{pq}
= \sum_{p,q=1}^{n} e_{ij}e_{pq} r_{pq} = \sum_{q=1}^{n} e_{iq} r_{jq}.
\]
Hence $H'$ is the matrix $(r_{jq})_{j,q} = \tilde{H}$, as required.
\end{proof}

\begin{remark}
Lemma \ref{lem:lambda-matrix-to-R} can be used to give an alternative proof
of Theorem \ref{thm:properties-matrix-over-comm} (i).
\end{remark}

\begin{prop}\label{prop:matrix-ring-ideal-det}
Let $I$ be a finitely generated left ideal of $\Lambda$.
Then
\[
\Fit_{\Lambda}(\Lambda/I) = \langle \det(x) \mid x \in I \rangle_{R}.
\]
\end{prop}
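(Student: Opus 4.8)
The plan is to reduce the statement, via the Morita functor $M\mapsto e_{11}M$ and Lemma \ref{lem:lambda-matrix-to-R}, to a purely matrix-theoretic identity over the commutative ring $R$, and then to finish with the Cauchy--Binet formula.

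First I would fix a finite generating set $x_{1},\dots,x_{a}$ of the left ideal $I$ (taking $a\ge 1$), so that the left $\Lambda$-homomorphism $h\colon\Lambda^{a}\to\Lambda$, $(\lambda_{k})_{k}\mapsto\sum_{k}\lambda_{k}x_{k}$, has image $I$ and hence gives a finite presentation $\Lambda^{a}\stackrel{h}{\longrightarrow}\Lambda\onto\Lambda/I$. With respect to the standard bases, $h$ is represented by the column $H\in M_{a\times 1}(\Lambda)$ with entries $x_{1},\dots,x_{a}$. Applying the exact functor $M\mapsto e_{11}M$ of \eqref{eqn:break-up-M} together with Lemma \ref{lem:lambda-matrix-to-R} (with $i=1$), and identifying $e_{11}\Lambda\simeq R^{n}$ via the basis $\mathcal{B}_{1}$, one obtains $e_{11}(\Lambda/I)\simeq\coker\bigl(\tilde{H}\colon R^{na}\to R^{n}\bigr)$, where $\tilde{H}\in M_{na\times n}(R)$ is the matrix obtained by regarding each block $x_{k}$ as an element of $M_{n\times n}(R)$ and stacking the $a$ blocks vertically. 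Hence, directly from Definition \ref{def:fitt-matrix} and the definition of the classical Fitting ideal, $\Fit_{\Lambda}(\Lambda/I)=\Fit_{R}(e_{11}(\Lambda/I))$ is the $R$-ideal generated by all $n\times n$ minors of $\tilde{H}$ (note $na\ge n$).

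It then remains to identify this ideal of minors with $\langle\det(x)\mid x\in I\rangle_{R}$. The key observation is that $x\in I$ if and only if $x=Q\tilde{H}$ for some $Q\in M_{n\times na}(R)$: writing $Q=(\mu_{1}\mid\dots\mid\mu_{a})$ in blocks $\mu_{k}\in M_{n\times n}(R)=\Lambda$ gives $Q\tilde{H}=\sum_{k}\mu_{k}x_{k}$, which runs exactly over $I$. For one inclusion, given an $n$-element subset $S\subseteq\{1,\dots,na\}$ let $P_{S}\in M_{n\times na}(R)$ be the corresponding row-selection $0$--$1$ matrix, so that $P_{S}\tilde{H}$ is the $n\times n$ submatrix of $\tilde{H}$ on the rows indexed by $S$; then $\det(P_{S}\tilde{H})$, which is a typical $n\times n$ minor of $\tilde{H}$, equals $\det(x)$ for $x=P_{S}\tilde{H}\in I$. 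For the reverse inclusion, the Cauchy--Binet formula expresses $\det(Q\tilde{H})$, for any $Q\in M_{n\times na}(R)$, as an $R$-linear combination of the $n\times n$ minors of $\tilde{H}$; hence $\det(x)$ lies in the ideal of those minors for every $x\in I$. Combining this with the previous paragraph yields the claim.

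I do not expect a serious obstacle: the content is the bookkeeping needed to see that $\tilde{H}$ is precisely the vertical stack of the blocks $x_{k}$ and that row-selection matrices pick out minors, together with the Cauchy--Binet identity. The only points requiring a little care are the degenerate cases --- $I=0$ (take $a=1$, $x_{1}=0$: both sides are $0$ when $n\ge 1$) and $I=\Lambda$ (some $x\in I$ has $\det(x)$ a unit: both sides are $R$) --- but these are subsumed by the argument above.
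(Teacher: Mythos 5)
Your proof is correct and follows the paper's overall plan: reduce via Lemma \ref{lem:lambda-matrix-to-R} and the exact functor $M\mapsto e_{11}M$ to comparing the ideal of $n\times n$ minors of the stacked matrix $\tilde{H}$ with $\langle\det(x)\mid x\in I\rangle_{R}$. Your argument for the containment of the ideal of minors into $\langle\det(x)\rangle_{R}$ --- that each $n\times n$ submatrix $P_{S}\tilde{H}=\sum_{k}\mu_{k}x_{k}$ of $\tilde{H}$ actually lies in $I$ --- is the same as the paper's, merely phrased with block matrices instead of the idempotents $e_{ij}$. Where you genuinely diverge is in the reverse containment. The paper appends an arbitrary $x\in I$ to the fixed generating set and reads off $\det(x)$ directly as one of the $n\times n$ minors of the \emph{enlarged} presentation, relying once more on presentation-independence of the $R$-Fitting ideal. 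You instead keep the presentation fixed, write $x=Q\tilde{H}$, and invoke the Cauchy--Binet formula to expand $\det(Q\tilde{H})$ as an $R$-linear combination of $n\times n$ minors of $\tilde{H}$. Both are valid; your Cauchy--Binet route is a bit more explicitly computational and avoids a second appeal to presentation-independence, while the paper's is slightly slicker and shorter.
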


\begin{proof}
We adopt the notation and assume the result of Lemma \ref{lem:lambda-matrix-to-R}.
Let $\{ x_{1}, \ldots, x_{r-1} \}$ be a fixed set of generators of $I$ and let $x_{r}$ be an arbitrary element of $I$.
Then there exists a presentation of $\Lambda/I$ of the form
\[
\Lambda^{r} \stackrel{h}{\longrightarrow} \Lambda \onto \Lambda/I,
\]
where $H := (x_{1}, \ldots, x_{r})^{t}  \in M_{r \times 1}(\Lambda)$ is the matrix representing $h$.
Let $S$ denote the set of all $n \times n$ submatrices of $H'=\tilde{H}\in M_{nr \times n}(R)$.
Since $h'$ is an $R$-module presentation of $e_{11}(\Lambda/I)$
and $\Fit_{R}(e_{11}(\Lambda/I))$ is independent of the choice of presentation, we have
\[
\Fit_{\Lambda}(\Lambda/I) = \Fit_{R}(e_{11}(\Lambda/I)) = \langle \det(T) \mid T \in S \rangle.
\]
However, one of the elements of $S$ is equal to $x_{r}$,
and so we see that $\det(x_{r}) \in \Fit_{\Lambda}(\Lambda/I)$.
We therefore have $\langle \det(x) \mid x \in I \rangle_{R} \subset \Fit_{\Lambda}(\Lambda/I)$.

Now let $T \in S$. Fix $i$ with $1 \leq i \leq n$.
Then the $i$th row of $T$ is a row of $H'=\tilde{H}$, which in turn is the $j$th row
of $x_{k}$ for some $k,j$ with $1 \leq k \leq r$ and $1 \leq j \leq n$.
Hence $e_{ii}T = e_{ij}x_{k}$.
Since $x_{k} \in I$, $e_{ij} \in \Lambda$, and $I$ is a left ideal of $\Lambda$,
we thus have that $e_{ii}T \in I$.
Therefore $T=(e_{11}+\cdots+e_{nn})T = e_{11}T + \cdots + e_{nn}T \in I$,
and so $\Fit_{\Lambda}(\Lambda/I) \subset \langle \det(x) \mid x \in I \rangle_{R}$.
\end{proof}

\subsection{Auxiliary result on Fitting ideals over commutative rings}
Let $R$ be a commutative ring.
We provide a proof of the following result as the second part is slightly stronger than similar results
that the authors were able to locate in the literature.

\begin{lemma}\label{lem:short-exact-fitting}
Let $M_{1},M_{2}$ and $M_{3}$ be finitely presented $R$-modules.
\begin{enumerate}
\item If $M_{1} \stackrel{\iota}\hookrightarrow M_{2} \onto M_{3}$ is an exact sequence
then \[ \Fit_{R}(M_{1}) \cdot \Fit_{R}(M_{3}) \subset \Fit_{R}(M_{2}). \]
\item  If in addition $M_{3}$ has a quadratic presentation (i.e. of the form $R^{k} \rightarrow R^{k} \onto M_{3}$
for some $k \in \N)$ then in fact
\[
\Fit_{R}(M_{1}) \cdot \Fit_{R}(M_{3}) = \Fit_{R}(M_{2}).
\]
\end{enumerate}
\end{lemma}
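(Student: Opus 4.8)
The plan is to build a presentation of $M_2$ by splicing together presentations of $M_1$ and $M_3$, and then to compute the relevant minors of the resulting block matrix. First I would fix presentations $R^{a_1} \xrightarrow{h_1} R^{b_1} \onto M_1$ and $R^{a_3} \xrightarrow{h_3} R^{b_3} \onto M_3$. Using that $M_2 \onto M_3$ with kernel (a quotient of) $M_1$, a standard horseshoe-type argument produces a presentation of $M_2$ of the form
\[
R^{a_1 + a_3} \xrightarrow{h_2} R^{b_1 + b_3} \onto M_2,
\]
where the matrix of $h_2$ has block-triangular shape
\[
H_2 = \begin{pmatrix} H_1 & 0 \\ * & H_3 \end{pmatrix}
\]
for some lifting block $*$ (here $H_i$ denotes the matrix of $h_i$). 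The point is that the composite $R^{b_1} \hookrightarrow R^{b_1+b_3} \onto M_2$ hits exactly the image of $M_1$, and a chosen lift of the generators of $M_3$ to $M_2$ supplies the off-diagonal block.

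For part (i), I would then show $\Fit_R(M_1)\cdot\Fit_R(M_3)\subset \Fit_R(M_2)$ by exhibiting, for each $(b_1+b_3)\times(b_1+b_3)$ quantity, that a product of a $b_1\times b_1$ minor of $H_1$ with a $b_3\times b_3$ minor of $H_3$ arises (up to sign) as a maximal minor of $H_2$: choosing the first $b_1$ columns together with any $b_3$ columns from the $H_3$-block gives a block-triangular submatrix whose determinant factors as the desired product. Since $\Fit_R(M_2)$ is independent of the chosen presentation, this inclusion follows. I should also handle the degenerate case where one of the Fitting ideals is zero (i.e.\ $a_i < b_i$), which is immediate since then the left side is the zero ideal; and I would reduce to the case $\iota$ injective exactly as indicated in the proof of Theorem \ref{thm:properties-matrix-over-comm}(v) — though here $\iota$ is already assumed injective.

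For part (ii), the extra hypothesis is that $M_3$ admits a quadratic presentation, so I may take $a_3 = b_3 = k$ and $H_3 \in M_{k\times k}(R)$. Now I would prove the reverse inclusion $\Fit_R(M_2)\subset \Fit_R(M_1)\cdot\Fit_R(M_3)$ by examining an arbitrary maximal minor of $H_2$: pick any $b_1 + k$ rows of $H_2$ and expand the determinant using the block-triangular structure — the last $k$ columns only receive contributions from rows in the $H_3$-block (the top-right block is zero), so a generalized Laplace/Cauchy–Binet expansion along those columns shows every such minor is a sum of terms, each a maximal minor of $H_1$ times $\det$ of some $k\times k$ submatrix obtained from rows of the lower block. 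Since $H_3$ is already $k\times k$, the only $k\times k$ "submatrix" from its column-span with a nonzero determinant comes from $\det(H_3)$ itself (after accounting for the rows chosen), so each term lies in $\Fit_R(M_1)\cdot(\det H_3) \subset \Fit_R(M_1)\cdot\Fit_R(M_3)$. Combined with part (i), this gives equality.

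The main obstacle I expect is bookkeeping in the generalized Laplace expansion of part (ii): one must argue carefully that, because the top-right block of $H_2$ vanishes and the $H_3$-block is exactly square, every nonvanishing contribution genuinely isolates the full factor $\det(H_3)$ rather than some smaller minor of it — in other words that the quadratic-presentation hypothesis is precisely what forces the term $\Fit_R(M_1)\cdot\Fit_R(M_3)$ and not a larger ideal. Getting the index ranges and signs straight, and confirming independence of presentation is being used legitimately at each step, is where the care is needed; the underlying linear algebra is routine.
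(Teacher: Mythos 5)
Your proposal is correct and takes essentially the same approach as the paper: construct the block-triangular presentation of $M_2$ via a horseshoe argument, read off part (i) from the block-triangular minors, and for part (ii) use the quadratic hypothesis $a_3=b_3$ to show every maximal minor of $h_2$ is either $\det(H_1)\det(h_3)$ or zero. The only cosmetic difference is that you invoke a Laplace/Cauchy--Binet expansion for part (ii), whereas the paper carries out the equivalent (and slightly cleaner) case split on whether all $a_3$ bottom rows are retained or at least one is deleted, in the latter case noting the resulting block $A$ has a zero column.
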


\begin{remark}
Lemma \ref{lem:short-exact-fitting} (i) is well-known
(see \cite[Exercise 2, Chapter 3]{MR0460383}; solution on p.90-91).
Proofs of slightly weaker versions of Lemma \ref{lem:short-exact-fitting} (ii) can be found in
\cite[p.80-81]{MR0460383} or \cite[Lemma 3]{MR1658000}); these assume that
$M_{3}$ has a presentation of the form $R^{k} \stackrel{h}{\rightarrow} R^{k} \onto M_{3}$
with $h$ injective, whereas
Lemma \ref{lem:short-exact-fitting} (ii) does not  require $h$ to be injective.
\end{remark}

\begin{proof}
We choose presentations $R^{a_{i}} \stackrel{h_{i}}\longrightarrow R^{b_{i}} \stackrel{\pi_{i}}{\onto} M_{i}$ for $i=1,3$
and construct a finite presentation of $M_{2}$ in the following way. Since $R^{b_{3}}$ is projective, $\pi_{3}$ factors through $M_{2}$ via a map $f_{1}:R^{b_{3}} \rightarrow M_{2}$.
We define $\pi_{2}=(\iota \circ \pi_{1} \mid f_{1}) : R^{b_{1}} \oplus R^{b_{2}} \onto M_{2}$.
In a similar manner we construct
$h_{2}=	(h_{1} \mid f_{2}):R^{a_{1}} \oplus R^{a_{3}} \rightarrow R^{b_{1}} \oplus R^{b_{3}}$, where $f_{2}$ realises the factorisation of $h_{3}$ through $\ker(\pi_{2})$.
Let $a_{2}=a_{1}+a_{3}$ and $b_{2}=b_{1}+b_{3}$.
We identify each $h_{i}$ with multiplication on the right  by a matrix in $M_{a_{i} \times b_{i}}(R)$ in the obvious way.
Then $h_{2}$ is of the form
\[
\left(\begin{array}{cc} h_{1} & 0 \\ \ast & h_{3} \end{array}\right).
\]
Since Fitting ideals over $R$ are independent of the chosen presentation, this gives the desired inclusion of part (i).

Now suppose that $M_{3}$ has a quadratic presentation; then we can choose $a_{3}=b_{3}$.
Without loss of generality we can assume that $a_{1} \geq b_{1}$ and so $a_{2} \geq b_{2}$.
Let $H_{2}$ be a $b_{2} \times b_{2}$ submatrix of $h_{2}$. Then $H_{2}$ is obtained from
$h_{2}$ by deleting rows. If none of the last $a_{3}$ rows are deleted, then $H_{2}$
is of the form
\[
\left(\begin{array}{cc} H_{1} & 0 \\ \ast & h_{3} \end{array}\right),
\]
where $H_{1}$ is some $b_{1} \times b_{1}$ submatrix of $h_{1}$.
Otherwise, $H_{2}$ is of the form
\[
\left(\begin{array}{cc} A & 0 \\ \ast & B \end{array}\right),
\]
where $A$ and $B$ are square matrices ($B$ is a submatrix of $h_{3}$)
and the last column of $A$ consists only of zeros; hence $\det(H_{2})=\det(A)\det(B)=0$.
In either case, we have the reverse of the inclusion of part (i) and thus have the desired equality of part (ii).
\end{proof}

\section{Noncommutative Fitting invariants}\label{sec:non-comm-fitt}

\subsection{Reduced norms}\label{subsec:nr}
Let $\mathfrak{o}$ be a noetherian integral domain with field of quotients $F$ and
let $A$ be a finite dimensional semisimple $F$-algebra. If $e_{1}, \ldots, e_{t}$ are the central primitive
idempotents of $A$ then
\[
A = A_{1} \oplus \cdots \oplus A_{t}
\]
where $A_{i}:=Ae_{i}=e_{i}A$.
Each $A_{i}$ is isomorphic to an algebra of $n_{i} \times n_{i}$ matrices over a skewfield $D_{i}$,
and $F_{i}:=\zeta(A_{i})=\zeta(D_{i})$ is a finite field extension of $F$; hence each $A_{i}$ is a central
simple $F_{i}$-algebra. We denote the Schur index of $D_{i}$ by $s_{i}$ so that $[D_{i}:F_{i}]=s_{i}^{2}$.
The reduced norm map
\[
\nr = \nr_{A}: A \longrightarrow \zeta(A)=F_{1} \oplus \cdots \oplus F_{t}
\]
is defined componentwise (see \cite[\S 9]{MR1972204})
and extends to matrix rings over $A$ in the obvious way; hence this induces
a map $K_{1}(A) \rightarrow \zeta(A)^{\times}$ which we also denote by $\nr$.

Now suppose further that $A$ is a separable $F$-algebra and that
$\mathfrak{o}$ is integrally closed. Let $\Lambda$ be an $\mathfrak{o}$-order in $A$.
Then $\Lambda$ is noetherian and so any finitely generated $\Lambda$-module
is in fact finitely presented; we shall use this fact repeatedly without further mention.
By \cite[Corollary (10.4)]{MR1972204} we may choose a maximal order $\Lambda'$
containing $\Lambda$ and there is a decomposition
\[
\Lambda' = \Lambda_{1}' \oplus \cdots \oplus \Lambda_{t}'
\]
where $\Lambda'_{i}=\Lambda' e_{i}$.
Let $\mathfrak{o}_{i}'$ be the integral closure of $\mathfrak{o}$ in $F_{i}$.
Then each $\Lambda_{i}'$ is a maximal $\mathfrak{o}_{i}'$-order with centre $\mathfrak{o}_{i}'$
(see \cite[Theorem (10.5)]{MR1972204}).
A key point is that the reduced norm maps $\Lambda$ into
$\zeta(\Lambda')=\mathfrak{o}_{1}' \oplus \cdots \oplus \mathfrak{o}_{t}'$, but not necessarily into
$\zeta(\Lambda)$.
As above, the reduced norm induces a map $K_{1}(\Lambda) \rightarrow \zeta(\Lambda')^{\times}$
which we again denote by $\nr$.

\begin{remark}\label{rmk:semilocal-nr}
Suppose that $\mathfrak{o}$ is local.
Then $\Lambda$ is semilocal and by \cite[Theorem (40.31)]{MR892316} the natural map
$\Lambda^{\times} \rightarrow K_{1}(\Lambda)$ is surjective.
Furthermore, the diagram
\[
\xymatrix@1@!0@=36pt {
\Lambda^{\times} \ar@{>}[d]_{\nr} \ar@{->}[rr] & & K_{1}(\Lambda) \ar@{>}[dll]^{\nr} \\
\zeta(A) & &
}
\]
commutes and therefore $\nr(\Lambda^{\times})=\nr(K_{1}(\Lambda))=\nr(\Gl_{n}(\Lambda))$ for all $n \in \N$.
\end{remark}

\subsection{Fitting domains and Fitting orders}\label{subsec:fitt-orders}
We shall now specialise to the following situation.
Let $\mathfrak{o}$ be an integrally closed complete commutative noetherian local domain
with field of quotients $F$. We shall refer to $\mathfrak{o}$ as a \emph{Fitting domain}.
For example, one can take $\mathfrak{o}$ to be a complete discrete valuation ring or a power series ring
in one variable over a complete discrete valuation ring.
Let $A$ be a separable $F$-algebra
(i.e.\ a finite dimensional $F$-algebra, such that for every extension field $E$ of $F$, including
$F$ itself, $E \otimes_{F} A$ is a semisimple $E$-algebra; see \cite[\S 7A]{MR632548})
and let $\Lambda$ be an $\mathfrak{o}$-order in $A$.
We shall refer to $\Lambda$ as a \emph{Fitting order} over $\mathfrak{o}$.
A standard example of $\Lambda$ is the group ring $\Z_{p}[G]$ where $p$ is a prime and $G$ is a finite group.

\begin{remark}\label{rmk:grp-ring-sep-alg}
Let $\mathfrak{o}$ be a Fitting domain with field of quotients $F$ and let $G$ be a finite group;
then any $\mathfrak{o}$-order $\Lambda$ in $A:=F[G]$
is a Fitting order if and only if $|G|$ is invertible in $F$.
\end{remark}

\subsection{Reduced norm equivalence}\label{subsec:nre}
We recall the following definition from \cite[\S 1.0.2]{MR2609173}.
Let $N$ and $M$ be two $\zeta(\Lambda)$-submodules of
an $\mathfrak{o}$-torsionfree $\zeta(\Lambda)$-module.
Then $N$ and $M$ are called {\it $\nr(\Lambda)$-equivalent} if
there exists an integer $n$ and a matrix $U \in \Gl_n(\Lambda)$
such that $N = \nr(U) \cdot M$.
(Note that by Remark \ref{rmk:semilocal-nr}, we can in fact replace $\Gl_{n}(\Lambda)$ by $\Lambda^{\times}$
in this definition.)
We say that $N$ is
$\nr(\Lambda)$-contained in $M$ (and write $[N]_{\nr(\Lambda)} \subset [M]_{\nr(\Lambda)}$)
if for all $N' \in [N]_{\nr(\Lambda)}$ there exists $M' \in [M]_{\nr(\Lambda)}$
such that $N' \subset M'$. Note that it suffices to check this property for one $N_0 \in [N]_{\nr(\Lambda)}$.
We will say that $x$ is contained in $[N]_{\nr(\Lambda)}$ (and write $x \in [N]_{\nr(\Lambda)}$) if there is $N_{0} \in [N]_{\nr(\Lambda)}$
such that $x \in N_{0}$.

Let $e \in A$ be a central idempotent.
Suppose that $N$ and $M$ are two $\mathfrak{o}$-torsionfree
$\zeta(\Lambda)$-modules that are $\nr(\Lambda)$-equivalent.
Then $eN$ and $eM$ are $\nr(\Lambda e)$-equivalent $\zeta(\Lambda e)$-modules,
since for $U \in \Lambda^{\times}$ we have $Ue \in (\Lambda e)^{\times}$ and
$\nr_{A}(U) e =\nr_{Ae}(Ue)$. Hence $e[N]_{\nr(\Lambda)} := [eN]_{\nr(\Lambda e)}$
is well-defined.

\subsection{Noncommutative Fitting invariants}\label{subsec:non-comm-fitt}
We recall the following definitions and results from
\cite{MR2609173} and \cite[\S 1.0.3]{nonabstark}.
Let $M$ be a $\Lambda$-module with finite presentation
\begin{equation}\label{eqn:finite_representation}
        \Lambda^{a} \stackrel{h}{\longrightarrow} \Lambda^{b} \onto M.
\end{equation}
We identify the homomorphism $h$ with the corresponding matrix in $M_{a \times b}(\Lambda)$ and define
$S_{b}(h)$ to be the set of all $b \times b$ submatrices of $h$ if $a \geq b$. In the case $a=b$
we call \eqref{eqn:finite_representation} a quadratic presentation.
The Fitting invariant of $h$ over $\Lambda$ is defined to be
\begin{equation}\label{eqn:fitt-def}
\Fitt_{\Lambda}(h) =
\left\{ \begin{array}{lll} [0]_{\nr(\Lambda)} & \mbox{ if } & a<b \\
\left[\langle \nr(H) \mid H \in S_{b}(h)\rangle_{\zeta(\Lambda)}\right]_{\nr(\Lambda)} & \mbox{ if } & a \geq b.
\end{array}\right.
\end{equation}
We call $\Fitt_{\Lambda}(h)$ a Fitting invariant of $M$ over $\Lambda$.
If $M$ admits a quadratic presentation $h$ we put $\Fitt_{\Lambda}(M) := \Fitt_{\Lambda}(h)$,
which can be shown to be independent of the chosen quadratic presentation
(see \cite[Theorem 3.2]{MR2609173}).
We define $\Fitt_{\Lambda}^{\max}(M)$ to be the unique
Fitting invariant of $M$ over $\Lambda$ which is maximal
among all Fitting invariants of $M$ with respect to
the partial order ``$\subset$''. Finally, we define a $\zeta(\Lambda)$-submodule of $\zeta(A)$ by
\[
\mathcal{I} = \mathcal{I}(\Lambda) := \langle \nr(H) \mid H \in M_{b \times b}(\Lambda), b \in \N  \rangle_{\zeta(\Lambda)}
\]
and note that this is in fact an $\mathfrak{o}$-order in $\zeta(A)$ contained in $\zeta(\Lambda')$.

\begin{theorem}\label{thm:fitt-thm}
Let $M,M_{1}, M_{2}$ and $M_{3}$ be finitely generated $\Lambda$-modules.
\begin{enumerate}
\item If $M_{1} \twoheadrightarrow M_{2}$ is an epimorphism then
$\Fitt_{\Lambda}^{\max}(M_{1}) \subset \Fitt_{\Lambda}^{\max}(M_{2})$.
\item If $M_{1} \rightarrow M_{2} \twoheadrightarrow M_{3}$ is an exact sequence, then
\[
\Fitt_{\Lambda}^{\max}(M_{1}) \cdot \Fitt_{\Lambda}^{\max}(M_{3}) \subset \Fitt_{\Lambda}^{\max}(M_{2}).
\]
\item Let $M_{1} \hookrightarrow M_{2} \twoheadrightarrow M_{3}$ be an exact sequence.
If $M_{1}$ and $M_{3}$ admit quadratic presentations, so does $M_{2}$ and
\[
\Fitt_{\Lambda}(M_{1}) \cdot \Fitt_{\Lambda}(M_{3}) = \Fitt_{\Lambda}(M_{2}).
\]
\item If $\theta \in \Fitt_{\Lambda}^{\max}(M)$ and $\lambda \in \mathcal{I}$ then
$\lambda \cdot \theta \in \Fitt_{\Lambda}^{\max}(M)$.
\item If $M$ admits a quadratic presentation, then $\Fitt_{\Lambda}^{\max}(M) =  \mathcal{I} \cdot \Fitt_{\Lambda}(M)$.
\item Let $e \in A$ be a central idempotent. Then
$e \Fitt_{\Lambda}^{\max}(M) = \Fitt_{\Lambda e}^{\max}(\Lambda e \otimes_{\Lambda} M)$.
\item Set $M_{F} :=  F \otimes_{\mathfrak{o}} M$ and
$\Upsilon(M) := \{ i \in \{1, \ldots, t \} \mid e_{i}M_{F} =0 \}$.
Then
\[
\Fitt_{\Lambda}^{\max}(M) = e \Fitt_{\Lambda}^{\max}(M) = \Fitt_{\Lambda e}^{\max}(\Lambda e \otimes_{\Lambda} M)
\]
where $e=e(M) := \sum_{i \in \Upsilon(M)} e_{i}$.
\end{enumerate}
\end{theorem}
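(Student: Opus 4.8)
\emph{(The statement collects results of \cite{MR2609173} and \cite[\S 1.0.3]{nonabstark}; below I indicate how I would prove each part.)}
The plan is to deduce every part by choosing a convenient finite presentation — for the ``maximal'' statements a presentation $h$ with $\Fitt_{\Lambda}(h)=\Fitt_{\Lambda}^{\max}(M)$, which exists by the very definition of $\Fitt_{\Lambda}^{\max}$ — and then exploiting two facts about reduced norms: their multiplicativity, and that the reduced norm of a block lower-triangular square matrix over $A$ is the product of the reduced norms of its diagonal blocks (checked componentwise over each simple factor $A_{i}=M_{n_{i}}(D_{i})$, where it reduces to the analogous statement for $\Nrd$ over the division ring $D_{i}$). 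Throughout I would use freely that $N\subseteq N'$ as $\zeta(\Lambda)$-modules gives $[N]_{\nr(\Lambda)}\subseteq[N']_{\nr(\Lambda)}$, that such containments multiply, that $\mathcal I\supseteq\zeta(\Lambda)$ (since the $1\times 1$ identity has reduced norm $1$), and that $\Lambda$ is noetherian, so kernels of maps from finitely generated modules are finitely generated.

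For (i): fix $\Lambda^{a}\xrightarrow{h_{1}}\Lambda^{b}\twoheadrightarrow M_{1}$ realising $\Fitt_{\Lambda}^{\max}(M_{1})$ (if $a<b$ both sides are $[0]$). Composing with $M_{1}\twoheadrightarrow M_{2}$ and writing the kernel of $\Lambda^{b}\twoheadrightarrow M_{2}$ as $\im(h_{1})+\langle y_{1},\dots,y_{c}\rangle$ gives a presentation $(h_{1}\mid y_{1},\dots,y_{c})$ of $M_{2}$ with $S_{b}(h_{1})\subseteq S_{b}(h_{1}\mid y)$, whence $\Fitt_{\Lambda}^{\max}(M_{1})\subseteq\Fitt_{\Lambda}(h_{1}\mid y)\subseteq\Fitt_{\Lambda}^{\max}(M_{2})$. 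For (ii): exactness at $M_{2}$ gives $M_{1}\twoheadrightarrow K:=\ker(M_{2}\twoheadrightarrow M_{3})$, so by (i) it suffices to treat the short exact sequence $K\hookrightarrow M_{2}\twoheadrightarrow M_{3}$; choosing presentations $h_{1},h_{3}$ of $K,M_{3}$ realising the maxima and building the presentation $h_{2}=\left(\begin{smallmatrix} h_{1} & 0\\ \ast & h_{3}\end{smallmatrix}\right)$ of $M_{2}$ exactly as in the proof of Lemma \ref{lem:short-exact-fitting} (that construction is purely ring-theoretic), every pair in $S_{b_{1}}(h_{1})\times S_{b_{3}}(h_{3})$ yields a block lower-triangular member of $S_{b_{2}}(h_{2})$ of reduced norm $\nr(H_{1})\nr(H_{3})$, giving $\Fitt_{\Lambda}^{\max}(K)\cdot\Fitt_{\Lambda}^{\max}(M_{3})\subseteq\Fitt_{\Lambda}(h_{2})\subseteq\Fitt_{\Lambda}^{\max}(M_{2})$. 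Part (iii) is the same construction with $h_{1},h_{3}$ quadratic, which makes $h_{2}$ quadratic, so $S_{b_{2}}(h_{2})=\{h_{2}\}$ and $\Fitt_{\Lambda}(M_{2})=[\langle\nr(h_{1})\nr(h_{3})\rangle]=\Fitt_{\Lambda}(M_{1})\cdot\Fitt_{\Lambda}(M_{3})$, using independence of the quadratic presentation (\cite[Theorem 3.2]{MR2609173}).

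For (iv), and the inclusion $\mathcal I\cdot\Fitt_{\Lambda}(M)\subseteq\Fitt_{\Lambda}^{\max}(M)$ in (v), I would use a stabilisation trick: given $h:\Lambda^{a}\to\Lambda^{b}\twoheadrightarrow M$ (WLOG $a\ge b$) and square matrices $H_{1},\dots,H_{m}\in M_{d\times d}(\Lambda)$ (padded with identity blocks to a common size), the block matrix
\[
h'=\begin{pmatrix} h & 0\\ 0 & I_{d}\\ 0 & H_{1}\\ \vdots & \vdots\\ 0 & H_{m}\end{pmatrix}
\]
still presents $M$, and a dimension count shows that a member of $S_{b+d}(h')$ has nonzero reduced norm only if it is block lower-triangular of the form $\left(\begin{smallmatrix} H & 0\\ 0 & C\end{smallmatrix}\right)$ with $H\in S_{b}(h)$ and $C$ a $d\times d$ submatrix of the stacked $H_{j}$, of reduced norm $\nr(H)\nr(C)\in\nr(H)\cdot\mathcal I$; conversely $C=I_{d}$ and $C=H_{j}$ recover $\nr(H)$ and $\nr(H)\nr(H_{j})$. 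Taking $h$ to realise $\Fitt_{\Lambda}^{\max}(M)$ and letting the $H_{j}$ run over a finite set of $\zeta(\Lambda)$-generators of $\mathcal I$ yields both (iv) and $\Fitt_{\Lambda}(h')=[\mathcal I\cdot\Fitt_{\Lambda}(M)]$, hence $[\mathcal I\cdot\Fitt_{\Lambda}(M)]\subseteq\Fitt_{\Lambda}^{\max}(M)$. The reverse inclusion $\Fitt_{\Lambda}^{\max}(M)\subseteq[\mathcal I\cdot\Fitt_{\Lambda}(M)]$ of (v) is where I expect the real work: for an arbitrary presentation $g$ of $M$ one must show every $b\times b$ minor contributing to $\Fitt_{\Lambda}(g)$ lies in $\mathcal I\cdot\langle\nr(h)\rangle$. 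I would compare $g$ with the quadratic presentation $h\oplus\id$ of $M$ via Schanuel's lemma: projectivity produces a matrix $\alpha$ over $\Lambda$ and a presentation $G$ of $M$ built from $g$ with $G=\alpha\cdot(h\oplus\id)$, so each square submatrix of $G$ factors as (submatrix of $\alpha$)$\cdot(h\oplus\id)$ and has reduced norm in $\mathcal I\cdot\nr(h)$; the bookkeeping showing $g$ and $G$ have the same Fitting invariant up to $\nr(\Lambda)$-equivalence is the technical heart.

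Finally, for (vi): applying the right-exact functor $\Lambda e\otimes_{\Lambda}(-)$ to a presentation $h$ of $M$ yields a presentation of $\Lambda e\otimes_{\Lambda}M$ over $\Lambda e$ whose matrix is $h$ with each entry passed to $\Lambda e$; since $\nr_{Ae}(He)=e\,\nr_{A}(H)$ (the computation of \S\ref{subsec:nre}, valid for arbitrary square matrices as $\nr$ is polynomial) and $Ue\in(\Lambda e)^{\times}$ for $U\in\Lambda^{\times}$, we get $e\,\Fitt_{\Lambda}(h)=\Fitt_{\Lambda e}(he)$, hence $e\,\Fitt_{\Lambda}^{\max}(M)\subseteq\Fitt_{\Lambda e}^{\max}(\Lambda e\otimes_{\Lambda}M)$ by taking $h$ maximal; the reverse inclusion follows by lifting an arbitrary presentation of $\Lambda e\otimes_{\Lambda}M$ to one of $M$ (using the ring homomorphism $\Lambda\to\Lambda e$), in the spirit of the hard direction of (v). For (vii), the second equality is (vi) for the central idempotent $e=e(M)$, and for the first equality I would show $e_{i}\,\Fitt_{\Lambda}^{\max}(M)=[0]$ whenever $i\notin\Upsilon(M)$: if $e_{i}M_{F}\neq 0$ then for any presentation $h:\Lambda^{a}\to\Lambda^{b}\twoheadrightarrow M$ with $a\ge b$, tensoring with $F$ shows that the $i$-component of each $H\in S_{b}(h)$ induces a non-surjective, hence non-invertible, endomorphism of $A_{i}^{b}$, so $e_{i}\,\nr(H)=\Nrd_{A_{i}}(e_{i}H)=0$; summing over $i\notin\Upsilon(M)$ gives $(1-e)\Fitt_{\Lambda}^{\max}(M)=[0]$, i.e.\ $\Fitt_{\Lambda}^{\max}(M)=e\,\Fitt_{\Lambda}^{\max}(M)$.
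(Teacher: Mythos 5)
The paper does not prove this theorem at all: its ``proof'' is three citations to \cite[Proposition 3.5]{MR2609173}, \cite[Proposition 1.1]{nonabstark} and \cite[Lemma 3.4]{MR2609173}, so there is no in-paper argument to compare against. Your reconstruction of (i)--(iv), (vii), and the forward halves of (v) and (vi) is essentially sound: block-lower-triangular manipulations of presentations, multiplicativity of $\nr$ across diagonal blocks, the observation that a submatrix with the wrong row split is singular, and the $e_{i}$-component argument for (vii) are exactly the right tools. Two points deserve correction, though.

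First, in the stabilisation argument you write ``Taking $h$ to realise $\Fitt_{\Lambda}^{\max}(M)$ \dots\ yields \dots\ $\Fitt_{\Lambda}(h')=[\mathcal I\cdot\Fitt_{\Lambda}(M)]$''; with that choice of $h$ you actually produce a representative of $\mathcal I\cdot\Fitt_{\Lambda}^{\max}(M)$, not of $\mathcal I\cdot\Fitt_{\Lambda}(M)$. For the containment $\mathcal I\cdot\Fitt_{\Lambda}(M)\subseteq\Fitt_{\Lambda}^{\max}(M)$ in (v) you should run the construction with $h$ the quadratic presentation, so that $S_{b}(h)=\{h\}$ and $\Fitt_{\Lambda}(h')=[\nr(h)\cdot\mathcal I]$ is precisely a representative of $\mathcal I\cdot\Fitt_{\Lambda}(M)$, and then invoke maximality of $\Fitt_{\Lambda}^{\max}(M)$. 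Part (iv) itself is fine as you argued it.

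Second, your route for the reverse containment in (vi) --- ``lifting an arbitrary presentation of $\Lambda e\otimes_{\Lambda}M$ to one of $M$ using the ring homomorphism $\Lambda\to\Lambda e$, in the spirit of the hard direction of (v)'' --- is not correct and overestimates the difficulty. A presentation over a quotient ring does not lift to a presentation of the same $\Lambda$-module, and no Schanuel-type argument is needed. Since $e$ is a central idempotent, $\Lambda\simeq\Lambda e\times\Lambda(1-e)$ as rings and $M\simeq eM\oplus(1-e)M$ as $\Lambda$-modules, with $\Lambda e\otimes_{\Lambda}M\simeq eM$. Given any $\Lambda e$-presentation $g$ of $eM$, choose any $\Lambda(1-e)$-presentation $g'$ of $(1-e)M$, pad both by identity rows and columns to a common size (which changes neither Fitting invariant, as you in effect verify elsewhere), and form the $\Lambda$-presentation $g\oplus g'$ of $M$. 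Your forward computation applied to $g\oplus g'$ then gives $\Fitt_{\Lambda e}(g)=e\,\Fitt_{\Lambda}(g\oplus g')\subseteq e\,\Fitt_{\Lambda}^{\max}(M)$, and since $g$ was arbitrary this is the desired inclusion. The genuinely hard step that remains is the reverse inclusion in (v), which you flag honestly; the Schanuel-lemma sketch is the right idea but would need to be carried out in detail to count as a proof.
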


\begin{proof}
For (i), (ii) and (iii), see \cite[Proposition 3.5]{MR2609173}.
For (iv) and (v) see \cite[Proposition 1.1]{nonabstark}.
For (vi) and (vii) see \cite[Lemma 3.4]{MR2609173}.
\end{proof}

\subsection{An alternative definition of noncommutative Fitting invariants}\label{subsec:alt-def}
We define
\[
\mathcal{U} = \mathcal{U}(\Lambda) := \langle \nr(H) \mid H \in \Gl_{b}(\Lambda), b \in \N  \rangle_{\zeta(\Lambda)} = \langle \nr(H) \mid H \in \Lambda^{\times}  \rangle_{\zeta(\Lambda)},
\]
where the last equality is due to Remark \ref{rmk:semilocal-nr}.
This is an $\mathfrak{o}$-order in $\zeta(A)$ contained in $\mathcal{I}(\Lambda)$.
Let $M$ be a $\Lambda$-module with finite presentation
\[
\Lambda^{a} \stackrel{h}{\longrightarrow} \Lambda^{b} \onto M.
\]
An alternative definition to \eqref{eqn:fitt-def} is
\begin{equation}\label{eqn:fit-def}
\Fit_{\Lambda}(h) =
\left\{ \begin{array}{lll} \langle 0 \rangle_{\mathcal{U}(\Lambda)} & \mbox{ if } & a<b \\
\langle \nr(H) \mid H \in S_{b}(h)\rangle_{\mathcal{U}(\Lambda)} & \mbox{ if } & a \geq b.
\end{array}\right.
\end{equation}
(Note that $\Fitt_{\Lambda}(h)$ of \eqref{eqn:fitt-def} has two t's whereas $\Fit_{\Lambda}(h)$ of \eqref{eqn:fit-def}
has one t.)
We define $\Fit_{\Lambda}^{\max}(M)$ to be the unique
Fitting invariant of $M$ over $\Lambda$ which is maximal with respect to
inclusion among all $\Fit_{\Lambda}(h')$ where $h'$ is a presentation of $M$.
An argument analogous to that given for Theorem \ref{thm:fitt-thm}(iv) shows that
$\Fit_{\Lambda}^{\max}(M)$ is in fact a module over $\mathcal{I}(\Lambda)$.

The two definitions are explicitly related as follows.
Consider the category $\mathcal{N}$ with $\nr(\Lambda)$-equivalence classes of finitely generated
$\zeta(\Lambda)$-submodules of $\zeta(A)$ as objects and inclusions as morphisms.
Let $\mathcal{M}$ be the category of finitely generated $\mathcal{I}(\Lambda)$-submodules
of $\zeta(A)$ with inclusions as morphisms. Then
\begin{eqnarray} \label{eqn:iota-functor}
    \iota: \mathcal{N} & \longrightarrow & \mathcal{M} \\
    {[X]_{\nr(\Lambda)}} & {\mapsto} & {X \cdot \mathcal{I}(\Lambda)} \nonumber
\end{eqnarray}
is a covariant functor. Note that $\iota$ is well-defined:
If $X'$ is $\nr(\Lambda)$-equivalent to $X$, then there is  a
$U \in \Lambda^{\times}$ such that $X' = \nr(U) \cdot X$;
but $\nr(U) \in \mathcal{I}(\Lambda)^{\times}$ and hence
$X' \cdot \mathcal{I}(\Lambda) = X \cdot \mathcal{I}(\Lambda)$.
In the special case $\zeta(\Lambda)=\mathcal{I}(\Lambda)$ (e.g.\ $\Lambda$ is commutative or maximal),
the equivalence class
$[X]_{\nr(\Lambda)}$ contains precisely one element and we have $\iota([X]_{\nr(\Lambda)})=X$.
In the general case, it is straightforward to see that we have
\begin{equation} \label{eqn:Fitt-vs-Fit}
\iota(\Fitt^{\max}_{\Lambda}(M)) = \Fit^{\max}_{\Lambda}(M).
\end{equation}
It follows that $\Fit_{\Lambda}^{\max}(M)$ has the
properties analogous to those of $\Fitt_{\Lambda}^{\max}(M)$ given in Theorem \ref{thm:fitt-thm}.

The advantage of $\Fit^{\max}_{\Lambda}(M)$ is that $\nr(\Lambda)$-equivalence classes
are not required and, as we shall see, it is compatible with Definition \ref{def:fitt-matrix};
the advantage of $\Fitt^{\max}_{\Lambda}(M)$ is that it can be directly related to Fitting invariants of quadratic presentations
which in turn can be used to do computations in relative $K$-groups. For instance, the application in \cite[{\S}7]{MR2609173} shows
how to compute annihilators of the class group of a number field via this notion
of Fitting invariants
from an appropriate special case of the equivariant Tamagawa number conjecture (which asserts a certain equality in a relative $K$-group).
Moreover, it can be used to define relative Fitting invariants (see \cite[p.2764]{MR2609173}).
However, in most cases
it does not really matter which definition we work with, as they are explicitly related as above.
For the rest of this article, the reader may almost always think in terms of
$\Fit^{\max}_{\Lambda}(M)$ rather than $\Fitt_{\Lambda}^{\max}(M)$.

\subsection{Generalised adjoint matrices}\label{subsec:adjoints}
Choose $n \in \N$ and let $H \in M_{n \times n}(\Lambda)$.
Then recalling the notation of \S \ref{subsec:nr},
decompose $H$ into
\[
H = \sum_{i=1}^{t} H_{i} \in M_{n \times n}(\Lambda') = \bigoplus_{i=1}^t  M_{n \times n}(\Lambda'_{i}),
\]
where $H_{i}:=He_{i}$.
Let $m_{i} = n_{i} \cdot s_{i} \cdot n$.
The reduced characteristic polynomial $f_{i}(X) = \sum_{j=0}^{m_{i}} \alpha_{ij}X^{j}$ of $H_{i}$
has coefficients in $\mathfrak{o}_{i}'$.
Moreover, the constant term $\alpha_{i0}$ is equal to $\nr(H_{i}) \cdot (-1)^{m_{i}}$.
We put
\[
H_{i}^{\ast} := (-1)^{m_{i}+1} \cdot \sum_{j=1}^{m_i} \alpha_{ij}H_{i}^{j-1}, \quad H^{\ast} := \sum_{i=1}^{t} H_{i}^{\ast}.
\]

\begin{lemma}\label{lem:ast}
We have $H^{\ast} \in M_{n\times n} (\Lambda')$ and $H^{\ast} H = H H^{\ast} = \nr_A(H) \cdot 1_{n \times n}$.
\end{lemma}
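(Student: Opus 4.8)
The plan is to reduce to the case of a single simple component $A_i$, where everything becomes a computation with the reduced characteristic polynomial. Since $H = \sum_{i=1}^t H_i$ with $H_i = He_i$ and the $e_i$ are orthogonal central idempotents of $A$, we have $H^\ast H = \sum_i H_i^\ast H_i$ and $HH^\ast = \sum_i H_i H_i^\ast$, and likewise $\nr_A(H)\cdot 1_{n\times n} = \sum_i \nr(H_i)\cdot 1_{n\times n}$ (the component of $\nr_A(H)$ in $F_i$ being $\nr(H_i)$). Also, since $H_i^\ast$ is by definition a polynomial in $H_i$ with coefficients $\alpha_{ij} \in \mathfrak{o}_i' = \zeta(\Lambda_i') \subset \Lambda_i'$, it lies in $M_{n\times n}(\Lambda_i')$; summing over $i$ gives $H^\ast \in M_{n\times n}(\Lambda')$, which is the first assertion. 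Moreover $H_i^\ast$ commutes with $H_i$, so $H_i^\ast H_i = H_i H_i^\ast$ and it suffices to prove $H_i^\ast H_i = \nr(H_i)\cdot 1_{n\times n}$ for each $i$.

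Next I would invoke the Cayley--Hamilton theorem for the matrix $H_i \in M_{n\times n}(\Lambda_i')$. Here the relevant polynomial is the reduced characteristic polynomial $f_i(X) = \sum_{j=0}^{m_i}\alpha_{ij}X^j$, which by definition is the characteristic polynomial of $H_i$ regarded as acting on a suitable $F_i$-vector space (of dimension $m_i = n_i s_i n$, coming from the identification $M_{n\times n}(A_i) \cong M_{n\times n}(M_{n_i}(D_i))$ and a splitting of $D_i$); the statement that it has coefficients in $\mathfrak{o}_i'$ and that $\alpha_{i0} = (-1)^{m_i}\nr(H_i)$ is recorded in the excerpt. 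Cayley--Hamilton gives $\sum_{j=0}^{m_i}\alpha_{ij}H_i^j = 0$ in $M_{n\times n}(\Lambda_i')$, i.e.
\[
\alpha_{i0}\cdot 1_{n\times n} = -\sum_{j=1}^{m_i}\alpha_{ij}H_i^j = -\Big(\sum_{j=1}^{m_i}\alpha_{ij}H_i^{j-1}\Big)H_i.
\]
Substituting $\alpha_{i0} = (-1)^{m_i}\nr(H_i)$ and multiplying through by $(-1)^{m_i+1}$, the left side becomes $-\nr(H_i)\cdot 1_{n\times n}$... more carefully: multiply the displayed identity by $(-1)^{m_i+1}$ to get $(-1)^{m_i+1}\alpha_{i0}\cdot 1_{n\times n} = \big((-1)^{m_i+1}\sum_{j=1}^{m_i}\alpha_{ij}H_i^{j-1}\big)H_i = H_i^\ast H_i$. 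Since $(-1)^{m_i+1}\alpha_{i0} = (-1)^{m_i+1}(-1)^{m_i}\nr(H_i) = -\,(-1)^{2m_i}\nr(H_i)$... I will keep careful track of the sign so that it works out to $\nr(H_i)\cdot 1_{n\times n}$; the definition of $H_i^\ast$ has been arranged precisely so that this holds, so this is bookkeeping rather than a real difficulty.

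The main point requiring care — and the only genuine obstacle — is justifying the Cayley--Hamilton step over the noncommutative ring $\Lambda_i'$ (equivalently over $A_i$). The clean way is: Cayley--Hamilton holds for $H_i$ viewed as an $F_i$-linear endomorphism of an $m_i$-dimensional $\widetilde{F_i}$-vector space after extending scalars to a splitting field $\widetilde{F_i}$ of $D_i$, where $f_i$ becomes the ordinary characteristic polynomial of an honest matrix; since $f_i$ has coefficients in $\mathfrak{o}_i' \subset F_i$ and the identity $\sum_j\alpha_{ij}H_i^j = 0$ is an identity among $F_i$-endomorphisms, it descends and in fact holds already in $M_{n\times n}(\Lambda_i')$ because all the $\alpha_{ij}$ and the powers of $H_i$ lie there. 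One should cite the standard reference for the reduced characteristic polynomial and its properties (e.g. the treatment of $\nr$ in \cite{MR1972204}) rather than reprove it. Once the Cayley--Hamilton identity is in hand, assembling the pieces across $i=1,\dots,t$ as in the first paragraph completes the proof with no further effort.
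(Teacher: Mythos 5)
Your proof is correct and follows essentially the same route as the paper: reduce to simple components $A_i$, observe that $H_i^\ast$ is a polynomial in $H_i$ with coefficients in $\mathfrak{o}_i' \subset \Lambda_i'$ (hence lies in $M_{n\times n}(\Lambda_i')$), and apply the Cayley--Hamilton identity $f_i(H_i)=0$ for the reduced characteristic polynomial together with $\alpha_{i0} = (-1)^{m_i}\nr(H_i)$. The paper's proof is terser (it simply cites $f_i(H_i)=0$ and does the two-line sign computation), whereas you usefully spell out why Cayley--Hamilton is legitimate here (extend scalars to a splitting field, then descend since the $\alpha_{ij}$ and powers of $H_i$ already lie in $M_{n\times n}(\Lambda_i')$). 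The only blemish is a dropped minus sign in the intermediate line where you multiply the Cayley--Hamilton identity by $(-1)^{m_i+1}$ (the right-hand side should carry an extra factor of $-1$), but you flag this yourself and the final identity $H_i^\ast H_i = \nr(H_i)\cdot 1_{n\times n}$ is correct.
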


\begin{proof}
The first assertion is clear by the above considerations.
Since $f_{i}(H_{i}) = 0$, we find that
\[
H_{i}^{\ast} \cdot H_{i} = H_{i} \cdot H_{i}^{\ast}  = (-1)^{m_{i}+1} (-\alpha_{i0}) = \nr(H_{i}),
\]
as desired.
\end{proof}

\begin{remark}
Note that the above definition of $H^{\ast}$ differs slightly from the definition in \cite[\S 4]{MR2609173}.
However, the only properties of $H^{\ast}$ needed are those stated in Lemma \ref{lem:ast}.
Moreover, if $H$ is invertible (over $A$), then $H^{\ast}$ is uniquely determined by the
equation in Lemma \ref{lem:ast}, and hence the two definitions agree in this case.
The new definition has the advantage that it is precisely the adjoint matrix if $\Lambda$ is commutative,
and the assignment $H \mapsto H^{\ast}$ is often continuous (e.g.~with respect to the $p$-adic
topology if $\mathfrak{o} = \Z_{p}$).
\end{remark}

We define
\[
\mathcal{H} = \mathcal{H}(\Lambda) := \{ x \in \zeta(\Lambda) \mid xH^{\ast} \in M_{b \times b}(\Lambda) \, \forall H \in M_{b \times b}(\Lambda) \, \forall b \in \N \}.
\]
Since $x \cdot \nr(H) = xH^{\ast}H \in \zeta(\Lambda)$, in particular we have
$\mathcal{H} \cdot \mathcal{I} = \mathcal{H} \subset \zeta(\Lambda)$. Hence $\mathcal{H}$ is an ideal in the $\mathfrak{o}$-order
$\mathcal{I}(\Lambda)$.

\subsection{Fitting invariants and annihilation}

\begin{theorem}\label{thm:fitt-ann}
Let $\Lambda$ be a Fitting order and let $M$ be a finitely generated $\Lambda$-module. Then
\[
\mathcal{H}(\Lambda) \cdot \Fit_{\Lambda}^{\max}(M) \subset \Ann_{\zeta(\Lambda)}(M).
\]
\end{theorem}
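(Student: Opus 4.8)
The plan is to reduce the statement to a concrete computation with a single presentation matrix, using the alternative invariant $\Fit_\Lambda^{\max}(M)$ (which, by \eqref{eqn:Fitt-vs-Fit}, carries the same information as $\Fitt_\Lambda^{\max}(M)$) and the generalised adjoint matrix of \S\ref{subsec:adjoints}. First I would note that it suffices to show $x\cdot\theta\cdot M = 0$ for every $x\in\mathcal{H}(\Lambda)$ and every $\theta\in\Fit_\Lambda^{\max}(M)$; and since $\Fit_\Lambda^{\max}(M)=\Fit_\Lambda(h)$ for some presentation $h:\Lambda^a\to\Lambda^b\onto M$, and $\Fit_\Lambda(h)$ is generated over $\mathcal{U}(\Lambda)$ (a subring of $\zeta(\Lambda)$) by the elements $\nr(H)$ with $H\in S_b(h)$, it is enough to treat $\theta=\nr(H)$ for a single $b\times b$ submatrix $H$ of $h$. (If $a<b$ then $\Fit_\Lambda^{\max}(M)=0$ and there is nothing to prove, so assume $a\ge b$.)

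Next I would set up the linear algebra. Fix a $b\times b$ submatrix $H$ of $h$; it is obtained from $h$ by deleting $a-b$ rows, and the corresponding map $\bar h:\Lambda^b\to\Lambda^b$ still has the property that $\coker(\bar h)$ surjects onto $M$ — more precisely, $\bar h$ arises from a presentation in which we only use $b$ of the $a$ relations, so there is a surjection $\coker(\bar h)\onto M$. Hence it suffices to annihilate $\coker(\bar h)$, i.e.\ to show $x\cdot\nr(H)$ annihilates $\Lambda^b/H\Lambda^b$ (with the appropriate convention for row/column action matching \eqref{eqn:finite_representation}). Now invoke Lemma~\ref{lem:ast}: the generalised adjoint $H^\ast\in M_{b\times b}(\Lambda')$ satisfies $H^\ast H = H H^\ast = \nr_A(H)\cdot 1_{b\times b}$. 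By the very definition of $\mathcal{H}(\Lambda)$, for $x\in\mathcal{H}(\Lambda)$ we have $xH^\ast\in M_{b\times b}(\Lambda)$. Therefore $x\cdot\nr(H)\cdot 1_{b\times b} = (xH^\ast)\cdot H$ is a matrix over $\Lambda$ that factors through $H$, so $x\cdot\nr(H)\cdot\Lambda^b\subseteq H\Lambda^b$; this says precisely that the central element $x\cdot\nr(H)$ kills $\Lambda^b/H\Lambda^b$, hence kills its quotient $M$.

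Finally I would patch the generating set back together: $\Fit_\Lambda^{\max}(M)$ is generated as a $\mathcal{U}(\Lambda)$-module (a fortiori as a $\zeta(\Lambda)$-module) by the $\nr(H)$, $H\in S_b(h)$, together with $0$; since $\mathcal{H}(\Lambda)$ is an ideal of $\zeta(\Lambda)$ and each generator $\nr(H)$ is annihilated by all of $\mathcal{H}(\Lambda)$ after multiplication, every element of $\mathcal{H}(\Lambda)\cdot\Fit_\Lambda^{\max}(M)$ is a $\zeta(\Lambda)$-linear combination of elements of the form $x\cdot\nr(H)$ with $x\in\mathcal{H}(\Lambda)$, each of which lies in $\Ann_{\zeta(\Lambda)}(M)$; as $\Ann_{\zeta(\Lambda)}(M)$ is an ideal, we are done. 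One bookkeeping point to be careful about is that $\Fit_\Lambda^{\max}(M)$ is defined as the maximum over all presentations, so I should say at the outset: pick a presentation $h$ achieving the maximum (it exists by \cite{MR2609173}), and run the argument for that $h$; alternatively, run it for every presentation, which shows $\mathcal{H}(\Lambda)\cdot\Fit_\Lambda(h)\subseteq\Ann_{\zeta(\Lambda)}(M)$ for all $h$ and take the union.

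The main obstacle is essentially the first reduction — making sure that a $b\times b$ submatrix $H$ of $h$ genuinely presents a module surjecting onto $M$, with the row/column conventions of \eqref{eqn:finite_representation} handled consistently, so that ``$\nr(H)$ annihilates $\coker(H)$'' transfers to ``$\nr(H)$ annihilates $M$''. Once that is pinned down, the heart of the argument is the one-line computation $x\,\nr(H) = (xH^\ast)H$ with $xH^\ast\in M_{b\times b}(\Lambda)$, which is exactly what the definition of $\mathcal{H}(\Lambda)$ was engineered to make work.
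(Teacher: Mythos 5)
Your proof is correct and follows essentially the same route as the paper's: reduce to a single generator $\nr(H)$ for $H\in S_b(h)$, observe that $\coker(H)$ surjects onto $M$, and then use $x\,\nr(H)\cdot 1_{b\times b}=(xH^\ast)H$ together with $xH^\ast\in M_{b\times b}(\Lambda)$ (the defining property of $\mathcal{H}(\Lambda)$) to see that $x\,\nr(H)$ kills $\coker(H)$ and hence $M$. The paper phrases this last step as a commutative diagram rather than as the containment $x\,\nr(H)\Lambda^b\subseteq H\Lambda^b$, but the content is identical.
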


\begin{proof}(Also see \cite[Theorem 4.2]{MR2609173}.)
Let $\Lambda^{a} \stackrel{h}{\longrightarrow} \Lambda^{b} \onto M$ be a finite presentation of $M$.
Then it suffices to show that $\mathcal{H}(\Lambda) \cdot \Fit_{\Lambda}(h) \subset \Ann_{\zeta(\Lambda)}(M)$.
Fix $H \in S_{b}(h)$ and $x \in \mathcal{H}(\Lambda)$.
As $\Fit_{\Lambda}(h)$ is generated by elements of
the form $\nr(H)$, we are further reduced to showing that $x \cdot \nr(H)$ annihilates $M$.
The cokernel of $H$ surjects onto $M$ and hence the assertion follows from the commutative
diagram
\[
\xymatrix@1@!0@=36pt {
\Lambda^{b} \ar@{>}[rr]^{H}  & & \Lambda^{b} \ar@{>}[d]^{x \cdot \nr(H)} \ar@{>}[lld]_{x \cdot H^{\ast}}  \ar@{>>}[rr] & & \coker(H)
 \ar@{>}[d]^{x \cdot \nr(H)} \\
\Lambda^{b} \ar@{>}[rr]^{H} & & \Lambda^{b}  \ar@{>>}[rr] & & \coker(H)
}
\]
once one notes that the right most map is zero.
\end{proof}

\subsection{Fitting invariants of matrix rings over commutative rings}
Fix $n \in \N$ and let $\Lambda=M_{n \times n}(R)$
where $R$ is a commutative $\mathfrak{o}$-order.
Hence $\Lambda$ is both a Fitting order and a matrix ring over a commutative ring.
The aim of this section is to show that Definition \ref{def:fitt-matrix} is compatible with \eqref{eqn:fit-def}
in this case, thereby justifying the similar notation.

\begin{prop}\label{prop:matrix-over-comm-consistent}
Let $M$ be a finitely generated $\Lambda$-module. Then $\Fit_{\Lambda}(M) = \Fit^{\max}_{\Lambda}(M)$.
\end{prop}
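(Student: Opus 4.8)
The goal is to identify two $\zeta(\Lambda)$-submodules of $\zeta(A)$: the ideal $\Fit_\Lambda(M) = \Fit_R(e_{11}M)$ coming from Definition \ref{def:fitt-matrix}, and the invariant $\Fit_\Lambda^{\max}(M)$ coming from \eqref{eqn:fit-def}. The first point to pin down is the structure of $\zeta(A)$ and the reduced norm in this concrete situation. Since $\Lambda = M_{n\times n}(R)$ with $R$ a commutative $\mathfrak{o}$-order in a separable $F$-algebra, we have $A = M_{n\times n}(F\otimes_{\mathfrak{o}} R) = M_{n\times n}(E)$ with $E := F\otimes_{\mathfrak o} R$ commutative, so $\zeta(A) = E$, $\zeta(\Lambda) = R$, and for $H \in M_{b\times b}(\Lambda) = M_{nb\times nb}(R)$ the reduced norm $\nr_A(H)$ is simply the ordinary determinant $\det(H) \in E$ (computed with entries in the commutative ring $E$). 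In particular $\nr(\Lambda^\times) = \nr(\Gl_n(\Lambda)) = \{\det(U) : U \in \Gl_{n}(R)\} = R^\times$, so $\mathcal U(\Lambda) = R = \zeta(\Lambda)$. This already simplifies \eqref{eqn:fit-def}: for a presentation $h : \Lambda^a \to \Lambda^b$ of $M$, $\Fit_\Lambda(h)$ is the $R$-ideal generated by $\det(H)$ as $H$ ranges over the $b\times b$ submatrices of $h \in M_{a\times b}(\Lambda)$.

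**Main step: relate the two via Lemma \ref{lem:lambda-matrix-to-R}.** Given a $\Lambda$-presentation $\Lambda^a \xrightarrow{h} \Lambda^b \onto M$ with $a \ge b$, applying the exact functor $M \mapsto e_{11}M$ (and using $e_{11}\Lambda \simeq R^n$ as in the proof of Theorem \ref{thm:properties-matrix-over-comm}) yields an $R$-presentation $(e_{11}\Lambda)^a \xrightarrow{h'} (e_{11}\Lambda)^b \onto e_{11}M$, i.e.\ $R^{na} \xrightarrow{H'} R^{nb} \onto e_{11}M$, and by Lemma \ref{lem:lambda-matrix-to-R} the matrix $H'$ is precisely $\tilde H$, the matrix $H$ with entries read in $R$. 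So $\Fit_R(e_{11}M)$ is generated by the $nb\times nb$ minors of $\tilde H \in M_{na\times nb}(R)$, whereas $\Fit_\Lambda(h)$ is generated by the $\det(H)$ for $H$ a $b\times b$ submatrix of $h$ — which, viewed in $R$, is an $nb\times nb$ submatrix of $\tilde H$ obtained by selecting a subset of the $n$-row-blocks and a subset of the $n$-column-blocks. Thus $\Fit_\Lambda(h) \subset \Fit_R(e_{11}M) = \Fit_\Lambda(M)$ for every presentation $h$, and taking the maximum gives $\Fit_\Lambda^{\max}(M) \subset \Fit_\Lambda(M)$.

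**The reverse inclusion** requires exhibiting a presentation $h$ of $M$ realizing all of $\Fit_R(e_{11}M)$. Here I would use Proposition \ref{prop:matrix-ring-ideal-det} as a model: first reduce to $M = \Lambda/I$ — no, that is too special; instead, proceed directly. Choose any finite presentation $R^{c} \xrightarrow{\Phi} R^{d} \onto e_{11}M$ of the $R$-module $e_{11}M$; since $R$ is noetherian we may take $c \ge d$ without loss (pad with zero rows), and $\Fit_R(e_{11}M)$ is generated by all $d\times d$ minors of $\Phi$. The strategy is to ``blow up'' $\Phi$ to a $\Lambda$-presentation of $M$: using the Morita equivalence $G$ of Remark \ref{rmk:morita-equiv}, $M = G(e_{11}M) = \Lambda e_{11}\otimes_R e_{11}M$, and applying $G$ to the $R$-presentation produces a $\Lambda$-presentation $\Lambda^{c'} \xrightarrow{h} \Lambda^{d'} \onto M$. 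The key is to track how minors of $\Phi$ appear among the $\det(H)$, $H \in S_{d'}(h)$; by Lemma \ref{lem:lambda-matrix-to-R} applied in reverse, one can arrange that $h$ has a presenting matrix whose ``read-in-$R$'' version $\tilde H = H'$ recovers $\Phi$ (up to a permutation-equivalent block structure and possibly enlarging to make rows/columns appear in $n$-blocks), so that every $d\times d$ minor of $\Phi$ equals $\det(H)$ for a suitable $b\times b$ submatrix $H$ of $h$. Concretely: since $e_{11}\Lambda \simeq R^n$ and the functor is exact, one can present $M$ by the matrix $\mathrm{diag}(\Phi, \Phi, \ldots, \Phi)$-like constructions, or more cleanly, use that any $R$-linear map between free $R$-modules of ranks $n\alpha, n\beta$ that is ``block-compatible'' lifts to a $\Lambda$-linear map $\Lambda^\alpha \to \Lambda^\beta$; padding $\Phi$ with extra zero rows/columns to make its dimensions multiples of $n$ and to make it block-compatible costs nothing for the Fitting ideal. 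This yields $\Fit_\Lambda(M) = \Fit_R(e_{11}M) \subset \Fit_\Lambda(h) \subset \Fit_\Lambda^{\max}(M)$, completing the proof.

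**The main obstacle** I anticipate is the reverse inclusion — specifically, the bookkeeping needed to lift an arbitrary $R$-presentation of $e_{11}M$ to a $\Lambda$-presentation of $M$ in such a way that each $d\times d$ minor of the original $R$-matrix is literally realized as $\det(H)$ for some $b\times b$ submatrix $H$ of the lifted $\Lambda$-matrix. One must be careful that selecting $b$ rows out of a $\Lambda$-matrix corresponds, after ``reading in $R$'', to selecting full $n$-blocks of rows, not arbitrary rows; the trick is to pad so that the target $R$-presentation already has its relevant $d$ rows isolated within their own $n$-blocks (the other $n-1$ rows in each block being zero rows of padding), so that an arbitrary $d\times d$ minor of $\Phi$ becomes, up to sign and the obvious block-diagonal filler, a genuine $nb\times nb$ minor of the lifted matrix of the required special form. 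Everything else (exactness of $e_{11}(-)$, $\mathcal U(\Lambda) = \zeta(\Lambda)$, $\nr = \det$, independence of presentation for the commutative Fitting ideal) is standard and already available in the excerpt.
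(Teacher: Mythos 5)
Your setup (identifying $\zeta(\Lambda)=R=\mathcal U(\Lambda)=\mathcal I(\Lambda)$, $\nr=\det$) and your argument for $\Fit_\Lambda^{\max}(M)\subset\Fit_\Lambda(M)$ via Lemma \ref{lem:lambda-matrix-to-R} match the paper exactly, apart from a slip: a $b\times b$ $\Lambda$-submatrix of $h\in M_{a\times b}(\Lambda)$ corresponds to choosing $b$ of the $a$ row-blocks of $\tilde H$ and \emph{all} columns, not a subset of column-blocks. That does not affect the conclusion.

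For the reverse inclusion you take a genuinely different route from the paper, and it is here that the sketch has real problems. First, you propose to pad $\Phi$ with ``extra zero rows/columns to make its dimensions multiples of $n$''; adding zero \emph{columns} adds free generators with no relations, so it changes $\coker\Phi$ and hence the module $M$ you are presenting. Column padding must be done by appending an extra generator together with a relation killing it (an identity block), not by zero columns. Second, your stated block arrangement — ``the relevant $d$ rows isolated within their own $n$-blocks (the other $n-1$ rows in each block being zero rows of padding)'' — fails on its face: the selected $nd$ rows then include genuine zero rows, so the $nd\times nd$ determinant vanishes instead of equalling $\det(\Phi_0)$. The mention of ``the obvious block-diagonal filler'' suggests you actually mean identity filler rows in the padding columns, but as written the scheme is contradictory, and the arrangement that works (collect the $d$ chosen rows of $\Phi_0$ in a contiguous top-left block, append an identity block to reach $nd_0$ columns and zero rows to reach $nc_0$ rows, then select the top $d_0$ $\Lambda$-row-blocks) is not what you describe. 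These are not cosmetic: your ``main obstacle'' paragraph is essentially an acknowledgement that this bookkeeping is unresolved.

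The paper sidesteps all of this with a much simpler observation that does not require constructing a fresh $\Lambda$-presentation at all. Fix the given $\Lambda$-presentation $h$ with matrix $H$, so $\tilde H$ is an $na\times nb$ $R$-presentation of $e_{11}M$. For any $nb\times nb$ minor $\tilde T$ of $\tilde H$, a row permutation $\tilde E\in\Gl_{na}(R)$ brings the chosen $nb$ rows to the top. The key point is that $M_{na\times na}(R)$ and $M_{a\times a}(\Lambda)$ are literally the same ring, so $\tilde E$ \emph{is} a matrix $E\in\Gl_a(\Lambda)$; hence $EH$ is an equivalent $\Lambda$-presentation of $M$, its top $b$ $\Lambda$-rows form a $b\times b$ $\Lambda$-submatrix $T\in S_b(EH)$, and $\nr(T)=\det(\tilde T)$. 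Thus every generator of $\Fit_R(e_{11}M)$ lies in $\Fit_\Lambda(EH)\subset\Fit_\Lambda^{\max}(M)$, with no padding, no appeal to the inverse Morita functor $G$, and no need to worry about whether the lifted matrix presents $M$. I recommend you look for this kind of ``same underlying ring, two block structures'' argument before attempting explicit lifts.
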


\begin{proof}
First note that $R=\zeta(\Lambda)=\mathcal{U}(\Lambda)=\mathcal{I}(\Lambda)$.
Let
$
\Lambda^{a} \stackrel{h}{\longrightarrow} \Lambda^{b} \onto M
$
be a finite presentation of $\Lambda$. We can and do assume without loss of generality that $a \geq b$.
Let $H \in M_{a \times b}(\Lambda)$ and $H',\tilde{H}\in M_{na \times nb}(R)$
be the matrices corresponding to $h$ as in Lemma \ref{lem:lambda-matrix-to-R}; then
$H'=\tilde{H}$.
Hence we have
\begin{eqnarray*}
\Fit_{\Lambda}(h) : = \langle \nr(T) \mid T \in S_{b}(H) \rangle_{R}
&\subset&  \langle \nr(\tilde{T}) \mid \tilde{T} \in S_{nb}(\tilde{H}) \rangle_{R} \\
&=& \langle \nr(\tilde{T}) \mid \tilde{T} \in S_{nb}(H') \rangle_{R} \\
&=& \Fit_{R}(e_{11}M) =: \Fit_{\Lambda}(M).
\end{eqnarray*}
It follows that $\Fit^{\max}_{\Lambda}(M) \subset \Fit_{\Lambda}(M)$.

Now let $\tilde{T} \in S_{nb}(H')$. Then by swapping rows of $H'$ appropriately,
there exists $\tilde{E} \in \Gl_{na}(R)$
with $\det_{R}(\tilde{E}) = \pm 1$ such that the $nb \times nb$ submatrix of
$\tilde{E}H'$ formed by taking the first $nb$ rows is equal to $\tilde{T}$.
Let $E \in M_{a \times a}(\Lambda)$ (resp.\ $T \in M_{b \times b}(\Lambda)$) be the same matrix as $\tilde{E}$
(resp.\ $\tilde{T}$) but with entries considered in $\Lambda$ rather than $R$.
Then $E \in \Gl_{a}(\Lambda)$ and the diagram
\[
\xymatrix@1@!0@=36pt {
\Lambda^{a} \ar@{>}[rr]^{EH} \ar@{>}[d]^{E}_{\simeq}  & & \Lambda^{b} \ar@{=}[d] \ar@{>>}[rr] & & \coker(EH)
 \ar@{>}[d]^{\simeq} \\
\Lambda^{a} \ar@{>}[rr]^{H} & & \Lambda^{b} \ar@{>>}[rr] & & M
}
\]
commutes. (Note that the order of function composition and corresponding matrix multiplication
are reversed since we consider left $\Lambda$-modules and so functions are represented by
multiplying by their corresponding matrices on the right.)
Since $T$ is a $b \times b$ submatrix of $EH$ we therefore have
\[
\nr(\tilde{T})=\nr(T) \in \langle \nr(V) \mid V \in S_{b}(EH) \rangle_{R}
\subset \Fit^{\max}_{\Lambda}(\coker(EH)) = \Fit_{\Lambda}^{\max}(M).
\]
Since $\tilde{T} \in S_{nb}(H')$ was arbitrary, we have shown that
\[
\Fit_{\Lambda}(M) := \Fit_{R}(e_{11}M)
= \langle \nr(\tilde{V}) \mid \tilde{V} \in S_{nb}(H') \rangle_{R}
\subset \Fit_{\Lambda}^{\max}(M).
\]
Therefore we have  $\Fit^{\max}_{\Lambda}(M) = \Fit_{\Lambda}(M)$, as required.
\end{proof}

\section{Nice Fitting orders}\label{sec:nice-fItting-orders}

\begin{definition}\label{def:nice}
Let $\Lambda$ be a Fitting order over $\mathfrak{o}$.
Suppose that
$\Lambda = \oplus_{j=1}^{k} \Lambda_{j}$
where each $\Lambda_{j}$ is either a maximal $\mathfrak{o}$-order or is of the form $M_{a_{j} \times a_{j}}(\Gamma_{j})$
for some commutative ring $\Gamma_{j}$.
Then we say that $\Lambda$ is a \emph{nice} Fitting order.
\end{definition}

\begin{remark}
If a Fitting order $\Lambda$ is either maximal or commutative then it is immediate from the definition that $\Lambda$ is nice.
\end{remark}

\begin{prop}\label{prop:nice-equal}
Let $\Lambda$ be a nice Fitting order.
Then $\mathcal{U}(\Lambda)=\mathcal{I}(\Lambda)=\mathcal{H}(\Lambda)=\zeta(\Lambda)$.
\end{prop}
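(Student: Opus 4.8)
The plan is to reduce the claim to the two building-block cases covered by the definition of nice order, using the fact that all of the invariants $\mathcal{U}$, $\mathcal{I}$, $\mathcal{H}$ and $\zeta$ decompose compatibly along a direct sum decomposition $\Lambda = \oplus_{j=1}^{k}\Lambda_{j}$. First I would record that $\zeta(\Lambda) = \oplus_{j}\zeta(\Lambda_{j})$, that reduced norms are computed componentwise (as in \S\ref{subsec:nr}), and hence that $\mathcal{U}(\Lambda) = \oplus_{j}\mathcal{U}(\Lambda_{j})$, $\mathcal{I}(\Lambda) = \oplus_{j}\mathcal{I}(\Lambda_{j})$, and similarly $\mathcal{H}(\Lambda) = \oplus_{j}\mathcal{H}(\Lambda_{j})$ (the last because the defining condition $xH^{\ast}\in M_{b\times b}(\Lambda)$ splits into the analogous conditions on each block $xe_{j}H_{j}^{\ast}\in M_{b\times b}(\Lambda_{j})$). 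Since $\mathcal{U}\subset\mathcal{I}$ and $\mathcal{H}$ is an ideal of $\mathcal{I}$ with $\mathcal{H}\subset\zeta(\Lambda)$ always, and $\zeta(\Lambda)\subset\mathcal{U}$ trivially, it suffices to prove $\zeta(\Lambda_{j}) = \mathcal{U}(\Lambda_{j}) = \mathcal{I}(\Lambda_{j}) = \mathcal{H}(\Lambda_{j})$ for each $j$, i.e.\ in the two cases: $\Lambda_{j}$ maximal, and $\Lambda_{j} = M_{a\times a}(\Gamma)$ with $\Gamma$ commutative.

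For $\Lambda_{j}$ a maximal order this is standard and already noted in \S\ref{subsec:alt-def}: the reduced norm of any matrix over a maximal order lies in the centre, so $\mathcal{I}(\Lambda_{j})\subset\zeta(\Lambda_{j})$, whence equality, and then $H^{\ast}\in M_{b\times b}(\Lambda_{j})$ for every $H$ (since $H^{\ast}$ is a $\zeta(A_{j})$-polynomial in $H$ with coefficients in $\mathfrak{o}'_{j} = \zeta(\Lambda_{j})$, by the construction in \S\ref{subsec:adjoints} and the fact that a maximal order equals its own integral closure of the centre), so $\mathcal{H}(\Lambda_{j}) = \zeta(\Lambda_{j})$. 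For $\Lambda_{j} = M_{a\times a}(\Gamma)$, one has $\zeta(\Lambda_{j}) = \Gamma$ and the reduced norm on $M_{b\times b}(M_{a\times a}(\Gamma)) = M_{ab\times ab}(\Gamma)$ is the ordinary determinant valued in $\Gamma$; hence $\nr(H)\in\Gamma = \zeta(\Lambda_{j})$ for every $H$, giving $\mathcal{U}(\Lambda_{j}) = \mathcal{I}(\Lambda_{j}) = \zeta(\Lambda_{j})$. For $\mathcal{H}$ one uses that $H^{\ast}$ is the classical adjugate matrix in this commutative-matrix situation (by the remark following Lemma \ref{lem:ast}, since the new $H^{\ast}$ agrees with the adjoint when $\Lambda$ is a matrix ring over a commutative ring), so $H^{\ast}\in M_{b\times b}(\Lambda_{j})$ already, forcing $\mathcal{H}(\Lambda_{j}) = \zeta(\Lambda_{j})$.

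The main point requiring care — really the only non-formal step — is the compatibility of $\mathcal{H}$ with direct sums, since $\mathcal{H}$ is defined by a universal quantifier over all $b$ and all $H\in M_{b\times b}(\Lambda)$ rather than componentwise by construction; here one must check that $\mathcal{H}(\oplus_{j}\Lambda_{j}) = \oplus_{j}\mathcal{H}(\Lambda_{j})$ by observing that $M_{b\times b}(\Lambda) = \oplus_{j}M_{b\times b}(\Lambda_{j})$, that $H^{\ast} = \sum_{j}(He_{j})^{\ast}$ respects this splitting, and that $x = \sum_{j}xe_{j}$ satisfies $xH^{\ast}\in M_{b\times b}(\Lambda)$ if and only if $(xe_{j})(He_{j})^{\ast}\in M_{b\times b}(\Lambda_{j})$ for every $j$. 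Everything else is bookkeeping: assembling the two cases across the $k$ summands and invoking the obvious inclusions $\zeta(\Lambda)\subset\mathcal{U}(\Lambda)\subset\mathcal{I}(\Lambda)$ and $\mathcal{H}(\Lambda)\subset\zeta(\Lambda)$ to pinch all four modules together.
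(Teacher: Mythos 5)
Your proof is correct and its essential computational content is the same as the paper's: one shows that $H^{\ast}\in M_{n\times n}(\Lambda)$ by decomposing along $\Lambda=\oplus_{j}\Lambda_{j}$, using the reduced characteristic polynomial (with coefficients in the integrally closed centre) on the maximal summands and the classical adjugate on the matrix-ring summands. The paper's own proof is a bit more economical than yours: once $\zeta(\Lambda)\subset\mathcal{H}(\Lambda)$ is established (equivalently $1\in\mathcal{H}(\Lambda)$), the identity $\mathcal{H}(\Lambda)\cdot\mathcal{I}(\Lambda)=\mathcal{H}(\Lambda)\subset\zeta(\Lambda)$ from the fact that $\mathcal{H}(\Lambda)$ is an ideal of $\mathcal{I}(\Lambda)$ immediately gives $\mathcal{I}(\Lambda)\subset\mathcal{H}(\Lambda)\subset\zeta(\Lambda)\subset\mathcal{U}(\Lambda)\subset\mathcal{I}(\Lambda)$, so there is no need to verify $\mathcal{I}(\Lambda_{j})\subset\zeta(\Lambda_{j})$ block by block via reduced norms, nor to separately formalise the compatibility of $\mathcal{U}$, $\mathcal{I}$ and $\mathcal{H}$ with direct sums.
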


\begin{proof}
Fix $n \in \N$ and let $H \in M_{n \times n}(\Lambda)$.
Write $H=\sum_{j=1}^{k} H_{j}$ corresponding to the decomposition $\Lambda = \oplus_{j=1}^{k} \Lambda_{j}$.
If $\Lambda_{j}$ is a maximal order then it is clear from the definition of
$H_{j}^{*}$ that $H_{j}^{*} \in  M_{n \times n}(\Lambda_{j})$.
If $\Lambda_{j} \simeq M_{a_{j} \times a_{j}}(\Gamma_{j})$
for some commutative ring $\Gamma_{j}$, then $H_{j}^{\ast}$ is the usual adjoint matrix if considered as a matrix
in  $M_{na_{j} \times na_{j}}(\Gamma_{j})$, and so $H_{j}^{\ast} \in M_{n \times n}(\Lambda_{j})$.
Therefore $H^{\ast}=\sum_{j=1}^{k}H_{j}^{*}$ lies in $M_{n \times n}(\Lambda)$.
Since $n$ was arbitrary, it follows that $\zeta(\Lambda) \subset \mathcal{H}(\Lambda)$.
In particular, $1 \in \mathcal{H}(\Lambda)$ so must have $\mathcal{H}(\Lambda)=\mathcal{I}(\Lambda)$
since $\mathcal{H}(\Lambda)$ is an ideal of $\mathcal{I}(\Lambda)$.
Thus $\zeta(\Lambda)=\mathcal{I}(\Lambda)=\mathcal{H}(\Lambda)$.
The desired result now follows from the inclusions $\zeta(\Lambda) \subset \mathcal{U}(\Lambda) \subset \mathcal{I}(\Lambda)$.
\end{proof}

\begin{corollary} \label{cor:intersection-nice-Fitt-orders}
Suppose $\Lambda$ is a Fitting order that is an intersection of nice Fitting orders or is such that $\zeta(\Lambda)$
is maximal.
Then $\mathcal{U}(\Lambda)=\mathcal{I}(\Lambda)=\mathcal{H}(\Lambda)=\zeta(\Lambda)$.
In particular, this is the case if $\Lambda$ is a hereditary or graduated order over a complete discrete valuation ring.
\end{corollary}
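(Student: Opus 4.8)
The plan is to treat the two hypotheses separately, reducing each to the single statement that $H^{\ast} \in M_{b \times b}(\Lambda)$ for all $b \in \N$ and all $H \in M_{b \times b}(\Lambda)$, equivalently that $1 \in \mathcal{H}(\Lambda)$. This suffices: $\mathcal{H}(\Lambda)$ is an ideal of the ring $\mathcal{I}(\Lambda)$, so if it contains $1$ then $\mathcal{H}(\Lambda) = \mathcal{I}(\Lambda)$; combining this with the inclusions $\mathcal{H}(\Lambda) \subseteq \zeta(\Lambda) \subseteq \mathcal{U}(\Lambda) \subseteq \mathcal{I}(\Lambda)$ (the first by the definition of $\mathcal{H}$, the others recorded in \S\ref{subsec:alt-def}) forces all four modules to coincide with $\zeta(\Lambda)$.

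First suppose $\Lambda = \bigcap_{\alpha} \Lambda_{\alpha}$ is an intersection of nice Fitting orders $\Lambda_{\alpha}$ in $A$. The key point is that the generalised adjoint $H^{\ast}$ of \S\ref{subsec:adjoints} is built intrinsically from the reduced characteristic polynomials of the components $He_{i}$ over $A$, and hence does not depend on the choice of order containing the entries of $H$. Given $H \in M_{b \times b}(\Lambda)$ we have $H \in M_{b \times b}(\Lambda_{\alpha})$ for every $\alpha$; since $\Lambda_{\alpha}$ is nice, Proposition \ref{prop:nice-equal} gives $1 \in \mathcal{H}(\Lambda_{\alpha})$ and therefore $H^{\ast} \in M_{b \times b}(\Lambda_{\alpha})$. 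As this holds for all $\alpha$ and $\bigcap_{\alpha} M_{b \times b}(\Lambda_{\alpha}) = M_{b \times b}\bigl(\bigcap_{\alpha}\Lambda_{\alpha}\bigr) = M_{b \times b}(\Lambda)$, we conclude $H^{\ast} \in M_{b \times b}(\Lambda)$, as required.

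Now suppose instead that $\zeta(\Lambda)$ is a maximal order. Since the maximal $\mathfrak{o}$-order in the commutative separable algebra $\zeta(A) = F_{1} \oplus \cdots \oplus F_{t}$ is unique and equal to $\mathfrak{o}_{1}' \oplus \cdots \oplus \mathfrak{o}_{t}' = \zeta(\Lambda')$, we get $\zeta(\Lambda) = \zeta(\Lambda')$; in particular each central idempotent $e_{i}$ lies in $\zeta(\Lambda)$ and $\mathfrak{o}_{i}' \subseteq \zeta(\Lambda)$. For $H \in M_{b \times b}(\Lambda)$, write $H = \sum_{i} H_{i}$ with $H_{i} = He_{i} \in M_{b \times b}(\Lambda e_{i}) \subseteq M_{b \times b}(\Lambda)$; then $H_{i}^{\ast} = (-1)^{m_{i}+1}\sum_{j=1}^{m_{i}} \alpha_{ij} H_{i}^{j-1}$ is a $\zeta(\Lambda)$-linear combination of powers of $H_{i}$, the coefficients $\alpha_{ij}$ lying in $\mathfrak{o}_{i}' \subseteq \zeta(\Lambda)$, and hence lies in $M_{b \times b}(\Lambda)$. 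Summing over $i$ gives $H^{\ast} \in M_{b \times b}(\Lambda)$, as required.

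For the final assertion one invokes two classical structural facts (see, e.g., \cite{MR892316} and the references therein): over a complete discrete valuation ring every hereditary order is an intersection of maximal orders, and maximal orders are nice by the remark after Definition \ref{def:nice}, so the first hypothesis applies to hereditary orders; and a graduated order over a complete discrete valuation ring has maximal centre, so the second hypothesis applies to graduated orders. There is no serious obstacle in the argument; the only points needing care are the order-independence of $H^{\ast}$ in the first case and the observation that the coefficients of the reduced characteristic polynomials are absorbed into $\zeta(\Lambda)$ when the centre is maximal, both of which are immediate once the construction in \S\ref{subsec:adjoints} is unwound.
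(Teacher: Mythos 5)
Your proof is correct and follows the paper's argument for the two main hypotheses: for intersections of nice Fitting orders you observe, as the paper does, that $H^{\ast}$ is determined by $H$ alone (via the reduced characteristic polynomials over $A$) and lands in each $M_{b\times b}(\Lambda_{\alpha})$ by Proposition \ref{prop:nice-equal}, hence in their intersection; for maximal centre you observe that the coefficients $\alpha_{ij}$ and idempotents $e_{i}$ all lie in $\zeta(\Lambda')=\zeta(\Lambda)$, so $H^{\ast}\in M_{b\times b}(\Lambda)$ directly from the construction. The one place you diverge is in the final \emph{in particular} clause: you dispatch hereditary orders by invoking the classical fact that over a complete discrete valuation ring a hereditary order is an intersection of maximal orders, thereby routing it through the first hypothesis, whereas the paper instead recalls that maximal and hereditary orders are both graduated and that graduated orders have maximal centre, so that both subcases are absorbed into the second hypothesis at once. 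Both routes are standard and valid; the paper's is a bit more economical since a single observation (``graduated $\Rightarrow$ maximal centre'') covers maximal, hereditary, and graduated orders simultaneously, while yours keeps the two classical inputs separate.
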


\begin{proof}
Suppose $\Lambda = \cap_{i} \Lambda_{i}$ where each $\Lambda_{i}$ is a nice Fitting order.
Fix $n \in \N$ and let $H \in M_{n \times n}(\Lambda)$. Then the argument above shows that
$H^{\ast} \in \Lambda_{i}$ for each $i$ and so $H^{\ast} \in \Lambda$. The rest of the argument follows as before.
If $\zeta(\Lambda)$ is maximal, then the result follows directly from the definitions in \S \ref{subsec:adjoints}.

Let $\Lambda$ be a graduated order over a complete discrete valuation ring.
(Recall that an order is graduated if there exist orthogonal primitive idempotents $e_{1}, \ldots, e_{t} \in \Lambda$
with $1 = e_{1} + \cdots + e_{t}$ such that $e_{i}\Lambda e_{i}$ is a maximal order for $i=1,\ldots,t$. In particular,
maximal and hereditary orders are graduated. See \cite[\S II]{MR724074} for further details.)
The result now follows from the observation that $\zeta(\Lambda)$ is maximal.
\end{proof}

\begin{definition}\label{def:max-comm-hybrid}
Let $G$ be a finite group with  commutator subgroup $G'$.
Let $\mathfrak{o}$ be a Fitting domain whose characteristic is either zero or does not divide $|G|$.
Let $\Lambda'$ be a maximal order containing the group ring $\mathfrak{o}[G]$
and let $e=|G'|^{-1}\Tr_{G'}$ where $\Tr_{G'} := \sum_{g' \in G'}g'$.
Define $\Lambda'(G,G'):=\mathfrak{o}[G]e \oplus \Lambda'(1-e)$.
\end{definition}

\begin{prop}\label{prop:max-comm-hybrid}
In the setting above, $\Lambda'(G,G')$ is a nice Fitting order containing $\mathfrak{o}[G]$.
\end{prop}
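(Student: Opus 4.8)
The plan is to verify the three conditions in turn: that $\Lambda'(G,G')$ contains $\mathfrak{o}[G]$, that it is a Fitting order, and that it is nice. First I would check that $e = |G'|^{-1}\Tr_{G'}$ is a well-defined central idempotent of $\mathfrak{o}[G]$: it is central because $G'$ is normal in $G$ (so conjugation permutes the summands of $\Tr_{G'}$), it is idempotent by the standard computation $\Tr_{G'}^2 = |G'|\Tr_{G'}$, and it lies in $\mathfrak{o}[G]$ precisely because the hypothesis on $\mathrm{char}(\mathfrak{o})$ guarantees $|G'|$ is a unit in $\mathfrak{o}$ (as $|G'|$ divides $|G|$). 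Consequently $\mathfrak{o}[G] = \mathfrak{o}[G]e \oplus \mathfrak{o}[G](1-e)$, and since $\mathfrak{o}[G](1-e) \subset \Lambda'(1-e)$ because $\mathfrak{o}[G] \subset \Lambda'$, we get $\mathfrak{o}[G] \subset \mathfrak{o}[G]e \oplus \Lambda'(1-e) = \Lambda'(G,G')$.

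Next I would argue $\Lambda'(G,G')$ is a Fitting order over $\mathfrak{o}$, i.e.\ an $\mathfrak{o}$-order in the separable $F$-algebra $F[G]$ (separability holds by Remark~\ref{rmk:grp-ring-sep-alg} since $|G|$ is invertible in $F$). Both summands $\mathfrak{o}[G]e$ and $\Lambda'(1-e)$ are $\mathfrak{o}$-orders in the respective block algebras $F[G]e$ and $F[G](1-e) = Ae$ and $A(1-e)$ where $A = F[G]$; indeed $\mathfrak{o}[G]e$ is a finitely generated $\mathfrak{o}$-module spanning $F[G]e$, and $\Lambda'(1-e)$ is a direct summand of the maximal order $\Lambda'$. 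Their direct sum is therefore an $\mathfrak{o}$-order in $A$, and it is a Fitting order since $\mathfrak{o}$ is a Fitting domain.

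Finally, to show niceness I would use the fact that $F[G]e \cong F[G^{\mathrm{ab}}]$ via the natural projection $G \to G^{\mathrm{ab}} = G/G'$ (since multiplication by $e$ kills $G'$, identifying $F[G]e$ with the group algebra of the abelianisation), which is a commutative algebra; hence $\mathfrak{o}[G]e$ is a commutative $\mathfrak{o}$-order, i.e.\ of the form $M_{1\times 1}(\Gamma)$ with $\Gamma$ commutative. On the other side, $\Lambda'(1-e)$ is a direct summand of the maximal order $\Lambda'$, hence is itself maximal (a direct summand of a maximal order, cut out by a central idempotent, is maximal in the corresponding block algebra). So $\Lambda'(G,G') = \mathfrak{o}[G]e \oplus \Lambda'(1-e)$ is a direct sum of a commutative order and a maximal order, which is exactly the form required in Definition~\ref{def:nice}. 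The only genuinely delicate point is confirming that $\Lambda'(1-e)$ is maximal in $A(1-e)$ rather than merely an order: this follows because $\Lambda'$ decomposes along the central idempotents $e$ and $1-e$ as $\Lambda' e \oplus \Lambda'(1-e)$, each factor is a maximal order in its block by the general theory of orders (\cite[Theorem (10.5)]{MR1972204}), and $\Lambda'(1-e)$ is a direct summand of $\Lambda'(1-e')$ summed over the primitive central idempotents $e'$ of $A$ dividing $1-e$. I expect this maximality bookkeeping, together with carefully identifying $F[G]e$ with $F[G^{\mathrm{ab}}]$, to be the main (though routine) work.
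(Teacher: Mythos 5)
Your proposal is correct and takes essentially the same approach as the paper: observe that $\mathfrak{o}[G]e$ is commutative and $\Lambda'(1-e)$ is maximal (so $\Lambda'(G,G')$ is nice by Definition~\ref{def:nice}), that separability comes from Remark~\ref{rmk:grp-ring-sep-alg}, and that containment follows from the $e/(1-e)$ decomposition of $\mathfrak{o}[G]$ together with $\mathfrak{o}[G](1-e)\subset\Lambda'(1-e)$. The only cosmetic difference is that the paper's proof is much terser (e.g.\ it simply notes $\Lambda'(G,G')=\mathfrak{o}[G]+\Lambda'(1-e)$ for the containment, and asserts commutativity of $\mathfrak{o}[G]e$ without invoking the identification with $\mathfrak{o}[G^{\mathrm{ab}}]$ — one can also see commutativity directly from $ghe=hg[g,h]e=hge$ since $[g,h]\in G'$ and $[g,h]e=e$), but your more explicit bookkeeping, including the maximality of the idempotent summand $\Lambda'(1-e)$, is all sound.
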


\begin{proof}
Remark \ref{rmk:grp-ring-sep-alg} and the hypotheses ensure that $\Lambda'(G,G')$ is in fact a Fitting order.
Note that $\mathfrak{o}[G]e$ is commutative and $\Lambda'(1-e)$ is maximal;
hence $\Lambda'(G,G')$ is nice.
The second assertion follows from the observation that
$\Lambda'(G,G') = \mathfrak{o}[G] + \Lambda'(1-e)$.
\end{proof}

\begin{remark}
Of course, $\Lambda'(G,G')$ depends on the choice of $\Lambda'$.
However, for many applications this choice does not matter.
For explicit examples, see Examples \ref{ex:A4} and \ref{ex:D8}.
\end{remark}

\begin{prop}\label{prop:p-not-div-comm-finite}
Let $\mathfrak{o}$ be a Fitting domain with residue field of characteristic $p>0$
and let $G$ be a finite group with  commutator subgroup $G'$.
Suppose the characteristic of $\mathfrak{o}$ is either zero or does not divide $|G|$.
Then the group ring $\mathfrak{o}[G]$ is a Fitting order and the following are equivalent:
\begin{enumerate}
\item $p$ does not divide $|G'|$;
\item $\mathfrak{o}[G]$ is a direct sum of matrix rings over commutative rings;
\item $\mathfrak{o}[G]$ is a nice Fitting order;
\item $\mathcal{H}(\mathfrak{o}[G])=\zeta(\mathfrak{o}[G])$;
\item the group $G$ has an abelian $p$-Sylow subgroup $P$ and a normal $p$-complement $N$,
in which case $G$ is isomorphic to a semi-direct product $N \rtimes P$.
\end{enumerate}
\end{prop}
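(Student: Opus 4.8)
The plan is to prove the chain of implications $(\mathrm{v}) \Rightarrow (\mathrm{ii}) \Rightarrow (\mathrm{iii}) \Rightarrow (\mathrm{iv}) \Rightarrow (\mathrm{i}) \Rightarrow (\mathrm{v})$, so that all five statements become equivalent. Some of these are easy given the earlier results: $(\mathrm{ii}) \Rightarrow (\mathrm{iii})$ is immediate from Definition \ref{def:nice} (a direct sum of matrix rings over commutative rings is a nice Fitting order, once one knows $\mathfrak{o}[G]$ is a Fitting order, which follows from Remark \ref{rmk:grp-ring-sep-alg} and the hypothesis on the characteristic of $\mathfrak{o}$), and $(\mathrm{iii}) \Rightarrow (\mathrm{iv})$ is exactly Proposition \ref{prop:nice-equal}. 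So the real content lies in $(\mathrm{v}) \Rightarrow (\mathrm{ii})$, $(\mathrm{iv}) \Rightarrow (\mathrm{i})$, and $(\mathrm{i}) \Rightarrow (\mathrm{v})$.

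For $(\mathrm{i}) \Rightarrow (\mathrm{v})$, I would argue as follows. If $p \nmid |G'|$, then $P \cap G'$ is a $p$-subgroup of $G'$ of index prime to $p$, hence trivial, so $P$ maps injectively into the abelian quotient $G/G'$; thus $P$ is abelian. For the normal $p$-complement, I would invoke a classical transfer-theoretic criterion: since $P$ embeds in the abelian group $G/G'$, the transfer map $G \to P$ restricted to $P$ is an isomorphism onto a complement, or more directly one can cite Burnside's normal $p$-complement theorem after checking that $P$ is contained in the center of its normalizer (abelianness of $P$ together with $p \nmid |G'|$ forces $N_G(P)/C_G(P)$, which is a $p'$-group acting on $P$, to be trivial by a Frattini/coprime-action argument using that the image of $P$ in $G/G'$ is normal). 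Once $N$ is a normal $p$-complement, $|N|$ and $|P|$ are coprime and $|N||P| = |G|$, so by Schur–Zassenhaus $G \cong N \rtimes P$. Conversely, $(\mathrm{v}) \Rightarrow (\mathrm{i})$: if $G = N \rtimes P$ with $P$ abelian and $N$ the normal $p$-complement, then $G' \subseteq N[G,G]$; more carefully, $G/N \cong P$ is abelian so $G' \subseteq N$, whence $|G'|$ divides $|N|$, which is prime to $p$.

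For $(\mathrm{v}) \Rightarrow (\mathrm{ii})$, this is the structural heart. Writing $G = N \rtimes P$ with $P$ abelian and $p \nmid |N|$, I would first note that $|N|$ is invertible in $\mathfrak{o}$ (its residue characteristic is $p$), so by a Maschke-type argument $\mathfrak{o}[N]$ is a maximal order, in fact a product of matrix rings $M_{d_i \times d_i}(\mathfrak{o}_i)$ over commutative (even unramified) extensions $\mathfrak{o}_i$ of $\mathfrak{o}$. Then $\mathfrak{o}[G] = \mathfrak{o}[N] \rtimes P$ is a crossed product, and since $P$ is abelian and acts on the set of central primitive idempotents of $\mathfrak{o}[N]$, one decomposes $\mathfrak{o}[G]$ according to $P$-orbits of these idempotents. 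On each orbit one gets a matrix ring over a crossed product of an abelian group by a commutative ring — and because $P$ is abelian, the stabilizer acts on the corresponding $\mathfrak{o}_i$, with the crossed-product Brauer obstruction vanishing (a cyclotomic/abelian cohomology vanishing, or directly because $P$ abelian forces the twisting cocycle to be a coboundary after inducing up), so each block is again a matrix ring over a commutative ring. The hardest step — and the one I would spend the most care on — is exactly this: showing that the crossed product $\mathfrak{o}_i \ast P_{\mathrm{stab}}$ arising on each $P$-orbit is Morita equivalent to, or literally isomorphic to a matrix ring over, a commutative ring; this requires controlling the $2$-cocycle in $H^2$ and using that $P$ is abelian (and possibly that $\mathfrak{o}$ is complete local, so that idempotents lift and local-global principles for the relevant Brauer groups apply).

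For $(\mathrm{iv}) \Rightarrow (\mathrm{i})$, I would argue by contraposition: if $p \mid |G'|$, produce a concrete element $H \in M_{b \times b}(\mathfrak{o}[G])$ with $H^{\ast} \notin M_{b \times b}(\mathfrak{o}[G])$ after multiplying by all of $\zeta(\mathfrak{o}[G])$, so that $\mathcal{H}(\mathfrak{o}[G]) \subsetneq \zeta(\mathfrak{o}[G])$. The natural approach is to find a simple component $A_i = e_i A$ with nontrivial Schur index or, more robustly, a component on which $\nr(\mathfrak{o}[G] e_i)$ fails to contain $1$; since $p \mid |G'|$ there is a non-abelian simple quotient of $\F_p[G]$, and lifting this via completeness of $\mathfrak{o}$ and examining the reduced norm of a suitable group-ring element (e.g. $\Tr_{G'}$ or an idempotent-like element) on that component shows $\mathcal{I}(\mathfrak{o}[G]) \neq \zeta(\mathfrak{o}[G])$, hence a fortiori $\mathcal{H}(\mathfrak{o}[G]) \neq \zeta(\mathfrak{o}[G])$. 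I expect this argument may instead be phrased as: $\mathcal{H}(\Lambda) = \zeta(\Lambda)$ forces $\zeta(\Lambda)$ to equal the image order $\mathcal{I}(\Lambda) \subseteq \zeta(\Lambda')$, which combined with $p \mid |G'|$ contradicts the known wild ramification of $\zeta(\mathfrak{o}[G])$ in that situation.
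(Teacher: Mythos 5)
Your chain $(\mathrm{v})\Rightarrow(\mathrm{ii})\Rightarrow(\mathrm{iii})\Rightarrow(\mathrm{iv})\Rightarrow(\mathrm{i})\Rightarrow(\mathrm{v})$ matches the paper's overall architecture (the paper runs $(\mathrm{i})\Leftrightarrow(\mathrm{v})$, $(\mathrm{i})\Rightarrow(\mathrm{ii})\Rightarrow(\mathrm{iii})\Rightarrow(\mathrm{iv})\Rightarrow(\mathrm{i})$, which is equivalent), and your handling of $(\mathrm{ii})\Rightarrow(\mathrm{iii})\Rightarrow(\mathrm{iv})$ and of the group-theoretic equivalence $(\mathrm{i})\Leftrightarrow(\mathrm{v})$ is fine (in fact $(\mathrm{i})\Rightarrow(\mathrm{v})$ can be done more cheaply than your Burnside route: $P\cap G'=1$ gives $P$ abelian, and the preimage in $G$ of the $p'$-part of the abelian group $G/G'$ is already a normal $p$-complement). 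However, the two load-bearing implications are not actually proved in your proposal.

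For $(\mathrm{iv})\Rightarrow(\mathrm{i})$ the paper has a very short and concrete argument that your contraposition sketch misses: take $H=0\in M_{1\times 1}(\mathfrak{o}[G])$. Each component $H_i=0$ has reduced characteristic polynomial $X^{n_is_i}$, so from the definition of the generalised adjoint one reads off $H_i^{\ast}=1$ when $n_is_i=1$ and $H_i^{\ast}=0$ otherwise; summing over the central primitive idempotents gives $H^{\ast}=|G'|^{-1}\Tr_{G'}$. If $\mathcal{H}(\Lambda)=\zeta(\Lambda)$ then $1\in\mathcal{H}(\Lambda)$ forces $H^{\ast}\in\Lambda=\mathfrak{o}[G]$, hence $|G'|$ is invertible in $\mathfrak{o}$, i.e.\ $p\nmid|G'|$. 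Your alternative plan — to exhibit $\mathcal{I}(\mathfrak{o}[G])\neq\zeta(\mathfrak{o}[G])$ via reduced norms of elements like $\Tr_{G'}$ — is directionally plausible (if $\mathcal{I}\supsetneq\zeta(\Lambda)$ then $\mathcal{H}\subsetneq\zeta(\Lambda)$ because $\mathcal{H}\cdot\mathcal{I}\subset\zeta(\Lambda)$), but you never actually produce a group-ring element whose reduced norm lies outside $\zeta(\Lambda)$, and invoking a ``non-abelian simple quotient of $\mathbb{F}_p[G]$'' and ``wild ramification of $\zeta(\mathfrak{o}[G])$'' are heuristics, not an argument. The $H=0$ device sidesteps all of this.

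For $(\mathrm{v})\Rightarrow(\mathrm{ii})$ (equivalently $(\mathrm{i})\Rightarrow(\mathrm{ii})$) you explicitly flag the $2$-cocycle/crossed-product step as ``the hardest step'' and leave it open, whereas the paper simply cites a known result (DeMeyer--Janusz, \cite[Corollary, p.\ 390]{MR704622}): for a field $k$ of characteristic $p$, $k[G]$ is a direct sum of matrix rings over commutative rings if and only if $p\nmid|G'|$, and the conclusion for $\mathfrak{o}[G]$ then follows. Your outline is the shape of the proof of that cited theorem, but as written it is an IOU on the central point — you would at minimum need the cohomological input that the relevant crossed-product classes are trivial (for instance because the residue fields involved are finite, so their Brauer groups vanish), and that lifting along the complete local base $\mathfrak{o}$ preserves the matrix-ring structure. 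As it stands, neither $(\mathrm{iv})\Rightarrow(\mathrm{i})$ nor $(\mathrm{v})\Rightarrow(\mathrm{ii})$ is closed, so the equivalence is not yet established.
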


\begin{proof}
Remark \ref{rmk:grp-ring-sep-alg} and the hypotheses ensure that $\mathfrak{o}[G]$ is in fact a Fitting order.
The equivalence of (i) and (v) is 
a straightforward exercise in elementary group theory (also see \cite[p.\ 390]{MR704622}).
A special case of  \cite[Corollary, p.\ 390]{MR704622} shows that (i) implies (ii); 
Definition \ref{def:nice} immediately gives (ii) $\implies$ (iii); 
Proposition \ref{prop:nice-equal} gives (iii) $\implies$ (iv).
It remains to show that (iv) $\implies$ (i).
Set $\Lambda := \mathfrak{o}[G]$ and suppose that  $\mathcal{H}(\Lambda)=\zeta(\Lambda)$.
Let $H=0 \in \Lambda = M_{1 \times 1}(\Lambda)$.
Recall the notation of \S \ref{subsec:adjoints} and write
$H = \sum_{i=1}^{t} H_{i} \in \oplus_{i=1}^{t} \Lambda_{i}'$.
Then the reduced characteristic polynomial of $H_{i}$ is $f_{i}(X)=X^{n_{i}s_{i}}$
and so $H_{i}^{\ast}$ is $h_{i}(0)$ where $h_{i}(X):=X^{n_{i}s_{i}-1}$.
Hence $H_{i}^{\ast}=1$ if $n_{i}s_{i}=1$ and $H_{i}^{\ast}=0$ if $n_{i}s_{i}>1$.
Therefore $H^{\ast}=|G'|^{-1}\Tr_{G'}$.
However, $1 \in \zeta(\Lambda)=\mathcal{H}(\Lambda)$ and so 
$H^{\ast} \in \Lambda=\mathfrak{o}[G]$ by definition of $\mathcal{H}(\Lambda)$ (see \S \ref{subsec:adjoints}).
But then $|G'|$ must be invertible in $\mathfrak{o}$
and so $p \nmid |G'|$ since the residue field of $\mathfrak{o}$ has characteristic $p$.
\end{proof}

\begin{example}\label{ex:A4}
Let $A_{4}$ be the alternating group on $4$ letters.
Then $\Z_{3}[A_{4}]$ is neither commutative nor maximal, yet is
a nice Fitting order by an application of Proposition \ref{prop:p-not-div-comm-finite}.
In fact, one can show that $\Z_{3}[A_{4}] = \Lambda'(A_{4},A_{4}')$ (recall Definition \ref{def:max-comm-hybrid}) 
where $\Lambda'$ is the unique maximal order in $\Q_{3}[A_{4}]$ containing $\Z_{3}[A_{4}]$ and 
$A_{4}'$ is the commutator subgroup of $A_{4}$.
\end{example}

\begin{example}\label{ex:metacyclic}
Let $p,q$ be distinct primes with $p$ odd such that $q \mid (p-1)$.
Let $r$ be a primitive $q$-th root of $1$ $\mathrm{mod}$ $p$.
Let $F_{p,q}:= \langle x, y \mid x^{p}=y^{q}=1, yxy^{-1}=y^{r} \rangle$.
Then $F_{p,q}$ is a metacyclic group of order $pq$ and in the special
case $q=2$, this is the dihedral group of order $2p$.
One can show that $\Z_{q}[F_{p,q}]$ is a nice Fitting order
by either applying Proposition \ref{prop:p-not-div-comm-finite} or
following the explicit computation of \cite[\S 34E]{MR632548}.
\end{example}

\begin{remark}\label{rmk:non-ab-stick-improvement}
Let $L/K$ be a finite Galois CM-extension of number fields with Galois group $G$.
Let $p$ be an odd prime and let $\cl_{L}$ denote the class group of $L$.
Under mild technical hypotheses on $p$, \cite[Theorem 1.2]{MR2771125}
gives annihilators of $\Z_{p} \otimes_{\Z} \cl_{L}$ in terms of special values of a truncated Artin L-function of $L/K$.
Building on this result, \cite[Corollary 7.2]{MR2609173} uses noncommutative Fitting
invariants to predict similar annihilators
under the assumption of the relevant special case of the $p$-part of the
equivariant Tamagawa number conjecture (ETNC) (see \cite{MR1884523}, \cite{MR1863302}).
Now Proposition \ref{prop:p-not-div-comm-finite}
can be used to give explicit examples in which \cite[Corollary 7.2]{MR2609173} predicts strictly
more annihilators than the unconditional annihilators of \cite[Theorem 1.2]{MR2771125}
(e.g.\ one can use a minor variant of Example \ref{ex:A4} in the case
$p=3$ and $G=A_{4} \times C_{2}$, where $C_{2}$ is the group of order $2$.)
The results of \S \ref{sec:ann-and-change-of-order} can be used to give further examples
in the case that $p$ divides $|G'|$.
\end{remark}

\begin{prop}\label{prop:p-not-div-comm-profinite}
Let $\mathfrak{o}$ be a Fitting domain of characteristic zero with
residue field of characteristic $p>0$.
Let $G$ be a profinite group containing a finite normal subgroup $H$ such
that $G/H \simeq \Gamma$, where $\Gamma$ is a pro-$p$ group isomorphic to $\Z_{p}$.
Then the commutator subgroup $G'$ is finite, the complete group algebra $\mathfrak{o}[[ G ]]$
is a Fitting order, and the following are equivalent:
\begin{enumerate}
\item $p$ does not divide $|G'|$;
\item $\mathfrak{o}[[G]]$ is a direct sum of matrix rings over commutative rings;
\item $\mathfrak{o}[[G]]$ is a nice Fitting order;
\item $\mathcal{H}(\mathfrak{o}[[G]])=\zeta(\mathfrak{o}[[G]])$.
\end{enumerate}
\end{prop}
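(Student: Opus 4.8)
The plan is to mirror the proof of Proposition \ref{prop:p-not-div-comm-finite}, but working relative to a power series base ring rather than $\mathfrak{o}$ itself. Since $G/H$ is abelian, $G'\subseteq H$, so $G'$ is finite. Next I would invoke the standard fact that a profinite group of this shape (equivalently, a compact $p$-adic Lie group of dimension one) contains an open \emph{central} subgroup $\Gamma_{0}\cong\Z_{p}$; note that $\Gamma_{0}\cap H=1$ because $\Z_{p}$ is torsion-free. Set $R:=\mathfrak{o}[[\Gamma_{0}]]\cong\mathfrak{o}[[T]]$. Then $R$ is again a Fitting domain, its residue field equals that of $\mathfrak{o}$ (so has characteristic $p$), the algebra $\Lambda:=\mathfrak{o}[[G]]$ is a free $R$-module of rank $[G:\Gamma_{0}]$ with $R\subseteq\zeta(\Lambda)$, and $A:=Q(R)\otimes_{R}\Lambda$ is a separable $Q(R)$-algebra (finite dimensional, semisimple by Maschke since $[G:\Gamma_{0}]$ is invertible, and $\mathrm{char}\,Q(R)=0$). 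Hence $\Lambda$ is a Fitting order over $R$, and the whole formalism of \S\ref{sec:non-comm-fitt} applies with $\mathfrak{o}$ replaced by $R$. (If the existence of $\Gamma_{0}$ and the Fitting-order property of $\mathfrak{o}[[G]]$ are not already available, this is the place to establish them.)

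With this in place, $(ii)\Rightarrow(iii)$ is immediate from Definition \ref{def:nice}, and $(iii)\Rightarrow(iv)$ is Proposition \ref{prop:nice-equal}. For $(iv)\Rightarrow(i)$ I would run the argument of Proposition \ref{prop:p-not-div-comm-finite} verbatim, with $A$ in place of $F[G]$: taking $X=0\in M_{1\times1}(\Lambda)$ one computes $X^{\ast}=e$, where $e$ is the central idempotent of $A$ cutting out the commutative Wedderburn components; since the maximal commutative quotient of $A$ is $Q(R)\otimes_{R}\mathfrak{o}[[G^{\mathrm{ab}}]]$, one gets $e=|G'|^{-1}\Tr_{G'}$. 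If $\mathcal{H}(\Lambda)=\zeta(\Lambda)$ then $1\in\mathcal{H}(\Lambda)$, so $X^{\ast}\in\Lambda$; writing elements of $\mathfrak{o}[[G]]$ in an $R$-basis adapted to a set of coset representatives containing $H$, this forces $|G'|^{-1}\in R$, hence (as $|G'|^{-1}\in F$) $|G'|^{-1}\in\mathfrak{o}$, and since the residue field of $\mathfrak{o}$ has characteristic $p$ this gives $p\nmid|G'|$.

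The substantive implication is $(i)\Rightarrow(ii)$. Here I would use that $\Lambda=\varprojlim_{n}\mathfrak{o}[G/\Gamma_{0}^{p^{n}}]$, the subgroups $\Gamma_{0}^{p^{n}}$ being cofinal among the open normal subgroups of $G$. Each $G_{n}:=G/\Gamma_{0}^{p^{n}}$ is finite with $G_{n}'\cong G'/(G'\cap\Gamma_{0}^{p^{n}})=G'$ (again because $G'\subseteq H$ and $H\cap\Gamma_{0}=1$), so $p\nmid|G_{n}'|$, and Proposition \ref{prop:p-not-div-comm-finite} shows $\mathfrak{o}[G_{n}]$ is a direct sum of matrix rings over commutative rings. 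The kernel $I_{n}$ of $\Lambda\onto\mathfrak{o}[G_{n}]$ is contained in $J(\Lambda)$ (it is the closed two-sided ideal generated by $\gamma^{p^{n}}-1$, where $\gamma$ generates $\Gamma_{0}$, and $\gamma-1\in J(\Lambda)$ since $\Lambda$ is module-finite over the complete local ring $R$). Consequently each central primitive idempotent $\epsilon$ of $\Lambda$ has nonzero — hence again primitive central — image in every $\mathfrak{o}[G_{n}]$, so $\epsilon\mathfrak{o}[G_{n}]\cong M_{a}(\Gamma^{(n)})$ for a commutative local ring $\Gamma^{(n)}$ with $a=a(\epsilon)$ independent of $n$. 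I would then fix, for each block $\epsilon$, a complete matrix unit system in $\epsilon\Lambda$ (lifting matrix units of $\epsilon\Lambda/J(\epsilon\Lambda)\cong M_{a}(k)$, using that $\epsilon\Lambda$ is $J$-adically complete), push it down to each $\mathfrak{o}[G_{n}]$, and thereby identify the projections $\epsilon\Lambda\onto\epsilon\mathfrak{o}[G_{n}]$ with $M_{a}(\Gamma\onto\Gamma^{(n)})$ for $\Gamma:=e_{11}\epsilon\Lambda e_{11}$. Hence $\epsilon\Lambda\cong M_{a}(\varprojlim_{n}\Gamma^{(n)})=M_{a}(\Gamma)$ with $\Gamma$ commutative, and summing over the finitely many blocks gives $(ii)$.

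I expect the main obstacle to be precisely this descent in $(i)\Rightarrow(ii)$: ensuring that block idempotents and matrix unit systems lift coherently through the tower $\{\mathfrak{o}[G_{n}]\}$, so that the identification $\epsilon\Lambda\cong M_{a}(\Gamma)$ is compatible with the transition maps, and confirming that $\Gamma$ itself (not merely its residue ring) is commutative. The underlying general principle — that an inverse limit of finite direct sums of matrix rings over commutative rings, taken along surjections with radical kernels, is again of this form — could alternatively be isolated as a lemma and cited. A secondary point requiring care is the construction of $\Gamma_{0}$, and hence the verification that $\mathfrak{o}[[G]]$ is a Fitting order, although this is by now standard for one-dimensional $p$-adic Lie groups.
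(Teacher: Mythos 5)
Your preliminaries (finiteness of $G'$ via $G'\subseteq H$, passage to the power series ring $R=\mathfrak{o}[[\Gamma_{0}]]\cong\mathfrak{o}[[T]]$ for an open central $\Gamma_{0}\cong\Z_{p}$, and the verification that $\mathfrak{o}[[G]]$ is a Fitting order over $R$) match the paper, as does the chain $(ii)\Rightarrow(iii)\Rightarrow(iv)\Rightarrow(i)$, which you reduce verbatim to the argument of Proposition~\ref{prop:p-not-div-comm-finite}. The genuine divergence is in $(i)\Rightarrow(ii)$. The paper's route is short and structural: it reduces modulo the maximal ideal $\mathfrak{P}$ of $R$ to land on $\overline\Lambda=k[H\rtimes C_{p^n}]$, invokes \cite[Theorem 1]{MR704622} to see $\overline\Lambda$ is Azumaya when $p\nmid|G'|$, lifts the Azumaya property along the complete local base via \cite[Theorem 4.7]{MR0121392}, and then kills the Brauer class of $\Lambda$ by the injectivity of $\Br(\mathfrak{O}_{i})\to\Br(k_{i})$ from \cite[Corollary 6.2]{MR0121392} together with the fact that the Azumaya group ring $\overline\Lambda$ is already a product of matrix rings over commutative rings. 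Your route instead writes $\Lambda$ as $\varprojlim_{n}\mathfrak{o}[G_{n}]$ with each layer handled by Proposition~\ref{prop:p-not-div-comm-finite}, and tries to propagate the matrix-ring structure up the tower by lifting matrix units. That is a legitimate alternative, and it has the merit of staying inside the semiperfect-ring toolbox rather than invoking Brauer group machinery; but it pays for this with several descent steps the paper never has to confront.

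The most substantive of those steps is one you state without justification: that a primitive central idempotent $\epsilon$ of $\Lambda$ has ``nonzero --- hence again primitive central --- image'' in each $\mathfrak{o}[G_{n}]$. Nonzero is clear from $I_{n}\subseteq J(\Lambda)$, but primitivity of $\bar\epsilon$ in $\zeta(\mathfrak{o}[G_{n}])$ does not follow from $I_{n}\subseteq J(\Lambda)$ alone, because a central idempotent of the quotient need not lift to a \emph{central} idempotent of $\Lambda$; the block decomposition of a quotient by a radical ideal can in principle be finer than that of the source. To close this you would need, for instance, to observe that $\zeta(\Lambda)\to\zeta(\mathfrak{o}[G_{n}])$ is surjective (class sums of $G_{n}$ lift to class sums of $G$) and that $I_{n}\cap\zeta(\Lambda)\subseteq J(\zeta(\Lambda))$, so that $\zeta(\mathfrak{o}[G_{n}])$ is a quotient of the product of local rings $\zeta(\Lambda)$ by a radical ideal and thus has the same block idempotents; alternatively one could first prove $\Lambda$ is Azumaya over $\zeta(\Lambda)$, but that is precisely the paper's starting point. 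The remaining issues you flag --- constancy of $a(\epsilon)$ along the tower (which does follow since $\epsilon\Lambda/J(\epsilon\Lambda)\twoheadrightarrow\epsilon\mathfrak{o}[G_{n}]/J(\epsilon\mathfrak{o}[G_{n}])$ is an isomorphism), identification of the pushed-down corner rings with the $\Gamma^{(n)}$ (any two primitive-idempotent corners of $M_{a}(\Gamma^{(n)})$ for local $\Gamma^{(n)}$ are isomorphic), and commutativity of $\Gamma=\varprojlim\Gamma^{(n)}$ --- are all fillable, but they add up to a noticeably longer argument than the paper's Azumaya/Brauer shortcut, and the unjustified ``hence primitive central'' is the point where the sketch as written genuinely breaks down.
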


\begin{remark}
If the characteristic of $\mathfrak{o}$ is non-zero then $\mathfrak{o}[[G]]$ is not necessarily separable and
thus not necessarily a Fitting order. However, even in this situation the proof below shows it is still the case that (i) $\implies$ (ii)
and so the results of Theorem \ref{thm:properties-matrix-over-comm} can still be applied when (i) holds.
A similar remark also applies to Proposition \ref{prop:p-not-div-comm-finite}.
\end{remark}

\begin{proof}
Set $\Lambda:=\mathfrak{o}[[G]]$ and 
let $\mathfrak{O} := \mathfrak{o}[[ T ]]$
be the power series ring in one variable over $\mathfrak{o}$.
We fix a topological generator $\gamma$ of $\Gamma$ and choose a natural number $n$ such that
$\gamma^{p^{n}}$ is central in $G$.
Since $\Gamma^{p^n} \simeq \Z_{p}$, there is an isomorphism
$\mathfrak{o}[[ \Gamma^{p^n} ]] \simeq \mathfrak{O}$
induced by $\gamma^{p^n} \mapsto 1+T$.
Note that $G$ can be written as a semi-direct product $H \rtimes \Gamma$;
hence if we view $\Lambda$ as an $\mathfrak{O}$-module, there is a decomposition
\[
\Lambda = \bigoplus_{i=0}^{p^n-1} \mathfrak{O} \gamma^{i} [H].
\]
Hence $\Lambda$ is finitely generated as an $\mathfrak{O}$-module and is an
$\mathfrak{O}$-order in the separable $F:=Quot(\mathfrak{O})$-algebra
$A = \mathcal{Q}(G) := \bigoplus_{i} F \gamma^i[H]$.
Note that $A$ is obtained from $\Lambda$ by inverting all regular elements.
Since $\mathfrak{O}$ is again a Fitting domain, $\Lambda$ is a Fitting order over $\mathfrak{O}$.

Let $\mathfrak{p}$ (resp.\ $\mathfrak{P}$) be the maximal ideal of $\mathfrak{o}$ (resp.\ $\mathfrak{O}$).
Then $\mathfrak{P}$ is generated by $\mathfrak{p}$ and $T$.
Since $\gamma^{p^n} = 1 + T \equiv 1 \mod \mathfrak{P}$, we have
\[
\overline{\Lambda} := \Lambda / \mathfrak{P} \Lambda = \bigoplus_{i=0}^{p^{n}-1} k \gamma^{i} [H]
= k[H \rtimes C_{p^n}],
\]
where $C_{p^n}$ denotes the cyclic group of order $p^n$ and
$k := \mathfrak{O} / \mathfrak{P} = \mathfrak{o}/\mathfrak{p}$ is the residue field of characteristic $p$.
Since $G / H$ is abelian, the commutator subgroup $G'$ of $G$ is actually a subgroup of $H$
and thus is finite. Moreover, $G'$ identifies with the commutator subgroup of $H \rtimes C_{p^n}$.

That (ii) $\implies$ (iii) $\implies$ (iv) $\implies$ (i) follows by the same reasoning as that in the proof of
Proposition \ref{prop:p-not-div-comm-finite}. Thus it remains to show that (i) $\implies$ (ii). 
Suppose that $p \nmid |G'|$.
Then $k[H \rtimes C_{p^n}]$ is separable over its centre, i.e., it is an Azumaya algebra by \cite[Theorem 1]{MR704622}.
Moreover, $\overline{\Lambda}=k[H \rtimes C_{p^n}]$ and $k=\mathfrak{O}/\mathfrak{P}$,
so \cite[Theorem 4.7]{MR0121392} 
shows that $\Lambda$ is also an Azumaya algebra.
However, $\zeta(\Lambda)$ is semiperfect by \cite[Example 23.3]{MR1838439}
and thus a direct sum of local rings by \cite[Theorem 23.11]{MR1838439},
say
\[
\zeta(\Lambda) = \bigoplus_{i=1}^{r} \mathfrak{O}_{i},
\]
where each $\mathfrak{O}_{i}$ contains $\mathfrak{O}$.
By \cite[Proposition 6.5 (ii)]{MR632548} each $ \mathfrak{O}_{i}$ is in fact a complete local ring.
Let $\mathfrak{P}_{i}$ be the maximal ideal of $\mathfrak{O}_{i}$ and
$k_{i} := \mathfrak{O}_{i} / \mathfrak{P}_{i}$ be the residue field.
Since $\mathfrak{P} \subset \mathfrak{P}_{i}$,
the natural projection $\mathfrak{O}_{i} \onto k_{i}$ factors through
$\mathfrak{O}_{i} \onto \mathfrak{O}_{i} / \mathfrak{P} = \mathfrak{O}_{i} \otimes_{\mathfrak{O}} k$.
Hence we have the corresponding homomorphisms of Brauer groups
\[
\Br(\mathfrak{O}_{i}) \rightarrow \Br(\mathfrak{O}_{i} / \mathfrak{P}) \rightarrow \Br(k_{i}).
\]
Now $\Br(\mathfrak{O}_{i}) \to \Br(k_{i})$ is injective by
\cite[Corollary 6.2]{MR0121392}
and hence $\Br(\mathfrak{O}_{i}) \rightarrow \Br(\mathfrak{O}_{i} / \mathfrak{P})$ must also be injective.
This yields an embedding
\[
\Br(\zeta(\Lambda)) = \bigoplus_{i=1}^{r} \Br(\mathfrak{O}_{i}) \hookrightarrow \bigoplus_{i=1}^{r} \Br(\mathfrak{O}_{i} \otimes_{\mathfrak{O}} k) = \Br(\zeta(\Lambda) \otimes_{\mathfrak{O}} k).
\]
Since $\Lambda$ is Azumaya, it defines a class $[\Lambda] \in \Br(\zeta(\Lambda))$ which is mapped to
$[\overline{\Lambda}]$ via this embedding.
However, $\overline{\Lambda}$ is a group ring of a finite group over a field of positive characteristic
and, as noted in the remark after \cite[Corollary, p.\ 390]{MR704622}, 
such a group ring is Azumaya if and only if it is a direct product of matrix rings over commutative rings.
Hence $[\overline{\Lambda}]$ is trivial and thus so is $[\Lambda]$.
Therefore $\Lambda$ is a direct sum of matrix rings over commutative rings.
\end{proof}

\begin{theorem}\label{thm:nice-properties}
Let $\Lambda$ be a nice Fitting order over the Fitting domain $\mathfrak{o}$.
Let $M,M_{1}, M_{2}$ and $M_{3}$ be finitely generated $\Lambda$-modules.
\begin{enumerate}
  \item We have $\Fit_{\Lambda}^{\max}(M) \subset \Ann_{\zeta(\Lambda)}(M)$.
  \item Suppose that $\Lambda$ is a direct sum of matrix rings over commutative rings or that $\mathfrak{o}$ is a complete discrete valuation ring. If $M_{2} = M_{1} \oplus M_{3}$, then
 \[
\Fit^{\max}_{\Lambda}(M_{2}) =  \Fit^{\max}_{\Lambda}(M_{1}) \cdot \Fit^{\max}_{\Lambda}(M_{3}).
 \]
  \item If $\Lambda$ is a maximal order over a complete discrete valuation ring $\mathfrak{o}$,
  and $M_{1} \hookrightarrow M_{2} \onto M_{3}$ is an exact sequence, then
  \begin{equation} \label{eqn:additivity-in-ses}
      \Fit^{\max}_{\Lambda}(M_{2}) =  \Fit^{\max}_{\Lambda}(M_{1}) \cdot \Fit^{\max}_{\Lambda}(M_{3}).
  \end{equation}
\end{enumerate}
\end{theorem}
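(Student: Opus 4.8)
Part (i) is immediate from what has already been established: by Proposition~\ref{prop:nice-equal} we have $\mathcal{H}(\Lambda)=\zeta(\Lambda)$, so in particular $1\in\mathcal{H}(\Lambda)$, and Theorem~\ref{thm:fitt-ann} then gives $\Fit_{\Lambda}^{\max}(M)=\zeta(\Lambda)\cdot\Fit_{\Lambda}^{\max}(M)\subseteq\Ann_{\zeta(\Lambda)}(M)$.

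For part (ii) the plan is to decompose the nice order as a ring direct sum $\Lambda=\bigoplus_{j=1}^{k}\Lambda_{j}$ as in Definition~\ref{def:nice}, so that $\zeta(\Lambda)=\bigoplus_{j}\zeta(\Lambda_{j})$ contains the corresponding central idempotents $f_{j}$ (the identity elements of the $\Lambda_{j}$). Applying the $\Fit$-analogue of Theorem~\ref{thm:fitt-thm}(vi) to each $f_{j}\in\zeta(\Lambda)$ yields $\Fit_{\Lambda}^{\max}(N)=\bigoplus_{j}\Fit_{\Lambda_{j}}^{\max}(f_{j}N)$ for every $\Lambda$-module $N$, while products of $\zeta(\Lambda)$-submodules of $\zeta(A)=\bigoplus_{j}\zeta(Af_{j})$ are computed componentwise since $f_{j}f_{j'}=0$ for $j\neq j'$; hence it suffices to prove the direct-sum formula over each $\Lambda_{j}$. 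Each $\Lambda_{j}$ is itself a Fitting order, and if $\Lambda_{j}\simeq M_{a_{j}\times a_{j}}(\Gamma_{j})$ with $\Gamma_{j}=\zeta(\Lambda_{j})$ then $\Gamma_{j}$ is a commutative $\mathfrak{o}$-order, so Proposition~\ref{prop:matrix-over-comm-consistent} identifies $\Fit_{\Lambda_{j}}^{\max}$ with the ideal of Definition~\ref{def:fitt-matrix} and the required formula is Theorem~\ref{thm:properties-matrix-over-comm}(iv); this already disposes of (ii) when $\Lambda$ is a direct sum of matrix rings over commutative rings. If instead $\Lambda_{j}$ is a maximal order, then $\mathfrak{o}$ is a complete discrete valuation ring by hypothesis and the needed formula is the special case of part~(iii), a direct sum being an exact sequence. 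Thus (ii) reduces to (iii).

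For part (iii), with $\Lambda$ a maximal order over a complete discrete valuation ring $\mathfrak{o}$, I would first use that $\Lambda=\bigoplus_{i=1}^{t}\Lambda_{i}'$ with each $\Lambda_{i}'$ a maximal order in the simple block $A_{i}$ and with $\zeta(\Lambda)=\bigoplus_{i}\mathfrak{o}_{i}'$ now containing the primitive central idempotents $e_{i}$ of $A$; since $e_{i}(\cdot)$ preserves exactness and, as above, $\Fit_{\Lambda}^{\max}$ and products split componentwise, we may assume $A=M_{n}(D)$ is simple and $\Lambda\simeq M_{n}(\Delta)$ with $\Delta$ the maximal order of the skewfield $D$. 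Next, flatness of $-\otimes_{\mathfrak{o}}F$ gives $\dim_{F}M_{2,F}=\dim_{F}M_{1,F}+\dim_{F}M_{3,F}$, while Theorem~\ref{thm:fitt-thm}(vii) (with $A$ simple) shows $\Fit_{\Lambda}^{\max}$ vanishes on any $N$ with $N_{F}\neq 0$; so if $M_{2}$ is not $\mathfrak{o}$-torsion then both sides of \eqref{eqn:additivity-in-ses} are zero, and we may assume $M_{1},M_{2},M_{3}$ are all $\mathfrak{o}$-torsion. The crucial point is then that every finitely generated $\mathfrak{o}$-torsion $\Lambda$-module admits a quadratic presentation: under the Morita equivalence between $M_{n}(\Delta)$ and $\Delta$, such a module corresponds to a finitely generated torsion $\Delta$-module, which by the structure theory of maximal orders over complete discrete valuation rings (see \cite{MR1972204}) is a finite direct sum of cyclic modules $\Delta/\Delta\pi^{a}$ with $\pi$ a uniformizer of $\Delta$; each of these has the quadratic presentation $\Delta\longrightarrow\Delta\onto\Delta/\Delta\pi^{a}$ given by right multiplication by $\pi^{a}$, and carrying a direct sum of such presentations back through the Morita functor and absorbing the resulting projective summand into a free module produces a quadratic presentation over $\Lambda$. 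With $M_{1}$ and $M_{3}$, hence also $M_{2}$, possessing quadratic presentations, \eqref{eqn:additivity-in-ses} is precisely the $\Fit$-analogue of Theorem~\ref{thm:fitt-thm}(iii); this proves (iii), and feeding it back into the argument above completes (ii).

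The step I expect to be the main obstacle is the quadratic-presentation claim for $\mathfrak{o}$-torsion modules over a maximal order over a complete discrete valuation ring: it relies on the (noncommutative) elementary-divisor theory for such orders together with a careful Morita-theoretic transfer from $\Delta$ to $M_{n}(\Delta)$. The remaining ingredients — the blockwise reductions, the non-torsion bookkeeping via reduced dimensions, and the matrix-ring-over-a-commutative-ring case — are routine given the results already assembled.
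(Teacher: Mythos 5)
Your proof is correct and proceeds along the same overall lines as the paper's, but you establish the crucial step of part~(iii) — that finitely generated $\mathfrak{o}$-torsion modules over a maximal order $\Lambda$ over a complete DVR admit quadratic presentations — by a genuinely different route. You transfer to a Morita-equivalent maximal order $\Delta$ of a skewfield, invoke the noncommutative elementary divisor theorem to write a torsion $\Delta$-module as $\bigoplus \Delta/\Delta\pi^{a_i}$, each with an obvious quadratic presentation, and then carry these back across Morita. The paper instead isolates this as Lemma~\ref{lem:dvr-implies-quadratic} and gives a shorter, more conceptual argument: take any surjection $\pi\colon\Lambda^{k}\onto M$; since maximal orders are hereditary, $\ker(\pi)$ is projective, and since $F\otimes_{\mathfrak{o}}M=0$ forces $F\otimes_{\mathfrak{o}}\ker(\pi)\simeq A^{k}$, the projective $\ker(\pi)$ must be free of rank $k$ (Reiner's Theorem~18.10). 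Your argument buys a slightly more concrete picture (explicit cyclic building blocks) at the cost of heavier machinery (Morita transfer plus the full elementary divisor theorem), whereas the paper's argument only uses hereditarity and the freeness of projectives, and sidesteps Morita entirely. The non-torsion bookkeeping you do via \ref{thm:fitt-thm}(vii) is also fine; the paper achieves the same via its part~(i), using that a non-torsion module over a simple block has no nonzero central annihilators.

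One small gap worth closing: at the very end of (iii) you cite only the $\Fit$-analogue of Theorem~\ref{thm:fitt-thm}(iii), which produces the equality for $\Fitt_{\Lambda}(M_{i})$ (or $\Fit_{\Lambda}(M_{i})$), not a priori for $\Fit_{\Lambda}^{\max}(M_{i})$. You still need Theorem~\ref{thm:fitt-thm}(v), together with $\mathcal{I}(\Lambda)=\zeta(\Lambda)$ from Proposition~\ref{prop:nice-equal}, to identify $\Fit_{\Lambda}^{\max}(M_{i})$ with $\Fit_{\Lambda}(M_{i})$ when $M_{i}$ has a quadratic presentation. This is exactly what the paper's proof spells out.
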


\begin{proof}
Property (i) follows from combining Proposition \ref{prop:nice-equal} and Theorem \ref{thm:fitt-ann}.
For (ii) it suffices to treat the cases where $\Lambda$ is a matrix ring over a commutative ring or a maximal order over a complete discrete valuation ring.
In the former case, (ii) is Theorem \ref{thm:properties-matrix-over-comm} (iv); in the latter, (ii) follows from (iii) applied to the tautological exact sequence
$M_{1} \hookrightarrow M_{1} \oplus M_{3} \onto M_{3}$. So it suffices to prove (iii).
We shall need the following lemma.

\begin{lemma}\label{lem:dvr-implies-quadratic}
Let $\Lambda$ be a maximal order over a complete discrete valuation ring $\mathfrak{o}$ such that the $F$-algebra $A$ is simple. Let $M$ be a finitely generated $\Lambda$-module.
Then either $F \otimes_{\mathfrak{o}} M \neq 0$ and $\Fit^{\max}_{\Lambda}(M) = 0$ or $M$ admits a quadratic presentation.
\end{lemma}

\begin{proof}
Since $A$ is simple, it is isomorphic to a matrix ring $M_{n \times n}(D)$, where $D$ is a skewfield
of finite dimension over its centre $L$, and $L$ is a finite field extension of $F$.
Let $\mathfrak{o}_{L}$ be the integral closure of $\mathfrak{o}$ in $L$.
Then $\mathfrak{o}_{L}$ is the centre of $\Lambda$ and $M$ is also an $\mathfrak{o}_{L}$-module.
If $L \otimes_{\mathfrak{o}_{L}} M = F \otimes_{\mathfrak o} M \neq 0$,
then there is no nonzero element in $\mathfrak{o}_{L}$ annihilating $M$.
This implies that $\Fit_{\Lambda}^{\max} (M) = 0$ by (i) of the Theorem.

Now suppose that $F \otimes_{\mathfrak{o}} M =0$ and choose an epimorphism $\pi: \Lambda^{k} \onto M$.
Since maximal orders are hereditary by \cite[Theorem 26.12]{MR632548},
$\ker(\pi)$ is projective by \cite[Proposition 4.3]{MR632548}.
But as $F \otimes_{\mathfrak{o}} M  =0$, we have $F \otimes_{\mathfrak{o}} \ker(\pi) \simeq A^{k}$;
thus $\ker(\pi) \simeq \Lambda^{k}$ by \cite[Theorem 18.10]{MR1972204}.
\end{proof}

We return to the proof of Theorem \ref{thm:nice-properties} (iii).
Since the reduced norm is computed component-wise, we may assume that $A$ is simple.
If $F \otimes_{\mathfrak o} M_{2} \neq 0$, then also $F \otimes_{\mathfrak o} M_{1} \neq 0$ or
$F \otimes_{\mathfrak{o}} M_{3} \neq 0$ and both sides in (\ref{eqn:additivity-in-ses}) are zero by
Lemma \ref{lem:dvr-implies-quadratic}.
If $F \otimes_{\mathfrak{o}} M_{2} = 0$, then also $F \otimes_{\mathfrak{o}} M_{1} = F \otimes_{\mathfrak{o}} M_{3} = 0$.
Hence $M_{1}$, $M_{2}$ and $M_{3}$ admit quadratic presentations by Lemma \ref{lem:dvr-implies-quadratic}.
Noting that $\mathcal{I}(\Lambda)=\zeta(\Lambda)$, the result now follows from Theorem \ref{thm:fitt-thm} (iii) and (v)
(where, as noted in \S \ref{subsec:alt-def}, $\Fitt_{\Lambda}^{\max}$ may be replaced by $\Fit_{\Lambda}^{\max}$).
\end{proof}

\begin{remark}
It is useful to be able to determine whether or not a given presentation of a finitely generated
$\Lambda$-module $M$ can be used to compute $\Fit_{\Lambda}^{\max}(M)$.
If $\Lambda$ is a direct sum of matrix rings over commutative rings, this problem is solved by
Proposition \ref{prop:matrix-over-comm-consistent}; recall that Fitting invariants over commutative rings do not depend on the chosen presentation.
If $\Lambda$ is a maximal order over a complete discrete valuation ring, we may apply Lemma \ref{lem:dvr-implies-quadratic}.
Hence we have solved this question for maximal Fitting invariants over arbitrary nice Fitting orders
over complete discrete valuation rings.
However, we note that if $\Lambda$ is isomorphic to a nice Fitting order, then it may be necessary to compute
this isomorphism explicitly, though in many cases it is possible to get away with less.
\end{remark}

\begin{example}
Let  $G$ be a finite group and let $\mathfrak{o}$ be a complete discrete valuation ring with field of fractions $F$
of characteristic zero.
Suppose the group algebra $F[G]$ decomposes into a direct sum of matrix rings over a field, i.e.,
the Schur indices of all $F$-irreducible characters of $G$ are equal to $1$.
(This happens, for example, if $G$ is dihedral or symmetric, or if $G$ is a $p$-group where $p$ is an odd prime not necessarily equal to the residue characteristic of $\mathfrak{o}$; see \cite[\S 74]{MR892316} for more on this topic.)
Let $\Lambda=\Lambda'(G,G')$ as in Definition \ref{def:max-comm-hybrid};
an explicit example is $\Lambda = \Z_{3}[A_{4}]$ as discussed in Example \ref{ex:A4}.
Now one only needs to compute the central idempotent  $e=|G'|^{-1}\Tr_{G'}$.
Indeed, $\Lambda(1-e)$ is a  direct sum of matrix rings over
complete discrete valuation rings; thus Remark \ref{rmk:fit-cent-fit-lambda} shows that
$\Fit_{\Lambda(1-e)}((1-e)M)$ is completely determined by $\Fit_{\zeta(\Lambda(1-e))}((1-e)M)$.
Since $\Lambda e$ is commutative, we therefore see that
$\Fit_{\Lambda}(M)$ is completely determined by $\Fit_{\zeta(\Lambda)}(M)$ in this case.
\end{example}

\section{Quotients by left ideals}\label{sec:quots-by-left-ideals}

We compute the maximal Fitting invariant of the quotient of a Fitting order by a left ideal in several cases.

\begin{theorem}
Let $\Lambda$ be a Fitting order and let $I$ be a left ideal of $\Lambda$. Then
\begin{enumerate}
\item We have
$\langle \nr(x) \mid x \in I \rangle_{\mathcal{I}(\Lambda)} \subset
\Fit^{\max}_{\Lambda}(\Lambda/I)$.
\item If $I$ is a principal left ideal generated by $\alpha$ then
$\Fit_{\Lambda}(\Lambda/I) \cdot \mathcal{I}(\Lambda) = \nr(\alpha) \cdot \mathcal{I}(\Lambda)$.
\item If $\Lambda$ is a direct sum of matrix rings over commutative rings, or $\Lambda$
is a nice Fitting order over a complete discrete valuation ring, then
\[
\Fit^{\max}_{\Lambda}(\Lambda/I) =
\langle \nr(x) \mid x \in I \rangle_{\zeta(\Lambda)}.
\]
\end{enumerate}
\end{theorem}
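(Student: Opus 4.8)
The plan is to prove the three parts in order, leveraging the machinery already developed. For part (i), the key observation is that if $\alpha_1,\dots,\alpha_r$ generate $I$ as a left ideal, then there is a presentation $\Lambda^r \stackrel{h}{\to} \Lambda \onto \Lambda/I$ where $h$ is the column vector $(\alpha_1,\dots,\alpha_r)^t$. The $1\times 1$ submatrices of $h$ are exactly the $\alpha_j$, so $\nr(\alpha_j) \in \Fit_\Lambda(h)$ for each $j$; since $\Fit^{\max}_\Lambda(\Lambda/I)$ is a module over $\mathcal{I}(\Lambda)$ (as noted in \S\ref{subsec:alt-def}) and contains some $\Fit_\Lambda(h')$-equivalent representative, I would argue that each $\nr(\alpha_j)$, and hence the $\mathcal{I}(\Lambda)$-module they generate, lands inside $\Fit^{\max}_\Lambda(\Lambda/I)$. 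The one subtlety is that an \emph{arbitrary} $x \in I$ need not be one of the fixed generators; but one can enlarge the generating set to include $x$ (as in the proof of Proposition \ref{prop:matrix-ring-ideal-det}), giving a possibly different presentation whose Fitting invariant is still $\subset \Fit^{\max}_\Lambda(\Lambda/I)$ by maximality. This handles (i).

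For part (ii), when $I = \Lambda\alpha$ is principal, take the quadratic presentation $\Lambda \stackrel{\alpha}{\to} \Lambda \onto \Lambda/I$ (multiplication by $\alpha$ on the appropriate side). Then $S_1(\alpha) = \{\alpha\}$, so by \eqref{eqn:fit-def} we get $\Fit_\Lambda(\Lambda/I) = \langle \nr(\alpha) \rangle_{\mathcal{U}(\Lambda)}$, and multiplying through by $\mathcal{I}(\Lambda) \supseteq \mathcal{U}(\Lambda)$ gives $\Fit_\Lambda(\Lambda/I)\cdot\mathcal{I}(\Lambda) = \nr(\alpha)\cdot\mathcal{I}(\Lambda)$ directly. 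Here I should be careful that $\Fit_\Lambda(\Lambda/I)$ in the statement refers to the Fitting invariant of this quadratic presentation, which is well-defined independently of the choice of quadratic presentation by the analogue of \cite[Theorem 3.2]{MR2609173}.

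For part (iii), I would treat the two cases. When $\Lambda$ is a direct sum of matrix rings over commutative rings, reduce componentwise to $\Lambda = M_{n\times n}(R)$; then by Proposition \ref{prop:matrix-over-comm-consistent} we have $\Fit^{\max}_\Lambda(\Lambda/I) = \Fit_\Lambda(\Lambda/I) = \Fit_R(e_{11}(\Lambda/I))$, and Proposition \ref{prop:matrix-ring-ideal-det} identifies this with $\langle \det(x) \mid x \in I \rangle_R$. Since $\det = \nr$ here and $R = \zeta(\Lambda)$, this is exactly $\langle \nr(x) \mid x \in I\rangle_{\zeta(\Lambda)}$. When $\Lambda$ is nice over a complete DVR, again reduce to $A$ simple (reduced norm is componentwise) and hence to $\Lambda$ either a matrix ring over a commutative ring (done) or a maximal order. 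In the maximal-order case, use that $\mathcal{I}(\Lambda) = \zeta(\Lambda)$ (Proposition \ref{prop:nice-equal}), so part (i) already gives $\supseteq$. For the reverse inclusion, Lemma \ref{lem:dvr-implies-quadratic} shows $\Lambda/I$ either has trivial Fitting invariant (when $F\otimes M \neq 0$, in which case $\nr(x) = 0$ for all $x \in I$ too, since $I$ generates a proper left ideal... actually here one needs $F\otimes_{\mathfrak o}(\Lambda/I)\ne 0$ forces $I$ to lie in a proper two-sided locus) or admits a quadratic presentation $\Lambda^k \to \Lambda^k \onto \Lambda/I$; then $\Fit^{\max}_\Lambda(\Lambda/I) = \Fit_\Lambda(\Lambda/I)$ is generated by $\nr$ of the $k\times k$ blocks, and I must show each such generator lies in $\langle \nr(x) \mid x\in I\rangle_{\zeta(\Lambda)}$.

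\textbf{The main obstacle} will be this last containment in the maximal-order case: showing that $\nr$ of an arbitrary $k\times k$ submatrix of a presentation matrix of $\Lambda/I$ can be written in terms of $\nr(x)$ for single elements $x \in I$. The natural approach is to use the structure theory of finitely generated torsion modules over a maximal order over a complete DVR — every such $M$ with $F\otimes M = 0$ decomposes, and $\Lambda/I$ with $I$ a left ideal means $M$ is cyclic, so by the elementary divisor theory for maximal orders one can diagonalize the presentation. Concretely, $I$ being a left ideal with $\Lambda/I$ torsion means $I$ is a full left ideal, hence (maximal order over complete DVR) $I \cong \Lambda$ as left modules, so $I = \Lambda\alpha$ is actually principal, reducing (iii) in this case to (ii) together with $\langle \nr(x)\mid x\in\Lambda\alpha\rangle_{\zeta(\Lambda)} = \nr(\alpha)\zeta(\Lambda)$ — the latter because $\nr(\lambda\alpha) = \nr(\lambda)\nr(\alpha)$ and $\nr(\Lambda)$ generates $\zeta(\Lambda) = \mathcal{I}(\Lambda)$ over itself. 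I would verify carefully that $I$ full left ideal in a maximal order over a complete DVR is indeed free of rank one, which follows from hereditariness and the cancellation results cited in the proof of Lemma \ref{lem:dvr-implies-quadratic}; this is the crux that makes the whole argument go through cleanly.
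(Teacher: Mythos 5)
Your proof is correct and follows essentially the same route as the paper's: part (i) via adjoining an arbitrary $x\in I$ to a fixed generating set of $I$ to get a new presentation, part (ii) via the quadratic presentation given by right multiplication by $\alpha$, and part (iii) by splitting into the matrix-ring case (Propositions \ref{prop:matrix-over-comm-consistent} and \ref{prop:matrix-ring-ideal-det}) and the maximal-order case. The paper's proof of the maximal-order case in (iii) is very terse (it simply cites Lemma \ref{lem:dvr-implies-quadratic} and part (ii)); you have correctly unpacked it: when $F\otimes_{\mathfrak o}(\Lambda/I)=0$, the proof of Lemma \ref{lem:dvr-implies-quadratic} with $k=1$ and $\pi:\Lambda\onto\Lambda/I$ shows $I=\ker\pi\simeq\Lambda$ is principal, so part (ii) together with $\mathcal{I}(\Lambda)=\zeta(\Lambda)$ finishes the job. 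Your observation that $\langle\nr(\lambda\alpha)\mid\lambda\in\Lambda\rangle_{\zeta(\Lambda)}=\nr(\alpha)\,\zeta(\Lambda)$ is exactly what is needed. For the $F\otimes_{\mathfrak o}(\Lambda/I)\ne 0$ subcase, your reasoning (every $x\in I$ lies in a proper left ideal of the simple algebra $A$, hence is a non-unit, hence $\nr(x)=0$) is valid, though there is a shorter route you could also have used: part (i) already gives $\langle\nr(x)\mid x\in I\rangle_{\zeta(\Lambda)}\subset\Fit^{\max}_\Lambda(\Lambda/I)$, and Lemma \ref{lem:dvr-implies-quadratic} says the right-hand side is $0$, so both sides vanish with no further argument. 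The phrase ``proper two-sided locus'' in your proposal is a slip (it should simply be ``a proper left ideal of $A$''), but the underlying idea is sound and the proof goes through.
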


\begin{proof}
(i) Let $\{ x_{1}, \ldots, x_{r-1} \}$ be a fixed set of generators of $I$ and let $x_{r}$ be an arbitrary element of $I$.
Then there exists a presentation of the form
\[
\Lambda^{r} \stackrel{h}{\longrightarrow} \Lambda \onto \Lambda/I,
\]
where $(x_{1}, \ldots, x_{r})^{t}  \in M_{r \times 1}(\Lambda)$ is the matrix representing $h$.
Then we have $\nr(x_{r}) \in \Fit_{\Lambda}(h) \subset \Fit_{\Lambda}^{\max}(\Lambda/I)$.
Since $x_{r}$ was arbitrary, this gives the desired containment.

(ii) Let $\Lambda \stackrel{h}{\longrightarrow} \Lambda \onto \Lambda/I$ be the presentation
given by right multiplication by $\alpha$. Then since $h$ is a quadratic presentation we have
\[
\Fit_{\Lambda}^{\max}(\Lambda/I) = \Fit_{\Lambda}(h) \cdot  \mathcal{I}(\Lambda) = \nr(\alpha) \cdot \mathcal{I}(\Lambda),
\]
where the first equality follows from Theorem \ref{thm:fitt-thm} and equation \eqref{eqn:Fitt-vs-Fit}.

(iii) If $\Lambda$ is a direct sum of matrix rings over commutative rings then the result follows
from Proposition \ref{prop:matrix-ring-ideal-det}.
Thus it remains to consider the case where $\Lambda$ is a maximal order over a complete discrete
valuation ring; the result follows from Lemma \ref{lem:dvr-implies-quadratic} and part (ii) above.
\end{proof}

\section{Annihilation and change of order}\label{sec:ann-and-change-of-order}

\subsection{Conductors and annihilation}
We give annihilation results in terms of conductors.
For background material on conductors, we refer the reader to \cite[\S 27]{MR632548}.

Let $\Lambda \subset \Gamma \subset \Lambda'$ be Fitting orders in an algebra $A$,
such that $\Lambda'$ is a maximal order over the relevant Fitting domain.
Let $M$ be a finitely generated $\Lambda$-module.

\begin{definition}
We define
\begin{eqnarray*}
(\Gamma:\Lambda)_{l} &=& \{ x \in \Gamma \mid  x\Gamma \subset \Lambda\}
= \textrm{largest right $\Gamma$-module in $\Lambda$},\\
(\Gamma:\Lambda)_{r} &=& \{ x \in \Gamma \mid  \Gamma x \subset \Lambda\}
= \textrm{largest left $\Gamma$-module in $\Lambda$},
\end{eqnarray*}
and say that $(\Gamma:\Lambda)_{l}$ (resp.\ $(\Gamma:\Lambda)_{r}$) is the \emph{left} (resp.\ \emph{right})
\emph{conductor of $\Gamma$ into $\Lambda$}.
We define the \emph{central conductor} of $\Gamma$ over $\Lambda$ to be
\[
\mathcal{F}(\Gamma,\Lambda) = \{ x \in \zeta(\Gamma) \mid x\Gamma \subset \Lambda \} =
\zeta(\Gamma) \cap (\Gamma:\Lambda)_{l} = \zeta(\Gamma) \cap (\Gamma:\Lambda)_{r}.
\]
\end{definition}

\begin{prop}\label{prop:conductor-annihilation}
If $\mathcal{H}(\Gamma)=\zeta(\Gamma)$ then
$\mathcal{F}(\Gamma,\Lambda) \subset \mathcal{H}(\Lambda)$ and so
we have
\[
\mathcal{F}(\Gamma,\Lambda) \cdot \Fit_{\Lambda}^{\max}(M) \subset \Ann_{\zeta(\Lambda)}(M).
\]
\end{prop}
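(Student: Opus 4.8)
The plan is to reduce the annihilation statement to Theorem~\ref{thm:fitt-ann} by showing the containment $\mathcal{F}(\Gamma,\Lambda) \subset \mathcal{H}(\Lambda)$. Once this is established, the displayed inclusion is immediate: since $\mathcal{F}(\Gamma,\Lambda) \subset \mathcal{H}(\Lambda)$ we have $\mathcal{F}(\Gamma,\Lambda) \cdot \Fit_{\Lambda}^{\max}(M) \subset \mathcal{H}(\Lambda) \cdot \Fit_{\Lambda}^{\max}(M) \subset \Ann_{\zeta(\Lambda)}(M)$, the last step being precisely Theorem~\ref{thm:fitt-ann}.

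So the work is entirely in proving $\mathcal{F}(\Gamma,\Lambda) \subset \mathcal{H}(\Lambda)$. Recall $\mathcal{H}(\Lambda) = \{ x \in \zeta(\Lambda) \mid xH^{\ast} \in M_{b \times b}(\Lambda) \textrm{ for all } H \in M_{b \times b}(\Lambda),\ b \in \N \}$, where $H^{\ast}$ is the generalised adjoint of \S\ref{subsec:adjoints}, defined via the reduced characteristic polynomials of the components $H_i = He_i$ and satisfying $H^{\ast} \in M_{b\times b}(\Lambda')$ by Lemma~\ref{lem:ast}. Fix $x \in \mathcal{F}(\Gamma,\Lambda)$. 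First I would note that $x \in \zeta(\Gamma) \subset \zeta(\Lambda')$ and also $x\Gamma \subset \Lambda$, hence $x \in \Lambda \cap \zeta(\Gamma)$; combined with $x \in \zeta(\Gamma)$ one checks $x \in \zeta(\Lambda)$ (indeed $x$ centralises $\Lambda \subset \Gamma$ and lies in $\Lambda$). Next, fix $b \in \N$ and $H \in M_{b\times b}(\Lambda) \subset M_{b\times b}(\Gamma)$. The key observation is that the generalised adjoint $H^{\ast}$ is built from $H$ using only the reduced characteristic polynomials, whose coefficients lie in $\zeta(\Lambda') \supset \zeta(\Gamma)$; since $\Gamma$ is a Fitting order, $H \in M_{b\times b}(\Gamma)$ gives $H^{\ast} \in M_{b\times b}(\Gamma)$ by the same computation as in Lemma~\ref{lem:ast} applied to $\Gamma$ (the coefficients $\alpha_{ij}$ lie in $\mathfrak{o}'_i$, but the powers $H_i^{j-1}$ lie in $M_{b\times b}(\Gamma e_i)$). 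Actually the cleaner route: since $\mathcal{H}(\Gamma) = \zeta(\Gamma)$ by hypothesis and $1 \in \zeta(\Gamma)$, the definition of $\mathcal{H}(\Gamma)$ gives directly $H^{\ast} \in M_{b\times b}(\Gamma)$ for every $H \in M_{b\times b}(\Gamma)$, in particular for $H \in M_{b\times b}(\Lambda)$. Then $x H^{\ast} \in M_{b\times b}(x\Gamma) \subset M_{b\times b}(\Lambda)$ because $x\Gamma \subset \Lambda$. Since $b$ and $H$ were arbitrary and $x \in \zeta(\Lambda)$, this shows $x \in \mathcal{H}(\Lambda)$, as desired.

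I expect the main obstacle to be the verification that $H^{\ast} \in M_{b\times b}(\Gamma)$ when $H$ has entries in $\Gamma$ — but this is exactly what the hypothesis $\mathcal{H}(\Gamma) = \zeta(\Gamma)$ delivers, since $1 \in \mathcal{H}(\Gamma)$ forces $H^{\ast} = 1\cdot H^{\ast} \in M_{b\times b}(\Gamma)$ for all such $H$. The only genuinely delicate point is making sure the $H^{\ast}$ computed inside $M_{b\times b}(\Gamma)$ agrees with the $H^{\ast}$ that appears in the definition of $\mathcal{H}(\Lambda)$; this is fine because the generalised adjoint depends only on the ambient maximal order $\Lambda'$ and the matrix $H$ itself, not on which intermediate order one regards $H$ as living in — a point that follows from the uniqueness remark after Lemma~\ref{lem:ast} together with the fact that $\Lambda$ and $\Gamma$ sit inside the same $\Lambda'$. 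I would spell out this compatibility explicitly, then conclude.
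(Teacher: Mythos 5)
Your proof is correct and follows essentially the same argument as the paper: show $\mathcal{F}(\Gamma,\Lambda) \subset \mathcal{H}(\Lambda)$ by using $1 \in \mathcal{H}(\Gamma)$ to get $H^{\ast} \in M_{b\times b}(\Gamma)$ and then $xH^{\ast} \in M_{b\times b}(\Lambda)$ via $x\Gamma \subset \Lambda$, then invoke Theorem~\ref{thm:fitt-ann}. Your additional remarks — that $x$ indeed lies in $\zeta(\Lambda)$ and that $H^{\ast}$ is unambiguous across intermediate orders since it is defined via the components of $H$ in $A$ — are correct and make explicit two points the paper leaves implicit.
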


\begin{proof}
Let $x \in \mathcal{F}(\Gamma,\Lambda)$.
Fix $b \in \N$ and let $H \in M_{b \times b}(\Lambda)$.
Then $H \in M_{b \times b}(\Gamma)$ so $H^{\ast} \in M_{b \times b}(\Gamma)$
since $1 \in \zeta(\Gamma)=\mathcal{H}(\Gamma)$ by hypothesis.
By definition of $\mathcal{F}(\Gamma,\Lambda)$ we have $x H^{\ast} \in M_{b \times b}(\Lambda)$.
Since $b$ and $H$ were arbitrary, we have shown that $x \in \mathcal{H}(\Lambda)$.
Therefore $\mathcal{F}(\Gamma,\Lambda) \subset \mathcal{H}(\Lambda)$
and the result now follows from Theorem \ref{thm:fitt-ann}.
\end{proof}

\begin{corollary}
We have $\mathcal{F}(\Lambda', \Lambda) \cdot \Fit^{\max}_{\Lambda}(M) \subset \Ann_{\zeta(\Lambda)}(M)$.
\end{corollary}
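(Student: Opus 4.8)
The plan is to deduce this immediately from Proposition \ref{prop:conductor-annihilation} by taking $\Gamma = \Lambda'$. The chain $\Lambda \subset \Lambda' \subset \Lambda'$ trivially satisfies the hypothesis of that proposition (with the maximal order at the top being $\Lambda'$ itself), so the only thing to check is that $\mathcal{H}(\Lambda') = \zeta(\Lambda')$. This is where I would invoke the earlier observation that $\mathcal{H}(\Lambda) = \zeta(\Lambda)$ whenever $\Lambda$ is maximal; concretely, it follows from Corollary \ref{cor:intersection-nice-Fitt-orders} since $\zeta(\Lambda')$ is a maximal order (being a product of the rings $\mathfrak{o}_i'$), or alternatively directly from the definitions in \S\ref{subsec:adjoints}, since for a maximal order each component $H_i^{\ast}$ of the generalised adjoint lies in $M_{n\times n}(\Lambda_i')$ by construction.

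Given $\mathcal{H}(\Lambda') = \zeta(\Lambda')$, Proposition \ref{prop:conductor-annihilation} with $\Gamma = \Lambda'$ yields at once that $\mathcal{F}(\Lambda',\Lambda) \subset \mathcal{H}(\Lambda)$ and hence
\[
\mathcal{F}(\Lambda',\Lambda) \cdot \Fit^{\max}_{\Lambda}(M) \subset \Ann_{\zeta(\Lambda)}(M),
\]
as required. So the entire argument is a one-line specialisation, and there is essentially no obstacle beyond recording the instance $\mathcal{H}(\Lambda') = \zeta(\Lambda')$ used to apply the proposition.

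If one wanted to make the statement self-contained rather than citing Proposition \ref{prop:conductor-annihilation}, the steps would be: (1) take $x \in \mathcal{F}(\Lambda',\Lambda)$ and $H \in M_{b\times b}(\Lambda) \subset M_{b\times b}(\Lambda')$; (2) observe $H^{\ast} \in M_{b\times b}(\Lambda')$ by the maximality of $\Lambda'$; (3) note $xH^{\ast} \in M_{b\times b}(\Lambda)$ by definition of the central conductor, so $x \in \mathcal{H}(\Lambda)$; (4) conclude via Theorem \ref{thm:fitt-ann}. But since this merely re-proves the proposition, the cleaner route is the direct citation above.
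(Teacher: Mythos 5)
Your proposal is correct and matches the paper's (implicit) argument exactly: the corollary is the specialisation $\Gamma = \Lambda'$ of Proposition \ref{prop:conductor-annihilation}, and the only hypothesis to verify is $\mathcal{H}(\Lambda') = \zeta(\Lambda')$, which holds because a maximal order is a nice Fitting order (Proposition \ref{prop:nice-equal}) — your alternative justifications via Corollary \ref{cor:intersection-nice-Fitt-orders} or directly from the definition of $H^{\ast}$ are equally valid.
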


In fact we can improve this slightly:
\begin{prop} \label{prop:conductor-of-centres-annihilates}
We have $\mathcal{F}(\zeta(\Lambda'), \zeta(\Lambda)) \subset \mathcal{H}(\Lambda)$ and so
\[
\mathcal{F}(\zeta(\Lambda'), \zeta(\Lambda)) \cdot \Fit_{\Lambda}^{\max}(M) \subset \Ann_{\zeta(\Lambda)}(M).
\]
\end{prop}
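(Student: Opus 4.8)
The plan is to prove the inclusion $\mathcal{F}(\zeta(\Lambda'),\zeta(\Lambda)) \subset \mathcal{H}(\Lambda)$; the displayed annihilation statement then follows immediately from Theorem \ref{thm:fitt-ann}, exactly as in Proposition \ref{prop:conductor-annihilation}. As a preliminary remark I would note that, since $\Lambda$ and $\Lambda'$ are $\mathfrak{o}$-orders in $A$ (so $F\Lambda = A = F\Lambda'$), one has $\zeta(\Lambda) = \Lambda \cap \zeta(A)$ and $\zeta(\Lambda') = \Lambda' \cap \zeta(A)$; hence $\zeta(\Lambda) \subset \zeta(\Lambda')$ and the central conductor on the left is meaningful (the definition of $\mathcal{F}$ applies with $\zeta(A)$, $\zeta(\Lambda)$, $\zeta(\Lambda')$ in place of $A$, $\Lambda$, $\Lambda'$).

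The crucial ingredient is an expansion of the generalised adjoint matrix with \emph{central} coefficients. Fix $b \in \N$ and $H \in M_{b\times b}(\Lambda)$, and recall from \S\ref{subsec:adjoints} that $H = \sum_{i=1}^{t} H_{i}$ with $H_{i} = He_{i}$ and $H_{i}^{\ast} = (-1)^{m_{i}+1}\sum_{j=1}^{m_{i}} \alpha_{ij}H_{i}^{j-1}$, where $\alpha_{ij} \in \mathfrak{o}_{i}' = \zeta(\Lambda_{i}')$. Since each $e_{i}$ is a central idempotent, $H_{i}^{j-1} = H^{j-1}e_{i}$ for $j \geq 1$ (with $H^{0} = 1_{b\times b}$), and commuting the idempotents past these powers gives
\[
H^{\ast} = \sum_{k \geq 0} \beta_{k} H^{k}, \qquad \beta_{k} := \sum_{i=1}^{t} (-1)^{m_{i}+1}\alpha_{i,k+1}e_{i},
\]
a finite sum in which we set $\alpha_{ij} = 0$ for $j > m_{i}$, and where $\beta_{k} \in \zeta(\Lambda')$ because $\alpha_{i,k+1}e_{i} \in \mathfrak{o}_{i}' e_{i} = \zeta(\Lambda_{i}')$.

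Now I would take $x \in \mathcal{F}(\zeta(\Lambda'),\zeta(\Lambda))$. Applying the defining condition $x\zeta(\Lambda') \subset \zeta(\Lambda)$ to $1 \in \zeta(\Lambda')$ shows $x \in \zeta(\Lambda)$, and applying it to each $\beta_{k}$ shows $x\beta_{k} \in \zeta(\Lambda)$. Hence $xH^{\ast} = \sum_{k} (x\beta_{k})H^{k}$ lies in $M_{b\times b}(\Lambda)$, since each $x\beta_{k}$ is a central element of $\Lambda$ and each $H^{k} \in M_{b\times b}(\Lambda)$. As $b$ and $H$ were arbitrary, the definition in \S\ref{subsec:adjoints} gives $x \in \mathcal{H}(\Lambda)$, so $\mathcal{F}(\zeta(\Lambda'),\zeta(\Lambda)) \subset \mathcal{H}(\Lambda)$; combining this with Theorem \ref{thm:fitt-ann} yields $\mathcal{F}(\zeta(\Lambda'),\zeta(\Lambda)) \cdot \Fit_{\Lambda}^{\max}(M) \subset \mathcal{H}(\Lambda) \cdot \Fit_{\Lambda}^{\max}(M) \subset \Ann_{\zeta(\Lambda)}(M)$.

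The only place requiring care is the adjoint expansion in the second step: one must verify that the coefficients $\beta_{k}$ genuinely lie in $\zeta(\Lambda')$ — which rests on $\alpha_{ij} \in \mathfrak{o}_{i}'$ together with the centrality of the $e_{i}$ — and that the idempotents may be pulled past the powers $H^{j-1}$. This is precisely the point giving the improvement over Proposition \ref{prop:conductor-annihilation}: there one needs the hypothesis $\mathcal{H}(\Gamma) = \zeta(\Gamma)$ merely to know $H^{\ast} \in M_{b\times b}(\Gamma)$, whereas here $H^{\ast}$ automatically has coefficients central in $\Lambda'$ and powers in $M_{b\times b}(\Lambda)$, so it already suffices that $x$ carry $\zeta(\Lambda')$ into $\zeta(\Lambda)$.
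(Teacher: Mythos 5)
Your proposal is correct and takes essentially the same approach as the paper: both reorganise the generalised adjoint $H^{\ast} = \sum_{i} (-1)^{m_{i}+1}\sum_{j}\alpha_{ij}H_{i}^{j-1}$ by pulling the central idempotents $e_{i}$ past the powers of $H$ and collecting coefficients into elements $\lambda_{j}' \in \zeta(\Lambda')$ (your $\beta_{k}$, with the index shifted by one), then observe that $x \in \mathcal{F}(\zeta(\Lambda'),\zeta(\Lambda))$ carries each such coefficient into $\zeta(\Lambda)$, giving $xH^{\ast} \in M_{b\times b}(\Lambda)$. Your explicit remark that applying the conductor condition to $1$ yields $x \in \zeta(\Lambda)$ is a small welcome clarification, since $\mathcal{H}(\Lambda)$ is by definition a subset of $\zeta(\Lambda)$.
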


\begin{proof}
Let $n \in \N$ and let $H \in M_{n \times n}(\Lambda)$.
Then recalling the notation of \S \ref{subsec:nr} and \S \ref{subsec:adjoints},
the generalised adjoint matrix $H^{\ast}$ was defined to be
\[
H^{\ast} = \sum_{i=1}^{t} (-1)^{m_{i}-1} \sum_{j=1}^{m_{i}} \alpha_{ij} H_{i}^{j-1},
\]
where $m_{i} = n_{i} \cdot s_{i} \cdot n \in \N$, $H_{i} = H e_{i}$ and $\alpha_{ij} \in \mathfrak{o}_{i}'$,
$1 \leq i \leq t$, $1 \leq j \leq m_{i}$.
We put $m = \max_{1\leq i \leq t} (m_{i})$ and for $1 \leq i \leq t$, $1 \leq j \leq m$ we define
\[
    \tilde \alpha_{ij} = \left\{ \begin{array}{lll}
    \alpha_{ij} & \mbox{ if } & j \leq m_i\\
    0 & \mbox{ if } & j > m_i.
    \end{array} \right.
\]
Then we may write
\[
H^{\ast} = \sum_{j = 1}^{m} H^{j-1} \sum_{i=1}^{t} (-1)^{m_{i}+1} \tilde \alpha_{ij} e_{i} = \sum_{j = 1}^{m} H^{j-1} \cdot \lambda_{j}',
\]
where $\lambda_{j}' = \sum_{i=1}^{t} (-1)^{m_{i}+1} \tilde{\alpha}_{ij} e_{i}$ belongs to $\oplus_{i=1}^{t}
\mathfrak{o}_{i}' = \zeta(\Lambda')$.
Now it is clear that for any $x \in \mathcal{F}(\zeta(\Lambda'), \zeta(\Lambda))$ we have
\[
    x \cdot H^{\ast} = \sum_{j = 1}^m H^{j-1} \cdot x \cdot \lambda_j' \in M_{n \times n}(\Lambda)
\]
as desired.
\end{proof}

\begin{remark}\label{rmk:bounds-on-mathcalH}
As noted in \S \ref{subsec:adjoints} we have
$\mathcal{I}(\Lambda) \cdot \mathcal{H}(\Lambda) \subset \zeta(\Lambda)$, and so
$\mathcal{H}(\Lambda) \subset \mathcal{F}(\mathcal{I}(\Lambda), \zeta(\Lambda))$. In particular,
if $\mathcal{I}(\Lambda) = \zeta(\Lambda')$ then
$\mathcal{H}(\Lambda) = \mathcal{F}(\zeta(\Lambda'), \zeta(\Lambda))$ by Proposition \ref{prop:conductor-of-centres-annihilates}.
\end{remark}

\begin{remark}\label{rmk:arith-more-explicit}
As mentioned in the introduction, the motivation behind the theory of noncommutative Fitting invariants 
comes from arithmetic.
In \cite{derivatives-Artin-L} and \cite{organising} important arithmetic annihilation results and conjectures are given.
Let $p$ be prime and $G$ be a finite group;
then $\mathcal{A}_{p}(G)$ in \cite[\S 2.1.2]{organising} is defined to be equal to $\mathcal{H}(\Z_{p}[G])$.
Hence Proposition \ref{prop:p-not-div-comm-finite} shows that
$\mathcal{A}_{p}(G)=\zeta(\Z_{p}[G])$ in the case $p \nmid |G'|$ and
Proposition \ref{prop:conductor-annihilation} can be used to compute a subset of
$\mathcal{A}_{p}(G)$ otherwise.
Thus several of the annihilation results of \cite{organising} can be made more explicit.
Similar remarks apply to $\mathcal{A}(R[G])$ in \cite[\S 2.3]{derivatives-Artin-L} and
we expect our results to apply in many other situations.
\end{remark}

\subsection{Change of order}
Let $\Lambda \subset \Gamma$ be Fitting orders in an algebra $A$, and
let $M$ be a finitely generated $\Lambda$-module.
We compare the annihilators and maximal Fitting invariants of $M$ and $\Gamma \otimes_{\Lambda} M$
under certain conditions.

\begin{prop}\label{prop:ann-conductor}
We have
$
\mathcal{F}(\Gamma, \Lambda) \cdot \Ann_{\zeta(\Gamma)}(\Gamma \otimes_{\Lambda}M)
\subset \Ann_{\zeta(\Lambda)}(M).
$
\end{prop}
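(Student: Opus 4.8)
The plan is to interpret multiplication by a central-conductor element as a map \emph{out of} $\Gamma \otimes_{\Lambda}M$ with target $M$, and then to feed in the annihilation hypothesis on $\Gamma \otimes_{\Lambda}M$.

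First I would fix $x \in \mathcal{F}(\Gamma,\Lambda)$ and $y \in \Ann_{\zeta(\Gamma)}(\Gamma \otimes_{\Lambda}M)$ and record that $xy \in \zeta(\Lambda)$. Indeed $xy \in \zeta(\Gamma)$ since both factors are central in $\Gamma$, and $xy = x \cdot y \in x\Gamma \subseteq \Lambda$ by the definition of $\mathcal{F}(\Gamma,\Lambda)$; an element of $\Lambda$ lying in $\zeta(\Gamma)$ centralises $\Lambda \subseteq \Gamma$ and hence lies in $\zeta(\Lambda)$. (The same observation shows $\mathcal{F}(\Gamma,\Lambda) \subseteq \zeta(\Lambda)$.)

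Next, since $x$ is central in $\Gamma$ and $x\Gamma \subseteq \Lambda$, left multiplication by $x$ is a homomorphism of $(\Lambda,\Lambda)$-bimodules $\mu_{x}: \Gamma \to \Lambda$, $\gamma \mapsto x\gamma$ (right $\Lambda$-linearity is just associativity, and left $\Lambda$-linearity uses the centrality of $x$). Applying $- \otimes_{\Lambda}M$ and composing with the canonical isomorphism $\Lambda \otimes_{\Lambda}M \simeq M$ produces an additive map
\[
\Psi: \Gamma \otimes_{\Lambda}M \longrightarrow M, \qquad \gamma \otimes m \mapsto (x\gamma)m,
\]
which makes sense precisely because $x\gamma \in \Lambda$. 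Now fix $m \in M$. As $y$ annihilates all of $\Gamma \otimes_{\Lambda}M$, we have $y \cdot (1 \otimes m) = y \otimes m = 0$ in $\Gamma \otimes_{\Lambda}M$, and applying $\Psi$ gives $0 = \Psi(y \otimes m) = (xy)m$. Since $m$ was arbitrary and $xy \in \zeta(\Lambda)$, we conclude $xy \in \Ann_{\zeta(\Lambda)}(M)$, which is exactly the asserted containment.

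There is no real obstacle here; the argument is a short diagram chase. The only points requiring a moment's care are verifying that $\mu_{x}$ is right $\Lambda$-linear so that tensoring over $\Lambda$ is legitimate, and checking that the product $xy$ genuinely lands in $\zeta(\Lambda)$ (and not merely in $\zeta(\Gamma)$) so that the statement $xy \in \Ann_{\zeta(\Lambda)}(M)$ even makes sense — both are immediate from $x \in \zeta(\Gamma)$ together with $x\Gamma \subseteq \Lambda$. Note in particular that we never need the natural map $M \to \Gamma \otimes_{\Lambda}M$ to be injective: the loss of information there is harmless because the hypothesis on $y$ is applied before returning to $M$ via $\Psi$.
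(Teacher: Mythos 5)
Your proof is correct, and it takes a genuinely different route from the paper's. The paper fixes a presentation $M \simeq \Lambda^{r}/N$, transfers the annihilation condition on $\Gamma\otimes_{\Lambda}M$ to a condition on $\Gamma^{r}/\im(1\otimes\iota)$, and then does an element chase with $w \in \mathcal{F}(\Gamma,\Lambda)$ acting on explicit tensors $\sum a_{i}\otimes\iota(b_{i})$. You instead observe that, for $x\in\mathcal{F}(\Gamma,\Lambda)$, left multiplication by $x$ defines a $(\Lambda,\Lambda)$-bimodule map $\mu_{x}:\Gamma\to\Lambda$, so that $\mu_{x}\otimes_{\Lambda}M$ together with $\Lambda\otimes_{\Lambda}M\simeq M$ gives an additive map $\Psi:\Gamma\otimes_{\Lambda}M\to M$ with $\Psi(\gamma\otimes m)=(x\gamma)m$; feeding $y\otimes m=y\cdot(1\otimes m)=0$ into $\Psi$ then kills $(xy)m$. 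This buys you two things: it is presentation-free (so the hypothesis that $M$ be finitely generated is never used), and it isolates exactly why the lack of injectivity of $M\to\Gamma\otimes_{\Lambda}M$ is irrelevant — you only ever map \emph{out} of $\Gamma\otimes_{\Lambda}M$. The paper's version is more hands-on with elements of a free cover, which fits the computational style of the surrounding sections, but your functorial formulation is tighter. The one preliminary check you handle correctly and that the paper leaves implicit is that $xy$ actually lies in $\zeta(\Lambda)$ and not merely $\zeta(\Gamma)\cap\Lambda$; your observation that any element of $\Lambda\cap\zeta(\Gamma)$ centralises $\Lambda$ settles this cleanly.
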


\begin{proof}
Since $M$ is finitely generated there exists $r \in \N$ and a $\Lambda$-submodule $N$ of $\Lambda^{r}$
such that $M \simeq \Lambda^{r}/N$, i.e.
$N \stackrel{\iota}\hookrightarrow \Lambda^{r} \twoheadrightarrow M$ is short exact where $\iota$ denotes the inclusion map. The functor $\Gamma \otimes_{\Lambda} - $ is right exact and so
$\Gamma \otimes_{\Lambda} M \simeq  (\Gamma \otimes_{\Lambda }\Lambda^{r}) / \im (1 \otimes \iota) \simeq \Gamma^{r} / \im (1 \otimes \iota)$. Hence we have
\begin{eqnarray*}
\Ann_{\zeta(\Lambda)}(M)
&=& \{ x \in \zeta(\Lambda) \mid x \cdot M = 0 \} = \{ x \in \zeta(\Lambda) \mid x \cdot \Lambda^{r} \subset N \},
\textrm{ and}\\
\Ann_{\zeta(\Gamma)}(\Gamma \otimes_{\Lambda} M)
&=& \{ y \in \zeta(\Gamma) \mid y \cdot (\Gamma \otimes_{\Lambda} M) = 0 \} = \{ y \in \zeta(\Gamma) \mid y \cdot \Gamma^{r} \subset \im(1 \otimes \iota) \}.
\end{eqnarray*}
Let $y \in \Ann_{\zeta(\Gamma)}(\Gamma \otimes_{\Lambda} M)$ and let $z \in \Lambda^{r}$. Then since $z \in \Gamma^{r}$ we have $yz \in \im(1 \otimes \iota)$, and so
there exists $\sum_{i=1}^{s} a_{i} \otimes b_{i} \in \Gamma \otimes_{\Lambda} N$ such that
\[
\textstyle{
yz = (1 \otimes \iota)(\sum_{i=1}^{s} a_{i} \otimes b_{i}) = \sum_{i=1}^{s} a_{i} \otimes \iota(b_{i})
\in \Gamma \otimes_{\Lambda} \Lambda^{r} \simeq \Gamma^{r}.}
\]
Now let $w \in \mathcal{F}(\Gamma, \Lambda)$.
Then since $wa_{i} \in \Lambda$ for each $i$, we have
\[
\textstyle{
wyz = w(\sum_{i=1}^{s} a_{i} \otimes \iota(b_{i})) = \sum_{i=1}^{s} wa_{i} \otimes \iota(b_{i})
= 1 \otimes (\sum_{i=1}^{s} wa_{i}\iota(b_{i})) }
\in  \Gamma \otimes_{\Lambda} \Lambda^{r} \simeq \Gamma^{r}.
\]
But $\sum_{i=1}^{s} wa_{i} \iota(b_{i}) \in N$ so under the identification
$\Gamma \otimes_{\Lambda} \Lambda^{r} \simeq \Gamma^{r}$, $a \otimes b \mapsto a \cdot b$ we have
$wyz \in N$.
Since $z \in \Lambda^{r}$ was
arbitrary, we have that $wy \in \Ann_{\zeta(\Lambda)}(M)$, as desired.
\end{proof}

\begin{corollary}\label{cor:ann-conductor}
If $\mathcal{H}(\Gamma)=\zeta(\Gamma)$ then
$
\mathcal{F}(\Gamma,\Lambda) \cdot \Fit_{\Gamma}^{\max}(\Gamma \otimes_{\Lambda} M) \subset \Ann_{\zeta(\Lambda)}(M).
$
\end{corollary}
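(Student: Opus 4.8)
The plan is to obtain the corollary as a formal consequence of two results already in hand: Theorem~\ref{thm:fitt-ann}, applied not to $\Lambda$ but to the larger Fitting order $\Gamma$, together with Proposition~\ref{prop:ann-conductor}.

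First I would note that $\Gamma \otimes_{\Lambda} M$ is a finitely generated $\Gamma$-module, since $M$ is finitely generated over $\Lambda$ and $\Gamma$ is noetherian (being a Fitting order in $A$). Because $\Gamma$ is itself a Fitting order, Theorem~\ref{thm:fitt-ann} applies with $\Gamma$ in place of $\Lambda$ and $\Gamma \otimes_{\Lambda} M$ in place of $M$, yielding
\[
\mathcal{H}(\Gamma) \cdot \Fit_{\Gamma}^{\max}(\Gamma \otimes_{\Lambda} M) \subset \Ann_{\zeta(\Gamma)}(\Gamma \otimes_{\Lambda} M).
\]
Invoking the hypothesis $\mathcal{H}(\Gamma) = \zeta(\Gamma)$ and using $1 \in \zeta(\Gamma)$, this collapses to the cleaner inclusion $\Fit_{\Gamma}^{\max}(\Gamma \otimes_{\Lambda} M) \subset \Ann_{\zeta(\Gamma)}(\Gamma \otimes_{\Lambda} M)$.

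Next I would multiply both sides by the central conductor $\mathcal{F}(\Gamma,\Lambda)$ — which is legitimate because $\mathcal{F}(\Gamma,\Lambda) \subset \zeta(\Gamma)$ by definition, so the products stay inside $\zeta(\Gamma)$ — and then apply Proposition~\ref{prop:ann-conductor}, which asserts $\mathcal{F}(\Gamma,\Lambda) \cdot \Ann_{\zeta(\Gamma)}(\Gamma \otimes_{\Lambda} M) \subset \Ann_{\zeta(\Lambda)}(M)$. Chaining the two inclusions gives
\[
\mathcal{F}(\Gamma,\Lambda) \cdot \Fit_{\Gamma}^{\max}(\Gamma \otimes_{\Lambda} M) \subset \mathcal{F}(\Gamma,\Lambda) \cdot \Ann_{\zeta(\Gamma)}(\Gamma \otimes_{\Lambda} M) \subset \Ann_{\zeta(\Lambda)}(M),
\]
which is exactly the stated conclusion.

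I do not expect any genuine obstacle: the content of the corollary is entirely carried by the two cited results, and the only points requiring a moment's care are that Theorem~\ref{thm:fitt-ann} is being applied to $\Gamma$ (valid, since $\Gamma$ is a Fitting order in the same algebra $A$) and that $\Gamma \otimes_{\Lambda} M$ is still finitely generated so that the theorem is applicable. Everything else is a straightforward concatenation of inclusions.
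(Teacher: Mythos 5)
Your proof is correct and follows exactly the route the paper takes: apply Theorem~\ref{thm:fitt-ann} with $\Gamma$ in place of $\Lambda$, use the hypothesis $\mathcal{H}(\Gamma)=\zeta(\Gamma)$ to get $\Fit_{\Gamma}^{\max}(\Gamma\otimes_\Lambda M)\subset\Ann_{\zeta(\Gamma)}(\Gamma\otimes_\Lambda M)$, then multiply by $\mathcal{F}(\Gamma,\Lambda)$ and chain with Proposition~\ref{prop:ann-conductor}. You have merely spelled out the concatenation that the paper leaves implicit.
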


\begin{proof}
By Theorem \ref{thm:fitt-ann} we have
$\Fit_{\Gamma}^{\max}(\Gamma \otimes_{\Lambda} M) \subset \Ann_{\zeta(\Gamma)}(\Gamma \otimes_{\Lambda} M)$.
\end{proof}

\begin{prop}\label{prop:fit-conductor}
We have
$
\Fit^{\max}_{\Lambda}(M) \cdot \mathcal{U}(\Gamma) \subset \Fit^{\max}_{\Gamma}(\Gamma \otimes_{\Lambda} M).
$
\end{prop}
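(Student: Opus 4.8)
The plan is to pass to an explicit presentation of $M$ that realises $\Fit^{\max}_{\Lambda}(M)$, tensor it up to $\Gamma$, and observe that the presentation matrix and all of its reduced norms are unchanged by the process.

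First I would choose a finite presentation $\Lambda^{a} \stackrel{h}{\longrightarrow} \Lambda^{b} \onto M$ with $\Fit_{\Lambda}(h) = \Fit^{\max}_{\Lambda}(M)$; such an $h$ exists because, by definition, $\Fit^{\max}_{\Lambda}(M)$ is itself one of the Fitting invariants $\Fit_{\Lambda}(h')$ of $M$. Let $H \in M_{a \times b}(\Lambda)$ be the matrix representing $h$. Since the functor $\Gamma \otimes_{\Lambda} -$ is right exact and sends $\Lambda^{c}$ to $\Gamma^{c}$, applying $\id_{\Gamma} \otimes -$ to $h$ yields a finite presentation $\Gamma^{a} \longrightarrow \Gamma^{b} \onto \Gamma \otimes_{\Lambda} M$ whose matrix with respect to the standard bases is again $H$, now regarded as an element of $M_{a \times b}(\Gamma)$. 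In particular the set $S_{b}(h)$ of $b \times b$ submatrices is literally the same whether $h$ is viewed over $\Lambda$ or over $\Gamma$, and for each $H' \in S_{b}(h)$ the element $\nr(H') = \nr_{A}(H') \in \zeta(A)$ depends only on $A$, not on the order in which $H'$ is considered.

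Next I would compare the two Fitting invariants attached to $h$. If $a < b$ then both $\Fit_{\Lambda}(h)$ and the corresponding invariant over $\Gamma$ are zero and there is nothing to prove, so assume $a \geq b$. Because $\Lambda \subset \Gamma$ we have $\zeta(\Lambda) \subset \zeta(\Gamma)$ and $\Lambda^{\times} \subset \Gamma^{\times}$, whence $\mathcal{U}(\Lambda) \subset \mathcal{U}(\Gamma)$ (recall $\mathcal{U}(\Lambda) = \langle \nr(U) \mid U \in \Lambda^{\times}\rangle_{\zeta(\Lambda)}$ by Remark \ref{rmk:semilocal-nr}). Since $1 \in \mathcal{U}(\Lambda)$ and $\mathcal{U}(\Gamma)$ is a ring, it follows that for any set $X \subset \zeta(A)$ one has $\langle X\rangle_{\mathcal{U}(\Lambda)} \cdot \mathcal{U}(\Gamma) = \langle X\rangle_{\mathcal{U}(\Gamma)}$. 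Applying this with $X = \{ \nr(H') \mid H' \in S_{b}(h)\}$ gives
\[
\Fit_{\Lambda}(h) \cdot \mathcal{U}(\Gamma)
= \langle \nr(H') \mid H' \in S_{b}(h)\rangle_{\mathcal{U}(\Lambda)} \cdot \mathcal{U}(\Gamma)
= \langle \nr(H') \mid H' \in S_{b}(h)\rangle_{\mathcal{U}(\Gamma)}
= \Fit_{\Gamma}(\id_{\Gamma} \otimes h).
\]

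Finally, by definition of $\Fit^{\max}_{\Gamma}$ we have $\Fit_{\Gamma}(\id_{\Gamma} \otimes h) \subset \Fit^{\max}_{\Gamma}(\Gamma \otimes_{\Lambda} M)$, and combining the two displays yields
\[
\Fit^{\max}_{\Lambda}(M) \cdot \mathcal{U}(\Gamma) = \Fit_{\Lambda}(h) \cdot \mathcal{U}(\Gamma) = \Fit_{\Gamma}(\id_{\Gamma} \otimes h) \subset \Fit^{\max}_{\Gamma}(\Gamma \otimes_{\Lambda} M),
\]
as required. I do not expect any serious obstacle here: the only points demanding care are the identification of the presentation matrix after applying $\Gamma \otimes_{\Lambda} -$, the order-independence of $\nr_{A}$, and the inclusion $\mathcal{U}(\Lambda) \subset \mathcal{U}(\Gamma)$, all of which are routine.
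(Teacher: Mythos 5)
Your argument is correct and follows essentially the same route as the paper's: tensor a presentation of $M$ up to $\Gamma$, observe that the presentation matrix (hence $S_{b}(h)$ and the reduced norms of its submatrices) is literally unchanged, and use the inclusion $\mathcal{U}(\Lambda) \subset \mathcal{U}(\Gamma)$ to get $\Fit_{\Lambda}(h)\cdot\mathcal{U}(\Gamma) = \Fit_{\Gamma}(1 \otimes h) \subset \Fit^{\max}_{\Gamma}(\Gamma \otimes_{\Lambda} M)$. The only cosmetic difference is that you begin by fixing a presentation realising $\Fit^{\max}_{\Lambda}(M)$, whereas the paper works with an arbitrary presentation and invokes maximality at the end; the substance is identical.
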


\begin{proof}
Consider a presentation
$\Lambda^{a} \stackrel{h}{\longrightarrow} \Lambda^{b} \twoheadrightarrow M$.
Then by right exactness of $\Gamma \otimes_{\Lambda} - $ we have a presentation
$\Gamma^{a} \stackrel{1 \otimes h}{\longrightarrow} \Gamma^{b} \twoheadrightarrow \Gamma \otimes_{\Lambda} M$.
Note that the matrices in $M_{a \times b}(\Gamma)$ representing  $h$ and $1 \otimes h$ are in fact equal and so $S_{b}(h)=S_{b}(1 \otimes h)$. Now
\begin{eqnarray*}
\Fit_{\Lambda}(h) &=& \langle \nr(H) \mid H \in S_{b}(h) \rangle_{\mathcal{U}(\Lambda)}
\quad \textrm{ and } \\
\Fit_{\Gamma}(1 \otimes h) &=& \langle \nr(H) \mid H \in S_{b}(1 \otimes h) \rangle_{\mathcal{U}(\Gamma)}.
\end{eqnarray*}
We also have $\mathcal{U}(\Lambda) \subset \mathcal{U}(\Gamma)$.
Therefore $\Fit_{\Lambda}(h) \cdot \mathcal{U}(\Gamma) = \Fit_{\Gamma}(1 \otimes h)$
and it follows that
$\Fit_{\Lambda}^{\max}(M) \cdot \mathcal{U}(\Gamma) \subset
\Fit_{\Gamma}^{\max}(\Gamma \otimes_{\Lambda} M)$, as desired.
\end{proof}

\begin{theorem}\label{thm:fit-ann-conductor}
Suppose that $\Gamma$ is nice.
Then we have
\[
\mathcal{F}(\Gamma,\Lambda) \cdot \Fit_{\Lambda}^{\max}(M) \subset
\mathcal{F}(\Gamma,\Lambda) \cdot \Fit_{\Gamma}^{\max}(\Gamma \otimes_{\Lambda} M) \subset \Ann_{\zeta(\Lambda)}(M) .
\]
\end{theorem}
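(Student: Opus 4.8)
The plan is to obtain the theorem as a short combination of Proposition \ref{prop:fit-conductor}, Proposition \ref{prop:nice-equal}, and Corollary \ref{cor:ann-conductor}, all of which are available once $\Gamma$ is assumed nice. No new argument is really required; the work is entirely bookkeeping.

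\textbf{Left-hand inclusion.} First I would record that $\mathcal{F}(\Gamma,\Lambda) \subset \zeta(\Lambda)$: if $x \in \zeta(\Gamma)$ satisfies $x\Gamma \subset \Lambda$ then $x = x\cdot 1 \in \Lambda$, and $x$ commutes with all of $\Gamma \supset \Lambda$, so $x \in \zeta(\Lambda)$. Next, since $\Gamma$ is nice, Proposition \ref{prop:nice-equal} gives $\mathcal{U}(\Gamma) = \zeta(\Gamma)$, and in particular $1 \in \mathcal{U}(\Gamma)$. Proposition \ref{prop:fit-conductor} states $\Fit^{\max}_{\Lambda}(M) \cdot \mathcal{U}(\Gamma) \subset \Fit^{\max}_{\Gamma}(\Gamma \otimes_{\Lambda} M)$; taking the $\mathcal{U}(\Gamma)$-factor equal to $1$ yields the plain containment $\Fit^{\max}_{\Lambda}(M) \subset \Fit^{\max}_{\Gamma}(\Gamma \otimes_{\Lambda} M)$. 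Multiplying this containment through by the $\zeta(\Lambda)$-ideal $\mathcal{F}(\Gamma,\Lambda)$ gives $\mathcal{F}(\Gamma,\Lambda)\cdot\Fit^{\max}_{\Lambda}(M) \subset \mathcal{F}(\Gamma,\Lambda)\cdot\Fit^{\max}_{\Gamma}(\Gamma \otimes_{\Lambda} M)$, which is the first of the two asserted inclusions.

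\textbf{Right-hand inclusion.} Again using that $\Gamma$ is nice, Proposition \ref{prop:nice-equal} gives $\mathcal{H}(\Gamma) = \zeta(\Gamma)$, so the hypothesis of Corollary \ref{cor:ann-conductor} holds for the order $\Gamma$; that corollary then says directly $\mathcal{F}(\Gamma,\Lambda)\cdot\Fit^{\max}_{\Gamma}(\Gamma \otimes_{\Lambda} M) \subset \Ann_{\zeta(\Lambda)}(M)$, which is the second inclusion. Concatenating the two chains finishes the proof.

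\textbf{Expected obstacle.} There is essentially none: every needed statement is already proved in the excerpt. The only points deserving a sentence of care are (a) that $\mathcal{F}(\Gamma,\Lambda)$ really is an ideal of $\zeta(\Lambda)$, so that the outer products in the theorem are meaningful $\zeta(\Lambda)$-modules and the inclusions are preserved under multiplication, and (b) the specialisation $1 \in \mathcal{U}(\Gamma)$ used to pass from Proposition \ref{prop:fit-conductor} to the plain containment $\Fit^{\max}_{\Lambda}(M) \subset \Fit^{\max}_{\Gamma}(\Gamma \otimes_{\Lambda} M)$. Both are immediate from the definitions in \S\ref{subsec:alt-def} and \S\ref{sec:ann-and-change-of-order}.
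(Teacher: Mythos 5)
Your proof is correct and takes essentially the same route as the paper: after invoking Proposition \ref{prop:nice-equal} to get $\mathcal{H}(\Gamma)=\mathcal{U}(\Gamma)=\zeta(\Gamma)$, the first inclusion follows from Proposition \ref{prop:fit-conductor} (with $1\in\mathcal{U}(\Gamma)$) and the second from Corollary \ref{cor:ann-conductor}. The paper's own proof is just a one-line version of the same combination, with your bookkeeping steps (that $\mathcal{F}(\Gamma,\Lambda)\subset\zeta(\Lambda)$ and that multiplying a containment by an ideal preserves it) left implicit.
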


\begin{proof}
If $\Gamma$ is nice then by Proposition \ref{prop:nice-equal} we have
$\zeta(\Gamma)=\mathcal{H}(\Gamma)=\mathcal{U}(\Gamma)$, and so the desired result is now
the combination of Corollary \ref{cor:ann-conductor} and Proposition \ref{prop:fit-conductor}.
\end{proof}

\begin{remark}
Suppose that $\Gamma$ is nice.
If one wishes to compute $\zeta(\Lambda)$-annihilators of $M$ using the central conductor
$\mathcal{F}(\Gamma, \Lambda)$, then Theorem \ref{thm:fit-ann-conductor}
shows one may as well first extend scalars to $\Gamma$, allowing one to take advantage
of the useful properties of maximal Fitting invariants over nice Fitting orders
(indeed, one may also obtain more annihilators this way).
\end{remark}

\subsection{Conductors in the group ring case}\label{subsec:conductors-group-ring-case}
Let $G$ be a finite group and let $\mathfrak{o}$ be a complete discrete valuation ring
with ring of fractions $F$. Suppose that $|G|$ is invertible in $F$.
Let $\Gamma$ be a nice Fitting order containing the group ring $\Lambda :=\mathfrak{o}[G]$.
We may write
\[
\Gamma = \bigoplus_{i=1}^{k} \Gamma_{i},
\]
where $\Gamma_{i}$ is isomorphic to either a matrix ring
$M_{n_{i} \times n_{i}}(\mathfrak{o}_{i})$ over a commutative ring $\mathfrak{o}_{i}$
(not necessarily integrally closed) or
a matrix ring $M_{n_{i} \times n_{i}}(\mathfrak{o}_{D_{i}})$ over the valuation ring
$\mathfrak{o}_{D_{i}}$ of a skewfield $D_{i}$.
In the latter case, we put $\mathfrak{o}_{i} := \zeta(\Gamma_{i}) = \zeta(\mathfrak{o}_{D_{i}})$
and denote the Schur index of $D_{i}$ by $s_{i}$.
In the former case, we put $s_{i}=1$.
In both cases, $\mathfrak{o}_{i}$ is a commutative noetherian complete local ring and we may assume that it is indecomposable.
Put
$A_{i} := F \otimes_{\mathfrak{o}} \Gamma_{i}$ so that $A := F[G] =\bigoplus_{i=1}^{k} A_{i}$.
For convenience, we also put $F_{i} = F \otimes_{\mathfrak{o}} \mathfrak{o}_{i}$ so that
$\zeta(A) = \bigoplus_{i=1}^{k} F_{i}$; note that $F_{i}$ is not necessarily a field.\\

We denote the reduced trace from $A_{i}$ to $F$ by $\tr_{i}$; then we have
\[
\tr_{i} = \Tr_{F_{i}/F} \circ \tr_{A_{i} / F_{i}},
\]
where $\Tr_{F_{i}/F}$ is the ordinary trace from $F_{i}$ to $F$, and $\tr_{A_{i} / F_{i}}$ is the reduced trace from
$A_{i}$ to $F_{i}$.
For the ordinary trace $\Tr_{A/F}$ from $A$ to $F$ we thus have
\[
\Tr_{A/F}(x) = \sum_{i=1}^{k} n_{i} s_{i} \tr_{i}(x_{i})
\]
for $x = \sum_{i=1}^{k} x_{i} \in A = \bigoplus_{i=1}^{k} A_{i}$.
Abusing notation, we define the inverse different of $\Gamma_{i}$ with respect to the reduced
trace $\tr_{i}$ to be
\[
\mathfrak{D}_{i}^{-1} := \left\{x \in A_{i} \mid \tr_{i}(x \Gamma_{i}) \subset \mathfrak{o} \right\}.
\]
In the case where $\Gamma_{i}$ is a matrix ring over the valuation ring $\mathfrak{o}_{D_{i}}$ of a
skewfield $D_{i}$, this is in fact an invertible $\Gamma_{i}$-lattice,
and $\mathfrak D_{i}$ is called the different of $\Gamma_{i}$ with respect to $\tr_{i}$.
However, we note that  $\mathfrak{D}_{i}^{-1}$ is \emph{not} invertible in general.

\begin{prop} \label{prop:leftright-conductor}
We have
$
(\Gamma : \Lambda)_{l}
= (\Gamma : \Lambda)_{r} = \bigoplus_{i=1}^{k} \frac{|G|}{n_{i} s_{i}} \mathfrak{D}_{i}^{-1}.
$
\end{prop}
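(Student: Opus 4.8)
The plan is to compute both conductors by duality with respect to the trace form on $A$. Equip $A$ with the symmetric $F$-bilinear form $T(x,y) := \Tr_{A/F}(xy)$, which is nondegenerate because $A$ is separable, and for a full $\mathfrak{o}$-lattice $L$ in $A$ set $L^{\vee} := \{x \in A \mid T(x,L) \subseteq \mathfrak{o}\}$. I would first record the standard formal properties: $L \mapsto L^{\vee}$ is inclusion-reversing and involutive on full lattices, $(\lambda L)^{\vee} = \lambda^{-1} L^{\vee}$ for $\lambda \in F^{\times}$, and, using cyclicity of the trace, if $L$ is a left (resp.\ right) module over a subring $R$ of $A$ then $L^{\vee}$ is a right (resp.\ left) $R$-module (since $T(xr,L) = T(x,rL) \subseteq T(x,L)$, resp.\ $T(rx,L) = T(x,Lr)$). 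I would also note that for a full lattice $X$ the set $\Gamma X$ is a full lattice and is the smallest left $\Gamma$-submodule of $A$ containing $X$.

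Next I would observe that $\Lambda = \mathfrak{o}[G]$ is symmetric for $T$: since $\Tr_{A/F}(g) = |G|$ if $g = 1$ and $0$ otherwise, the $\mathfrak{o}$-basis $\{g\}_{g \in G}$ of $\Lambda$ has $T$-dual basis $\{|G|^{-1}g^{-1}\}_{g \in G}$, so $\Lambda^{\vee} = |G|^{-1}\Lambda$. Now, by definition $(\Gamma:\Lambda)_{l}$ is the largest right $\Gamma$-submodule of $\Lambda$; dualizing and using the formal properties, its dual $((\Gamma:\Lambda)_{l})^{\vee}$ is the smallest left $\Gamma$-submodule of $A$ containing $\Lambda^{\vee}$, i.e.\ $\Gamma\Lambda^{\vee}$. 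Since $\Lambda^{\vee} = |G|^{-1}\Lambda$ and $\Gamma\Lambda = \Gamma$ (as $\Lambda \subseteq \Gamma$ and $\Gamma$ is a ring with identity), this is $|G|^{-1}\Gamma$, whence applying $(-)^{\vee}$ again gives $(\Gamma:\Lambda)_{l} = (|G|^{-1}\Gamma)^{\vee} = |G|\,\Gamma^{\vee}$. Running the identical argument with left and right interchanged gives $(\Gamma:\Lambda)_{r} = |G|\,\Gamma^{\vee}$ as well, which at once proves the equality of the two conductors and reduces the statement to computing $\Gamma^{\vee}$.

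Finally, because $e_{i}e_{j} = 0$ for $i \neq j$, the decomposition $A = \bigoplus_{i=1}^{k} A_{i}$ is orthogonal for $T$, so $\Gamma^{\vee} = \bigoplus_{i=1}^{k}\{x \in A_{i} \mid T(x,\Gamma_{i}) \subseteq \mathfrak{o}\}$. On the block $A_{i}$ one has $T|_{A_{i}} = \Tr_{A/F}|_{A_{i}} = n_{i}s_{i}\,\tr_{i}$, so the condition $T(x,\Gamma_{i}) \subseteq \mathfrak{o}$ is equivalent to $\tr_{i}\!\big((n_{i}s_{i}x)\,\Gamma_{i}\big) \subseteq \mathfrak{o}$, i.e.\ to $n_{i}s_{i}x \in \mathfrak{D}_{i}^{-1}$; hence the $i$-th block of $\Gamma^{\vee}$ is $(n_{i}s_{i})^{-1}\mathfrak{D}_{i}^{-1}$. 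Multiplying by $|G|$ yields $(\Gamma:\Lambda)_{l} = (\Gamma:\Lambda)_{r} = \bigoplus_{i=1}^{k}\frac{|G|}{n_{i}s_{i}}\mathfrak{D}_{i}^{-1}$.

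I expect the difficulties here to be bookkeeping rather than conceptual. The two points needing care are: keeping the left/right module structures straight when passing to duals, so that the symmetry argument genuinely delivers $(\Gamma:\Lambda)_{l} = (\Gamma:\Lambda)_{r}$; and correctly tracking the scalars, namely the factor $n_{i}s_{i}$ coming from $\Tr_{A/F}|_{A_{i}} = n_{i}s_{i}\tr_{i}$ together with the $|G|^{-1}$ from the self-duality of $\mathfrak{o}[G]$, since $\mathfrak{D}_{i}^{-1}$ is defined relative to the reduced trace $\tr_{i}$ and not to $\Tr_{A/F}$. It is also worth verifying explicitly the auxiliary claim that $\Gamma X$ is the smallest left $\Gamma$-lattice containing a given full lattice $X$, as this is what legitimizes the dual characterization of the conductor used above.
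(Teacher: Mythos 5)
Your argument is correct, and it is precisely the duality argument behind \cite[Theorem 27.8]{MR632548}, which the paper simply cites with the remark that the proof carries over verbatim. The key points — self-duality $\Lambda^{\vee} = |G|^{-1}\Lambda$ of the group ring under the bilinear trace form, the characterisation of the left (resp.\ right) conductor as $|G|\,\Gamma^{\vee}$ via the order-reversing, side-swapping involution $L \mapsto L^{\vee}$, and the block-by-block rescaling $\Tr_{A/F}|_{A_i} = n_i s_i\,\tr_i$ — are exactly what makes the Curtis--Reiner proof apply unchanged to the present nice Fitting order $\Gamma$ in place of a maximal order.
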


\begin{proof}
This is essentially the same proof as that of \cite[Theorem 27.8]{MR632548}.
\end{proof}

\begin{corollary}\label{cor:conductor-formula}
For each $i$, let
$\mathfrak{D}^{-1}(\mathfrak{o}_{i} / \mathfrak{o}) = \left\{x \in F_{i} \mid \Tr_{F_{i}/F}(x \mathfrak{o}_{i}) \subset \mathfrak{o} \right\}$,
which is the usual inverse different if $F_{i}$ is a field with ring of integers $\mathfrak{o}_{i}$.
Then we have
\[
\bigoplus_{i=1}^{k} \frac{|G|}{n_{i} s_{i}} \mathfrak{D}^{-1}(\mathfrak{o}_{i} / \mathfrak{o})
\subset \mathcal{F}(\Gamma, \Lambda).
\]
\end{corollary}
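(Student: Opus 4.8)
The plan is to reduce the asserted inclusion to a componentwise comparison of inverse differents, and then to exploit the factorisation $\tr_i = \Tr_{F_i/F}\circ\tr_{A_i/F_i}$. First I would combine Proposition~\ref{prop:leftright-conductor} with the direct sum decompositions $A=\bigoplus_i A_i$ and $\zeta(A)=\bigoplus_i F_i$ (and $\zeta(\Gamma)=\bigoplus_i\mathfrak{o}_i$) to write
\[
\mathcal{F}(\Gamma,\Lambda) = \zeta(\Gamma)\cap(\Gamma:\Lambda)_l = \bigoplus_{i=1}^k\left(\mathfrak{o}_i\cap\tfrac{|G|}{n_i s_i}\mathfrak{D}_i^{-1}\right),
\]
where $\mathfrak{D}_i^{-1}\subseteq A_i$. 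Since $(\Gamma:\Lambda)_l\subseteq\Lambda\subseteq\Gamma$ by the definition of the left conductor, each summand $\frac{|G|}{n_i s_i}\mathfrak{D}_i^{-1}$ of $(\Gamma:\Lambda)_l$ lies in $\Gamma\cap A_i=\Gamma_i$, so $\big(\frac{|G|}{n_i s_i}\mathfrak{D}_i^{-1}\big)\cap F_i\subseteq\Gamma_i\cap F_i=\zeta(\Gamma_i)=\mathfrak{o}_i$; hence the $i$th component of $\mathcal{F}(\Gamma,\Lambda)$ is exactly $\big(\frac{|G|}{n_i s_i}\mathfrak{D}_i^{-1}\big)\cap F_i$. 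As $\frac{|G|}{n_i s_i}$ is a unit in $F$ (it divides $|G|$, which is invertible) and $\mathfrak{D}^{-1}(\mathfrak{o}_i/\mathfrak{o})\subseteq F_i$, it will then be enough to prove $\mathfrak{D}^{-1}(\mathfrak{o}_i/\mathfrak{o})\subseteq\mathfrak{D}_i^{-1}$ for each $i$.

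To establish this inclusion I would take $x\in F_i$ with $\Tr_{F_i/F}(x\mathfrak{o}_i)\subseteq\mathfrak{o}$ and, for an arbitrary $y\in\Gamma_i$, compute
\[
\tr_i(xy) = \Tr_{F_i/F}\big(\tr_{A_i/F_i}(xy)\big) = \Tr_{F_i/F}\big(x\cdot\tr_{A_i/F_i}(y)\big),
\]
using the $F_i$-linearity of the reduced trace and the centrality of $x$. Granting the intermediate fact that $\tr_{A_i/F_i}(\Gamma_i)\subseteq\mathfrak{o}_i$, one gets $x\cdot\tr_{A_i/F_i}(y)\in x\mathfrak{o}_i$, hence $\tr_i(xy)\in\Tr_{F_i/F}(x\mathfrak{o}_i)\subseteq\mathfrak{o}$; since $y$ was arbitrary, $\tr_i(x\Gamma_i)\subseteq\mathfrak{o}$, i.e.\ $x\in\mathfrak{D}_i^{-1}$. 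Putting the two steps together, $\mathfrak{D}^{-1}(\mathfrak{o}_i/\mathfrak{o})\subseteq\mathfrak{D}_i^{-1}\cap F_i$, so $\frac{|G|}{n_i s_i}\mathfrak{D}^{-1}(\mathfrak{o}_i/\mathfrak{o})\subseteq\big(\frac{|G|}{n_i s_i}\mathfrak{D}_i^{-1}\big)\cap F_i$, which is the $i$th component of $\mathcal{F}(\Gamma,\Lambda)$; summing over $i$ finishes the argument.

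The step I expect to be the main obstacle is the verification that $\tr_{A_i/F_i}(\Gamma_i)\subseteq\mathfrak{o}_i$. One cannot simply invoke a general principle that the reduced trace of an order lands in its centre, because $\mathfrak{o}_i$ is allowed to be non-integrally-closed; instead I would argue from the explicit matrix description of $\Gamma_i$. If $\Gamma_i=M_{n_i\times n_i}(\mathfrak{o}_i)$, the reduced trace over $F_i$ is the ordinary matrix trace, which visibly maps into $\mathfrak{o}_i$. If $\Gamma_i=M_{n_i\times n_i}(\mathfrak{o}_{D_i})$, the reduced trace over $F_i$ is $\tr_{D_i/F_i}$ applied to the sum of the diagonal entries, so it suffices to know $\tr_{D_i/F_i}(\mathfrak{o}_{D_i})\subseteq\mathfrak{o}_i$; here $\mathfrak{o}_i=\zeta(\mathfrak{o}_{D_i})$ is the valuation ring of the field $F_i$, hence integrally closed, and the reduced trace of an element of the maximal order $\mathfrak{o}_{D_i}$ (whose elements are integral over $\mathfrak{o}_i$) therefore lands in $\mathfrak{o}_i$ (see \cite[\S 27]{MR632548}). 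With this in place the computation of the previous paragraph goes through, completing the proof.
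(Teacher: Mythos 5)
Your argument is correct and takes essentially the same route as the paper: it reduces, via Proposition~\ref{prop:leftright-conductor} and the componentwise description $\mathcal{F}(\Gamma,\Lambda)=\zeta(\Gamma)\cap(\Gamma:\Lambda)_{l}$, to the inclusion \eqref{eqn:different-inclusion}, which the paper simply asserts. Your verification of that inclusion---using the factorisation $\tr_i=\Tr_{F_i/F}\circ\tr_{A_i/F_i}$ together with the observation that $\tr_{A_i/F_i}(\Gamma_i)\subset\mathfrak{o}_i$ (checked separately in the matrix-ring-over-$\mathfrak{o}_i$ and matrix-ring-over-$\mathfrak{o}_{D_i}$ cases), and using $(\Gamma:\Lambda)_l\subset\Lambda$ to deal with the $\mathfrak{o}_i$-containment---is the natural way to fill in the step the paper leaves unproved.
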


\begin{proof}
For each $i$, we have an inclusion
\begin{equation} \label{eqn:different-inclusion}
\frac{|G|}{n_{i} s_{i}} \mathfrak{D}^{-1}(\mathfrak{o}_{i} / \mathfrak{o})
\subset \frac{|G|}{n_{i} s_{i}} \mathfrak{D}_{i}^{-1} \cap \mathfrak{o}_{i}.
\end{equation}
The result now follows
since $\zeta(\Gamma) = \bigoplus_{i=1}^{k} \mathfrak{o}_{i}$ and
$\mathcal{F}(\Gamma, \Lambda) = \zeta(\Gamma) \cap (\Gamma : \Lambda)_{l}$.
\end{proof}

\begin{remark}\label{rmk:Jacobinski-formula}
If $\Gamma$ is a maximal order and $\mathfrak o$ is the ring of integers in a local field of characteristic zero, Jacobinski's central conductor formula
\cite[Theorem 3]{MR0204538} (also see \cite[Theorem 27.13]{MR632548})
implies that the inclusion (\ref{eqn:different-inclusion}) is an equality
for each $i$; thus we have also an equality in Corollary \ref{cor:conductor-formula}.
However, the argument that shows equality can not be extended to the more general situation of nice Fitting orders,
since our notion of the inverse different does not lead to invertible lattices in general.
\end{remark}

\begin{theorem}\label{thm:compare-nice-annihilators}
Let
$\mathcal{J}
= \bigoplus_{i=1}^{k} \frac{|G|}{n_{i} s_{i}} \mathfrak{D}^{-1}(\mathfrak{o}_{i} / \mathfrak{o})$.
Then for any finitely generated $\Lambda$-module $M$ we have
\[
\textstyle{
\mathcal{J} \cdot \Fit_{\Lambda}^{\max}(M) \subset
\mathcal{J} \cdot \Fit_{\Gamma}^{\max}(\Gamma \otimes_{\Lambda} M) \subset
\Ann_{\zeta(\Lambda)}(M).
}
\]
\end{theorem}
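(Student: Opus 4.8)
The plan is to deduce the theorem by assembling results already established in this section, the crucial structural observation being that $\mathcal{J}$ is a $\zeta(\Gamma)$-submodule of $\zeta(A)$. This is immediate from the description $\mathcal{J} = \bigoplus_{i=1}^{k} \frac{|G|}{n_{i}s_{i}}\mathfrak{D}^{-1}(\mathfrak{o}_{i}/\mathfrak{o})$ together with the decomposition $\zeta(\Gamma) = \bigoplus_{i=1}^{k}\mathfrak{o}_{i}$, since each $\mathfrak{D}^{-1}(\mathfrak{o}_{i}/\mathfrak{o})$ is an $\mathfrak{o}_{i}$-submodule of $F_{i}$ and multiplication by the scalar $|G|/(n_{i}s_{i}) \in F$ preserves this. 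In particular $\mathcal{J}\cdot\zeta(\Gamma) = \mathcal{J}$.

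Since $\Gamma$ is nice by hypothesis, Proposition \ref{prop:nice-equal} gives $\mathcal{U}(\Gamma) = \mathcal{H}(\Gamma) = \zeta(\Gamma)$. For the first inclusion I would apply Proposition \ref{prop:fit-conductor}, which yields $\Fit^{\max}_{\Lambda}(M)\cdot\mathcal{U}(\Gamma) \subset \Fit^{\max}_{\Gamma}(\Gamma\otimes_{\Lambda}M)$, that is, $\Fit^{\max}_{\Lambda}(M)\cdot\zeta(\Gamma) \subset \Fit^{\max}_{\Gamma}(\Gamma\otimes_{\Lambda}M)$. Multiplying both sides by $\mathcal{J}$ and using $\mathcal{J}\cdot\zeta(\Gamma) = \mathcal{J}$ gives
\[
\mathcal{J}\cdot\Fit^{\max}_{\Lambda}(M) = \mathcal{J}\cdot\zeta(\Gamma)\cdot\Fit^{\max}_{\Lambda}(M) \subset \mathcal{J}\cdot\Fit^{\max}_{\Gamma}(\Gamma\otimes_{\Lambda}M),
\]
which is the first containment.

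For the second inclusion I would invoke Corollary \ref{cor:conductor-formula}, which is precisely the statement $\mathcal{J} \subset \mathcal{F}(\Gamma,\Lambda)$. Hence $\mathcal{J}\cdot\Fit^{\max}_{\Gamma}(\Gamma\otimes_{\Lambda}M) \subset \mathcal{F}(\Gamma,\Lambda)\cdot\Fit^{\max}_{\Gamma}(\Gamma\otimes_{\Lambda}M)$, and the right-hand side is contained in $\Ann_{\zeta(\Lambda)}(M)$ by Corollary \ref{cor:ann-conductor}, whose hypothesis $\mathcal{H}(\Gamma) = \zeta(\Gamma)$ again holds because $\Gamma$ is nice. (Equivalently, once the first inclusion is in hand, the whole chain follows from Theorem \ref{thm:fit-ann-conductor} by monotonicity, restricting $\mathcal{F}(\Gamma,\Lambda)$ down to the subideal $\mathcal{J}$.)

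Since every ingredient is already available, there is no genuine obstacle here; the theorem is essentially a packaging of Propositions \ref{prop:fit-conductor} and Corollaries \ref{cor:conductor-formula}, \ref{cor:ann-conductor} into a single explicit statement in the group ring setting. The one point that must not be overlooked is the verification that $\mathcal{J}$ is stable under multiplication by $\zeta(\Gamma)$: this is what allows the factor $\mathcal{U}(\Gamma) = \zeta(\Gamma)$ produced by Proposition \ref{prop:fit-conductor} to be absorbed into $\mathcal{J}$, so that the right-hand side of the first inclusion is $\mathcal{J}\cdot\Fit^{\max}_{\Gamma}(\Gamma\otimes_{\Lambda}M)$ rather than a strictly larger module.
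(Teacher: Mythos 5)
Your proof is correct and takes the same route as the paper, which simply cites the combination of Theorem \ref{thm:fit-ann-conductor} and Corollary \ref{cor:conductor-formula}. One small remark: the observation $\mathcal{J}\cdot\zeta(\Gamma)=\mathcal{J}$, which you flag as the crucial point, is in fact unnecessary --- since $1\in\mathcal{U}(\Gamma)$, Proposition \ref{prop:fit-conductor} already gives $\Fit^{\max}_{\Lambda}(M)\subset\Fit^{\max}_{\Gamma}(\Gamma\otimes_{\Lambda}M)$ directly, and the first containment then follows from multiplying both sides by $\mathcal{J}$ without any stability argument.
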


\begin{proof}
This is the combination of Theorem \ref{thm:fit-ann-conductor} and
Corollary \ref{cor:conductor-formula}.
\end{proof}

\begin{remark}
Conductors for completed group algebras are considered in \cite{conductor}.
\end{remark}

\subsection{Explicit computations and examples in the group ring case}
We now specialise to the following situation.
Let $\mathfrak{o}$ be the ring of integers in a local field $F$ of characteristic zero 
with algebraic closure $\overline{F}$ and residue field of characteristic $p>0$.
Let $G$ be a finite group and let $\Lambda'$ be a maximal order containing $\Lambda := \mathfrak{o}[G]$.
We have a natural decomposition
\begin{equation} \label{eqn:centre-of-Lambda'}
    \zeta(\Lambda') \simeq \bigoplus_{i=1}^{k} \mathfrak{o}_{i}',
\end{equation}
where $k$ is the number of irreducible $\overline{F}$-valued characters of $G$ modulo the action of $\Gal(\overline{F}/F)$
and each $\mathfrak{o}_{i}'$ corresponds to an irreducible $\overline{F}$-valued character $\chi_{i}$.
Note that the quotient field $F_{i}$ of $\mathfrak{o}_{i}'$ equals $F_{i} = F(\chi_{i}(g) \mid g \in G)$.

\begin{prop} \label{prop:conductor-cheap-trick}
Let $\Lambda'(G,G')= \mathfrak{o}[G] e \oplus \Lambda' (1-e)$ where
$G'$ is the commutator subgroup of $G$ and $e = |G'|^{-1} \Tr_{G'}$
(as in Definition \ref{def:max-comm-hybrid}). Then we have
\[
\mathcal{F}(\Lambda'(G,G'), \Lambda) = \mathfrak o[G] \cdot \Tr_{G'} \oplus \mathcal{F}(\Lambda', \Lambda)(1-e)
= \mathfrak o[G] \cdot \Tr_{G'} \oplus  \bigoplus_{\chi(1) \not=1} \frac{|G|}{\chi(1)} \mathfrak{D}^{-1}(\mathfrak{o}_{i}' / \mathfrak{o}).
\]
\end{prop}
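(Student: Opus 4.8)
The plan is to compute $\mathcal{F}(\Gamma,\Lambda)$, where for brevity I write $\Gamma := \Lambda'(G,G') = \mathfrak{o}[G]e \oplus \Lambda'(1-e)$ and $\Lambda := \mathfrak{o}[G]$, by splitting along the central idempotent $e$ and handling the two components separately; the explicit second equality then drops out of Jacobinski's central conductor formula (Remark \ref{rmk:Jacobinski-formula}). First I would record the structural input: since $Ae$ is the sum of the simple components of $A = F[G]$ attached to the one-dimensional characters, it is commutative, hence $\mathfrak{o}[G]e$ is commutative and $\zeta(\Gamma) = \mathfrak{o}[G]e \oplus \zeta(\Lambda')(1-e)$, so in particular $e \in \zeta(\Gamma)$. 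As $\mathcal{F}(\Gamma,\Lambda) = \zeta(\Gamma) \cap (\Gamma:\Lambda)_{l}$ is an ideal of $\zeta(\Gamma)$ and $1 \in \Gamma$, it is contained in $\Lambda = \mathfrak{o}[G]$ and decomposes as $\mathcal{F}(\Gamma,\Lambda) = \mathcal{F}(\Gamma,\Lambda)e \oplus \mathcal{F}(\Gamma,\Lambda)(1-e)$. Using $e\Gamma = \mathfrak{o}[G]e$ and $(1-e)\Gamma = \Lambda'(1-e)$ one checks that
\[
\mathcal{F}(\Gamma,\Lambda)e = \{ y \in \mathfrak{o}[G]e \mid y\,\mathfrak{o}[G]e \subset \mathfrak{o}[G]\}, \qquad \mathcal{F}(\Gamma,\Lambda)(1-e) = \{ y \in \zeta(\Lambda')(1-e) \mid y\,\Lambda'(1-e) \subset \mathfrak{o}[G]\},
\]
so it remains to identify these two sets.

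For the $e$-component I would first prove the elementary identity $\mathfrak{o}[G] \cap \mathfrak{o}[G]e = \mathfrak{o}[G]\,\Tr_{G'}$. The inclusion ``$\supseteq$'' is immediate from $\mathfrak{o}[G]\,\Tr_{G'} = |G'|\,\mathfrak{o}[G]e$, and ``$\subseteq$'' follows from a short coefficient computation: an element $\sum_g a_g g \in \mathfrak{o}[G]$ is fixed by right multiplication by $e$ precisely when the coefficients $a_g$ are constant on the left cosets of $G'$, and any such element is visibly a $\mathfrak{o}[G]$-multiple of $\Tr_{G'}$. Applying the defining property of $\mathcal{F}(\Gamma,\Lambda)e$ to the element $e \in \Gamma$ gives $\mathcal{F}(\Gamma,\Lambda)e \subseteq \mathfrak{o}[G] \cap \mathfrak{o}[G]e = \mathfrak{o}[G]\,\Tr_{G'}$; conversely, since $\Tr_{G'}$ is central in $\mathfrak{o}[G]$ and $\Tr_{G'} = |G'|e$, for any $w \in \mathfrak{o}[G]$ the element $w\,\Tr_{G'}$ lies in $\zeta(\Gamma)$ and satisfies $(w\,\Tr_{G'})\Gamma = w\,\mathfrak{o}[G]\,\Tr_{G'} \subseteq \mathfrak{o}[G]$, so $\mathfrak{o}[G]\,\Tr_{G'} \subseteq \mathcal{F}(\Gamma,\Lambda)e$. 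Hence $\mathcal{F}(\Gamma,\Lambda)e = \mathfrak{o}[G]\,\Tr_{G'}$.

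For the $(1-e)$-component the point I would exploit is that $\Gamma$ and $\Lambda'$ have the same $(1-e)$-part: for $y \in \zeta(\Lambda')(1-e)$ one has $y\Gamma = y\,\Lambda'(1-e) = y\Lambda'$, so the condition defining $\mathcal{F}(\Gamma,\Lambda)(1-e)$ is word-for-word the condition defining $\mathcal{F}(\Lambda',\Lambda)(1-e)$; together with the general observation that $\mathcal{F}(\Lambda',\Lambda) \subseteq \mathcal{F}(\Gamma,\Lambda)$ whenever $\Lambda \subseteq \Gamma \subseteq \Lambda'$, this yields $\mathcal{F}(\Gamma,\Lambda)(1-e) = \mathcal{F}(\Lambda',\Lambda)(1-e)$ and hence the first asserted equality. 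For the second equality, Jacobinski's central conductor formula applies in the present situation ($\mathfrak{o}$ is the ring of integers in a local field of characteristic zero and $\Lambda'$ is maximal; see Remark \ref{rmk:Jacobinski-formula}) and gives $\mathcal{F}(\Lambda',\Lambda) = \bigoplus_{i=1}^{k} \frac{|G|}{\chi_i(1)} \mathfrak{D}^{-1}(\mathfrak{o}_i'/\mathfrak{o})$; since $e$ is exactly the sum of those primitive central idempotents $e_i$ for which $\chi_i$ is one-dimensional, multiplying by $1-e$ deletes precisely the summands with $\chi_i(1) = 1$, leaving $\mathcal{F}(\Lambda',\Lambda)(1-e) = \bigoplus_{\chi(1) \neq 1} \frac{|G|}{\chi(1)} \mathfrak{D}^{-1}(\mathfrak{o}_i'/\mathfrak{o})$, as claimed. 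The only genuinely delicate point is the $(1-e)$-component: one must avoid the temptation to decompose $\mathfrak{o}[G]$ itself along $e$ (it does not decompose) and instead use that only the elements $y$ supported on $1-e$ are being tested, for which the products $y\Gamma$ and $y\Lambda'$ genuinely coincide.
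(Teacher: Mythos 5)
Your proof is correct and follows essentially the same strategy as the paper: decompose $\mathcal{F}(\Lambda'(G,G'),\Lambda)$ along the central idempotent $e$, identify the $e$-component with $\mathfrak{o}[G]e \cap \mathfrak{o}[G] = \mathfrak{o}[G]\Tr_{G'}$ by a coset/coefficient argument, and reduce the $(1-e)$-component to $\mathcal{F}(\Lambda',\Lambda)(1-e)$ before invoking Jacobinski's formula. The only difference is cosmetic: you spell out explicitly why $y\Gamma = y\Lambda'$ for $y$ supported on $1-e$ (a step the paper leaves implicit when it says ``$\mathcal{J}$ is of the desired form''), which is a helpful clarification rather than a different route.
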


\begin{proof}
First observe that $\mathfrak{o}[G]e$ is commutative and so $\zeta(\Lambda'(G,G'))=\mathfrak{o}[G] e \oplus \zeta(\Lambda' (1-e))$.
Moreover, $\mathcal{F}(\Lambda'(G,G'), \Lambda)$ is an ideal $\mathcal{I} \oplus \mathcal{J}$
of $\zeta(\Lambda'(G,G'))$, so we may compute $\mathcal{I}$ and $\mathcal{J}$ separately.
Since $\Lambda'(1-e)$ is maximal and (\ref{eqn:different-inclusion}) is an equality in this case
(see Remark \ref{rmk:Jacobinski-formula}), we see that $\mathcal{J}$ is of the desired form.
Now observe that
\[
\mathcal{F}(\Lambda_{G}', \mathfrak{o}[G])e
= \mathcal{I}
= ((\Lambda_{G}' : \mathfrak{o}[G])_{l}) e
= \mathfrak{o}[G]e \cap \mathfrak{o}[G]
= \mathfrak{o}[G] \cdot \Tr_{G'}.
\]
We explain the last two equalities. By definition, $\mathcal{I}$ is the largest ideal of
$\mathfrak{o}[G]e$ contained in $\mathfrak{o}[G]$, so
$\mathcal{I} \subset \mathfrak{o}[G]e \cap \mathfrak{o}[G]$.
If $xe \in \mathfrak{o}[G]$ with $x \in \mathfrak{o}[G]$, then for any $ye$ with
$y \in \mathfrak{o}[G]$ we have $(xe)(ye) = (xe)y \in \mathfrak{o}[G]$, giving the reverse inclusion.
Let $x_{1}, \ldots, x_{r}$ be a set of representatives in $G$ of
the  quotient group $G/G'$;
then $\{ x_{1}e, \ldots, x_{r}e \}$ is an $\mathfrak{o}$-basis for $\mathfrak{o}[G]e$.
Write $G'=\{h_{1}, \ldots, h_{s} \}$; then $G=\{ h_{i}x_{j}\}_{i,j}$ is an $\mathfrak{o}$-basis for
$\mathfrak{o}[G]$.
Let $x \in \mathfrak{o}[G]e$. Then we can write
$x= \lambda_{1}x_{1}e + \cdots + \lambda_{r}x_{r}e$ where each $\lambda_{k} \in \mathfrak{o}$.
Since $e=|G'|^{-1} \Tr_{G'} = |G'|^{-1}\sum_{i=1}^{s}h_{i}$, we see that
$x \in \mathfrak{o}[G]$ if and only if $|G'|$ divides each $\lambda_{k}$ if and only if
$x \in \mathfrak{o}[G] \cdot  \Tr_{G'}$.
Therefore $\mathfrak{o}[G]e \cap \mathfrak{o}[G]  = \mathfrak{o}[G] \cdot \Tr_{G'}$.
\end{proof}

\begin{example}\label{ex:D8}
Let $D_{8} = \langle a,b \mid a^{4}=b^{2}=1, bab=a^{-1} \rangle$ be the dihedral group of order $8$,
let $\Lambda=\Z_{2}[D_{8}]$, and let $\Lambda'$ be a maximal order containing $\Lambda$.
Let $\chi_{1}, \ldots, \chi_{5}$ be the $\Q_{2}$-irreducible characters of $D_{8}$, where
$\chi_{1}(1)=\cdots=\chi_{4}(1)=1$ and $\chi_{5}(1)=2$.
Let $e_{i}$ be the primitive central idempotent associated to $\chi_{i}$.
Then $\{ 8e_{1},8e_{2},8e_{3},8e_{4},4e_{5} \}$ is a $\Z_{2}$-basis of $\mathcal{F}(\Lambda',\Lambda)$
and $\{ 1+a^{2}, a+a^{3}, b+a^{2}b, ab+a^{3}b, 4e_{5}\}$ is a $\Z_{2}$-basis of
$\mathcal{F}(\Lambda'(D_{8},D_{8}'),\Lambda)$.
By using the character table of $D_{8}$ to express one basis in terms of the other and then computing the appropriate determinant, one can show that
$[\mathcal{F}(\Lambda'(D_{8},D_{8}'),\Lambda): \mathcal{F}(\Lambda',\Lambda)]_{\Z_{2}} = 2^{4}$.
Thus using $\mathcal{F}(\Lambda'(D_{8},D_{8}'),\Lambda)$ instead of $ \mathcal{F}(\Lambda',\Lambda)$
in Proposition \ref{prop:conductor-annihilation} gives an improved annihilation result.
Almost identical reasoning applies in the case $\Lambda=\Z_{2}[Q_{8}]$,
where $Q_{8}$ is the quaternion group of order $8$.
\end{example}

We now define an $\mathfrak{o}_{i}'$-ideal $\mathfrak{A}_{i}$ by
\begin{equation}\label{eq:central-cond-correct}
    \mathfrak{A}_{i} := \langle \chi_{i}(g) \mid g \in G \rangle_{\mathfrak{o}_{i}'}.
\end{equation}
Note that $\mathfrak{A}_{i} = \mathfrak{o}_{i}'$ if the degree $\chi_{i}(1)$ of the character $\chi_{i}$ is invertible in $\mathfrak{o}_{i}'$; this in particular applies to all linear characters of $G$.

\begin{prop}\label{prop:max-center-conductor}
We have an equality
\[
\mathcal{F}(\zeta(\Lambda'), \zeta(\Lambda)) = \bigoplus_{i=1}^{k} \frac{|G|}{\chi_{i}(1)} \mathfrak{A}_{i}^{-1} \mathfrak{D}^{-1}(\mathfrak{o}_{i}' / \mathfrak{o}).
\]
\end{prop}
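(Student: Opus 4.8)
The plan is to compute the central conductor one $F_i$-component at a time and, in each component, to turn the computation into a duality statement for the trace form together with elementary fractional-ideal arithmetic.

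First I would reduce to components. By definition $\mathcal{F}(\zeta(\Lambda'),\zeta(\Lambda)) = \{x\in\zeta(\Lambda')\mid x\,\zeta(\Lambda')\subset\zeta(\Lambda)\}$ is an ideal of $\zeta(\Lambda')=\bigoplus_{i=1}^{k}\mathfrak{o}_i'$, so it decomposes as $\bigoplus_{i=1}^{k}\mathfrak{a}_i$ with each $\mathfrak{a}_i$ an $\mathfrak{o}_i'$-submodule of $F_i$. Using that the $e_i$ lie in $\zeta(\Lambda')$ and testing the defining condition against $\mathfrak{o}_i'e_i\subset\zeta(\Lambda')$, one checks that $\mathfrak{a}_i=\{x\in F_i\mid x\,\mathfrak{o}_i'e_i\subset\zeta(\Lambda)\}$, and that necessarily $\mathfrak{a}_i\subset\mathfrak{o}_i'$ since $\zeta(\Lambda)\subset\zeta(\Lambda')$. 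So it suffices to identify $\mathfrak{a}_i$ with $\tfrac{|G|}{\chi_i(1)}\mathfrak{A}_i^{-1}\mathfrak{D}^{-1}(\mathfrak{o}_i'/\mathfrak{o})$.

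Next I would introduce the ordinary trace form $b(x,y)=\Tr_{A/F}(xy)$ of $A=F[G]$. For $x,y\in A$ one has $b(x,y)=|G|\cdot(xy)_1$, where $(\,\cdot\,)_1$ denotes the coefficient of the identity element; hence $b$ restricts to a nondegenerate symmetric $F$-bilinear form on $\zeta(A)=\bigoplus_i F_ie_i$ which is block-diagonal, and on the $i$-th block it is identified with $\chi_i(1)^2\cdot\Tr_{F_i/F}(\,\cdot\,\cdot\,)$. Evaluating $b$ on the $\mathfrak{o}$-basis of $\zeta(\Lambda)$ consisting of the class sums $\widehat{C}=\sum_{g\in C}g$ gives $b(\widehat{C},\widehat{C'})=|G|\,|C|$ if $C'=C^{-1}$ and $0$ otherwise, so the dual lattice of $\zeta(\Lambda)$ in $\zeta(A)$ with respect to $b$ is $\zeta(\Lambda)^{\vee}=\bigoplus_{C}\tfrac{1}{|G|\,|C|}\mathfrak{o}\,\widehat{C}$. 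By the standard biduality for lattices, $x\,\mathfrak{o}_i'e_i\subset\zeta(\Lambda)$ is equivalent to $b(x\,\mathfrak{o}_i'e_i,\zeta(\Lambda)^{\vee})\subset\mathfrak{o}$, and, since $b$ is block-diagonal, to $\chi_i(1)^2\,\Tr_{F_i/F}\!\big(x\,\mathfrak{o}_i'\cdot e_i\zeta(\Lambda)^{\vee}\big)\subset\mathfrak{o}$ under the identification of the $i$-th component of $\zeta(A)$ with $F_i$.

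Then I would compute $e_i\zeta(\Lambda)^{\vee}$ via the central character: $e_i\widehat{C}=\omega_i(\widehat{C})e_i$ with $\omega_i(\widehat{C})=|C|\chi_i(g_C)/\chi_i(1)$ for a representative $g_C\in C$, so $e_i\zeta(\Lambda)^{\vee}=\tfrac{1}{|G|\chi_i(1)}\langle\chi_i(g)\mid g\in G\rangle_{\mathfrak{o}}$. Substituting, the condition defining $\mathfrak{a}_i$ becomes $\Tr_{F_i/F}\!\big(\tfrac{\chi_i(1)}{|G|}\,x\,\mathfrak{o}_i'\langle\chi_i(g)\mid g\in G\rangle_{\mathfrak{o}}\big)\subset\mathfrak{o}$, i.e.\ $\Tr_{F_i/F}\!\big(\tfrac{\chi_i(1)}{|G|}\,x\,\mathfrak{A}_i\big)\subset\mathfrak{o}$ with $\mathfrak{A}_i$ as in \eqref{eq:central-cond-correct}. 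The crucial observation is that $\tfrac{\chi_i(1)}{|G|}\,x\,\mathfrak{A}_i$ is an honest fractional $\mathfrak{o}_i'$-ideal — the factor $\mathfrak{o}_i'$ has absorbed the mere $\mathfrak{o}$-lattice $\langle\chi_i(g)\rangle_{\mathfrak{o}}$ into the $\mathfrak{o}_i'$-ideal $\mathfrak{A}_i$ — so the defining property of the inverse different applies and the condition reads $\tfrac{\chi_i(1)}{|G|}\,x\,\mathfrak{A}_i\subset\mathfrak{D}^{-1}(\mathfrak{o}_i'/\mathfrak{o})$. As $F_i$ is a finite extension of the local field $F$, the ring $\mathfrak{o}_i'$ is a discrete valuation ring and fractional ideals are invertible, so this rearranges to $x\in\tfrac{|G|}{\chi_i(1)}\mathfrak{A}_i^{-1}\mathfrak{D}^{-1}(\mathfrak{o}_i'/\mathfrak{o})$; hence $\mathfrak{a}_i=\tfrac{|G|}{\chi_i(1)}\mathfrak{A}_i^{-1}\mathfrak{D}^{-1}(\mathfrak{o}_i'/\mathfrak{o})$, and taking the direct sum over $i$ yields the Proposition. (When $\chi_i(1)$ is a unit in $\mathfrak{o}_i'$ one has $\mathfrak{A}_i=\mathfrak{o}_i'$, and this component recovers Jacobinski's formula for $\mathcal{F}(\Lambda',\Lambda)$; cf.\ Remark \ref{rmk:Jacobinski-formula}.)

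I expect the main obstacle to be the bookkeeping with the trace form. One must compute $\zeta(\Lambda)^{\vee}$ directly from the Gram matrix of the class sums rather than from the identity $\mathfrak{o}[G]^{\vee}=\tfrac{1}{|G|}\mathfrak{o}[G]$, which is no longer valid once the form is restricted to the centre; and one must keep track of the fact that the $i$-th block $e_i\zeta(\Lambda)^{\vee}$ is only an $\mathfrak{o}$-lattice in $F_i$, not an $\mathfrak{o}_i'$-ideal, the resolution being that the product $x\,\mathfrak{o}_i'\cdot e_i\zeta(\Lambda)^{\vee}$ which actually enters the computation is an $\mathfrak{o}_i'$-ideal, and it is precisely this that makes the inverse-different characterisation and the inversion of ideals legitimate.
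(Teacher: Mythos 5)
Your argument is correct, and it reaches the result by a genuinely different route from the paper's. The paper proceeds directly: it writes down the explicit image in $F[G]$ of a product $\alpha\beta$ of central elements (the Fourier/Wedderburn inversion formula $\sum_{g}\sum_{i}\sum_{\sigma}\frac{\chi_{i}(1)}{|G|}\alpha_{i}^{\sigma}\beta_{i}^{\sigma}\chi_{i}^{\sigma}(g^{-1})g$), imposes the condition that every coefficient lie in $\mathfrak{o}$ as $\beta_{i}$ ranges over $\mathfrak{o}_{i}'$, and recognises this as the inverse-different condition. You instead organise the computation around the ordinary trace form $b(x,y)=\Tr_{A/F}(xy)=|G|(xy)_{1}$, compute the dual lattice $\zeta(\Lambda)^{\vee}$ from the Gram matrix of class sums, pass through biduality, and project the dual lattice to the $i$th component via the central character. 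The two computations encode the same orthogonality identities for characters, but your packaging is more structural: it cleanly separates the ``linear algebra'' part (lattice duality) from the ``character theory'' part ($e_{i}\widehat{C}=\omega_{i}(\widehat{C})e_{i}$), and it makes transparent where the factor $\chi_{i}(1)^{2}$ comes from (the restriction of the trace form to the centre) and why the $\mathfrak{o}$-lattice $\langle\chi_{i}(g)\rangle_{\mathfrak{o}}$ gets promoted to the $\mathfrak{o}_{i}'$-ideal $\mathfrak{A}_{i}$. The price is a little more machinery (biduality, the Gram matrix), whereas the paper's route is a single direct coefficient calculation. One minor remark: the equivalence $x\,\mathfrak{o}_{i}'e_{i}\subset\zeta(\Lambda)\iff b(x\,\mathfrak{o}_{i}'e_{i},\zeta(\Lambda)^{\vee})\subset\mathfrak{o}$ uses that $\zeta(\Lambda)$ is a full $\mathfrak{o}$-lattice in $\zeta(A)$ and that $b$ restricted to $\zeta(A)$ is nondegenerate; both hold here since $\mathfrak{o}$ is a complete DVR of residue characteristic $p$ with $|G|$ invertible in $F$ and $F$ has characteristic zero, but it is worth flagging these hypotheses explicitly.
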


\begin{proof}
Let $\alpha = \sum_{i=1}^{k} \alpha_{i}$ and $\beta = \sum_{i=1}^{k} \beta_{i}$ be elements in
$\bigoplus_{i=1}^{k} \mathfrak{o}_{i}'$.
Then the above isomorphism (\ref{eqn:centre-of-Lambda'}) maps $\alpha \beta$ to the group ring element
\[
\sum_{g \in G} \sum_{i=1}^{k} \sum_{\sigma \in \Gal(F_{i} / F)} \frac{\chi_{i}(1)}{|G|} \alpha^{\sigma}_{i} \beta^{\sigma}_{i}\chi_{i}^{\sigma}(g^{-1}) g \in \zeta(\Lambda').
\]
We see that $\alpha \zeta(\Lambda') \subset \zeta(\Lambda)$ if and only if
for all $1\leq i \leq k$, $\beta_{i} \in \mathfrak{o}_{i}'$ and all $g \in G$ we have
\[
\sum_{\sigma \in \Gal(F_i / F)}\frac{\chi_{i}(1)}{|G|} \alpha^{\sigma}_{i} \beta^{\sigma}_{i} \chi_{i}^{\sigma}(g^{-1}) \in \mathfrak o.
\]
The latter condition is equivalent to
$\frac{\chi_{i}(1)}{|G|} \Tr_{F_{i} /F}(\alpha_{i} \mathfrak{A}_{i}) \subset \mathfrak{o}$
for all $1 \leq i \leq k$, i.e.,
$\alpha_{i} \in \frac{|G|}{\chi_{i}(1)} \mathfrak{A}_{i}^{-1} \mathfrak{D}^{-1}(\mathfrak{o}_{i}' / \mathfrak{o})$.
\end{proof}

\begin{corollary} \label{cor:invertible-degrees}
If the degrees of all irreducible characters of $G$ are prime to $p$, then
\[
\mathcal{F}(\Lambda', \Lambda) = \mathcal{F}(\zeta(\Lambda'), \zeta(\Lambda)).
\]
\end{corollary}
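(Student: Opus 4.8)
The plan is to compare the two explicit formulas that are already available for the conductors on the two sides. First I would recall that in the maximal order $\Lambda'$ the building blocks $\Gamma_i$ of \S\ref{subsec:conductors-group-ring-case} satisfy $n_i s_i = \chi_i(1)$ and $\mathfrak{o}_i = \mathfrak{o}_i'$; so, combining Corollary~\ref{cor:conductor-formula} with Jacobinski's central conductor formula as recorded in Remark~\ref{rmk:Jacobinski-formula}, the inclusion \eqref{eqn:different-inclusion} is an equality for each $i$ and hence
\[
\mathcal{F}(\Lambda',\Lambda) = \bigoplus_{i=1}^{k} \frac{|G|}{\chi_i(1)}\,\mathfrak{D}^{-1}(\mathfrak{o}_i'/\mathfrak{o}).
\]
On the other side, Proposition~\ref{prop:max-center-conductor} gives
\[
\mathcal{F}(\zeta(\Lambda'),\zeta(\Lambda)) = \bigoplus_{i=1}^{k} \frac{|G|}{\chi_i(1)}\,\mathfrak{A}_i^{-1}\mathfrak{D}^{-1}(\mathfrak{o}_i'/\mathfrak{o}),
\]
so the entire claim reduces to showing that $\mathfrak{A}_i = \mathfrak{o}_i'$ for every $i$ under the stated hypothesis.

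Next I would exploit the hypothesis that every degree $\chi_i(1)$ is prime to $p$. Since $\mathfrak{o}$ is the ring of integers in a local field of characteristic zero with residue field of characteristic $p$, a rational integer is a unit in $\mathfrak{o}$ precisely when it is prime to $p$; hence each $\chi_i(1)$ is a unit in $\mathfrak{o}$, and a fortiori in the overring $\mathfrak{o}_i'$. By the observation immediately following \eqref{eq:central-cond-correct}, invertibility of $\chi_i(1)$ in $\mathfrak{o}_i'$ forces $\mathfrak{A}_i = \mathfrak{o}_i'$, so $\mathfrak{A}_i^{-1} = \mathfrak{o}_i'$ for every $i$. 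Substituting this into the formula of Proposition~\ref{prop:max-center-conductor} yields exactly the displayed formula for $\mathcal{F}(\Lambda',\Lambda)$, which is the asserted equality.

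As a consistency check — and an alternative route for one of the two inclusions — note that $\mathcal{F}(\Lambda',\Lambda) \subset \mathcal{F}(\zeta(\Lambda'),\zeta(\Lambda))$ holds unconditionally: if $x \in \zeta(\Lambda')$ with $x\Lambda' \subset \Lambda$, then $x\zeta(\Lambda') \subset \zeta(\Lambda') \cap \Lambda \subset \zeta(\Lambda)$, the last containment because $\zeta(\Lambda')$ centralises all of $\Lambda' \supset \Lambda$. Thus it is only the reverse inclusion that uses the hypothesis on character degrees. I do not expect a genuine obstacle here: all of the substance is carried by Remark~\ref{rmk:Jacobinski-formula} (Jacobinski's formula, to pin down $\mathcal{F}(\Lambda',\Lambda)$ exactly rather than as a containment) and by Proposition~\ref{prop:max-center-conductor}, and the only point to watch is that the extra factor $\mathfrak{A}_i^{-1}$ distinguishing the two conductors becomes trivial exactly when $\chi_i(1)$ is a $p$-adic unit, which is precisely what the hypothesis guarantees.
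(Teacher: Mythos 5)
Your proof is correct and follows essentially the same route as the paper: combine Jacobinski's formula (via Remark \ref{rmk:Jacobinski-formula}) to pin down $\mathcal{F}(\Lambda',\Lambda)$ exactly, and Proposition \ref{prop:max-center-conductor} for $\mathcal{F}(\zeta(\Lambda'),\zeta(\Lambda))$, then observe that the hypothesis on character degrees makes each $\chi_i(1)$ a unit in $\mathfrak{o}_i'$ so that $\mathfrak{A}_i = \mathfrak{o}_i'$. The additional observation that $\mathcal{F}(\Lambda',\Lambda) \subset \mathcal{F}(\zeta(\Lambda'),\zeta(\Lambda))$ holds unconditionally is a correct and sensible sanity check, though it is not needed for the argument.
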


\begin{proof}
As noted above, we have $\mathfrak{A}_{i} = \mathfrak{o}_{i}'$ for all $1 \leq i \leq k$ in this case.
Hence the result follows from Proposition \ref{prop:max-center-conductor}
and Jacobinski's central conductor formula (see Remark \ref{rmk:Jacobinski-formula}).
\end{proof}

\begin{corollary} \label{cor:glueing-conductors}
We have the containment
\[
\mathfrak{o}[G] \cdot \Tr_{G'} \oplus \bigoplus_{i=1 \atop \chi_{i}(1) \neq 1}^{k} \frac{|G|}{\chi_{i}(1)} \mathfrak{A}_{i}^{-1} \mathfrak{D}^{-1}(\mathfrak{o}_{i}' / \mathfrak{o}) \subset \mathcal{H}(\Lambda).
\]
\end{corollary}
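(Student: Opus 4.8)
The plan is to exhibit the claimed direct sum as the sum of two submodules of $\zeta(\Lambda)$, each of which has already been shown to lie in $\mathcal{H}(\Lambda)$, and then to use that $\mathcal{H}(\Lambda)$ is an additive group (being an ideal of $\mathcal{I}(\Lambda)$, see \S\ref{subsec:adjoints}). The first submodule is $\mathfrak{o}[G]\cdot\Tr_{G'}$, which is the first direct summand appearing in Proposition \ref{prop:conductor-cheap-trick}; the second is $\bigoplus_{\chi_i(1)\neq 1}\frac{|G|}{\chi_i(1)}\mathfrak{A}_i^{-1}\mathfrak{D}^{-1}(\mathfrak{o}_i'/\mathfrak{o})$, which is exactly the part of the central conductor $\mathcal{F}(\zeta(\Lambda'),\zeta(\Lambda))$ of Proposition \ref{prop:max-center-conductor} supported away from the linear characters.

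First I would deal with $\mathfrak{o}[G]\cdot\Tr_{G'}$. Set $\Gamma := \Lambda'(G,G')$. By Proposition \ref{prop:max-comm-hybrid} this is a nice Fitting order with $\Lambda=\mathfrak{o}[G]\subset\Gamma\subset\Lambda'$, so $\mathcal{H}(\Gamma)=\zeta(\Gamma)$ by Proposition \ref{prop:nice-equal}, and hence $\mathcal{F}(\Gamma,\Lambda)\subset\mathcal{H}(\Lambda)$ by Proposition \ref{prop:conductor-annihilation}. Since $\mathfrak{o}[G]\cdot\Tr_{G'}$ is a direct summand of $\mathcal{F}(\Gamma,\Lambda)$ by Proposition \ref{prop:conductor-cheap-trick}, it lies in $\mathcal{H}(\Lambda)$. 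Next, for the second submodule, Proposition \ref{prop:max-center-conductor} expresses $\mathcal{F}(\zeta(\Lambda'),\zeta(\Lambda))$ as an internal direct sum over all $i$, so its sub-sum over the indices with $\chi_i(1)\neq 1$ is contained in $\mathcal{F}(\zeta(\Lambda'),\zeta(\Lambda))$; and the latter is contained in $\mathcal{H}(\Lambda)$ by Proposition \ref{prop:conductor-of-centres-annihilates}.

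Combining the two, the sum of these submodules lies in $\mathcal{H}(\Lambda)+\mathcal{H}(\Lambda)=\mathcal{H}(\Lambda)$, and this sum is precisely the direct sum in the statement (direct because the first summand is supported on the linear-character components of $\zeta(\Lambda')$ and the second on the remaining ones). I expect no serious obstacle; the only point worth flagging is that the two enlargements over $\mathcal{F}(\Lambda',\Lambda)$ appearing in the statement have different provenances — the enlargement $\mathfrak{o}[G]\cdot\Tr_{G'}$ in the linear part comes from the hybrid order $\Lambda'(G,G')$ via Proposition \ref{prop:conductor-cheap-trick}, whereas the factor $\mathfrak{A}_i^{-1}$ in the non-linear part comes from $\mathcal{F}(\zeta(\Lambda'),\zeta(\Lambda))$ via Proposition \ref{prop:max-center-conductor} — so no single order gives the full containment and one genuinely has to glue the two contributions as above.
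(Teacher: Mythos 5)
Your argument is correct and matches the paper's proof: the paper cites Propositions \ref{prop:conductor-of-centres-annihilates}, \ref{prop:conductor-cheap-trick} and \ref{prop:max-center-conductor} as giving the result ``immediately'', leaving implicit exactly the supporting facts you supply (Propositions \ref{prop:max-comm-hybrid}, \ref{prop:nice-equal} and \ref{prop:conductor-annihilation} to get $\mathcal{F}(\Lambda'(G,G'),\Lambda)\subset\mathcal{H}(\Lambda)$, plus additivity of $\mathcal{H}(\Lambda)$ as an ideal of $\mathcal{I}(\Lambda)$). The decomposition into a linear-character piece coming from the hybrid order and a non-linear piece coming from $\mathcal{F}(\zeta(\Lambda'),\zeta(\Lambda))$ is precisely the ``glueing'' the corollary's label refers to.
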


\begin{proof}
This is an immediate consequence of Propositions \ref{prop:conductor-of-centres-annihilates},
\ref{prop:conductor-cheap-trick} and \ref{prop:max-center-conductor}.
\end{proof}

\begin{example} \label{ex:D2p}
Let $p$ be an odd prime and let
$D_{2p}=\langle x, y \mid x^{p}=y^{2}=1, yx=x^{-1}y \rangle$ be the
dihedral group of order $2p$.
Let $\Lambda = \Z_{p}[D_{2p}]$ and let $\Lambda'$ be a maximal $\Z_{p}$-order containing $\Lambda$.
Following \cite[Example 7.39]{MR632548}, there is a decomposition
\begin{equation}\label{eqn:D2p-decomp}
\Q_{p}[D_{2p}] \simeq \Q_{p} \oplus \Q_{p} \oplus A_{p},
\end{equation}
where $A_{p}$ is the twisted group algebra $\Q_{p}(\zeta_{p}) \oplus \Q_{p}(\zeta_{p})y$;
here, $\zeta_{p}$ denotes a primitive $p$th root of unity and multiplication in $A_{p}$
is given by $y^{2}=1$ and $\alpha y = y \tau(\alpha)$ for $\alpha \in \Q_{p}(\zeta_{p})$,
where $\tau$ denotes the unique element in $\Gal(\Q_{p}(\zeta_{p})/\Q_{p})$ of order $2$.
The surjection $\Q_{p}[D_{2p}] \onto A_{p}$ is given by $x \mapsto \zeta_{p}$ and $y \mapsto y$.
The idempotents corresponding to \eqref{eqn:D2p-decomp} are
\[
e_{1} = \frac{1}{2p} \sum_{g \in D_{2p}} g, \quad  e_{2} = \frac{1}{2p}(1-y) \cdot \sum_{i=0}^{p-1} x^{i},
\quad e_{3} = 1 - e_{1} - e_{2}.
\]
Since $A_{p}$ is not a skewfield, there must be an isomorphism $A_{p} \simeq M_{2 \times 2}(E_{p})$,
where $E_{p}=\Q_{p}(\zeta_{p}+\zeta_{p}^{-1})$ is the unique subfield of $\Q_{p} (\zeta_{p})$
such that $[ \Q_{p}(\zeta_{p}):E_{p}]=2$.
To compute the reduced norms, however, it is more convenient to work
with the irreducible matrix representation of $A_{p}$ over $\Q_{p} (\zeta_{p})$
given by
\[
\alpha \mapsto \left(\begin{array}{cc} \alpha & 0 \\0 & \tau(\alpha) \end{array} \right), \quad
        y \mapsto \left(\begin{array}{cc} 0 & 1 \\1 & 0 \end{array} \right), \quad \alpha \in \Q_{p}(\zeta_{p}).
\]
It is now easy to check that
\[
\nr(y) = e_{1} - e_{2} - e_{3}, \quad \nr(-y) = - e_{1} + e_{2} - e_{3}.
\]
Since $\nr(1) = 1$ and $2 \in \Z_{p}^{\times}$, we conclude that
$e_{i} \in \mathcal{U}(\Lambda)$ for $i=1,2,3$.

For $r \in \N$ we have
\[
e_{3}\nr(x^{r}+x^{-r})
= \det \left(\begin{array}{cc} \zeta_{p}^{r} + \zeta_{p}^{-r} & 0 \\0 & \zeta_{p}^{r} + \zeta_{p}^{-r}\end{array}\right)
= (\zeta_{p}^{r} + \zeta_{p}^{-r})^{2} = \zeta_{p}^{2r} + \zeta_{p}^{-2r} + 2.
\]
As $p$ is odd we can choose $r \in \N$ such that $2r \equiv 1 \mod p$. Since we already know that
$e_{1}, e_{2}, e_{3} \in \mathcal{U}(\Lambda)  \subset \mathcal{I}(\Lambda)$,
this shows that $e_{3}(\zeta_{p} + \zeta_{p}^{-1}) \in \mathcal{I}(\Lambda)$.
But $\mathcal{I}(\Lambda)$ is a $\Z_{p}$-order and
$\zeta(\Lambda') \simeq \Z_{p} \oplus \Z_{p} \oplus \mathfrak{o}_{E_{p}}$,
so we conclude that $\mathcal{I}(\Lambda) = \zeta(\Lambda')$.
(In fact, with more work one can show that $x^{r}+x^{-r} \in (\Z_{p}[D_{2p}])^{\times}$
and so $\mathcal{U}(\Lambda)=\mathcal{I}(\Lambda)=\zeta(\Lambda')$.)
Since all irreducible characters have degree $1$ or $2$, Remark \ref{rmk:bounds-on-mathcalH} and Corollary \ref{cor:invertible-degrees} imply that
$\mathcal{H}(\Lambda)$ is worst possible in this case, i.e.,
$\mathcal{H}(\Lambda) = \mathcal{F}(\Lambda', \Lambda)$.
\end{example}

\begin{example}
We continue with Example \ref{ex:D8}, where $G = D_{8}$ is the dihedral group of order $8$ and $\Lambda = \mathbb{Z}_2[D_{8}]$. There is only one $\mathbb{Q}_{2}$-irreducible
non-linear character of $D_{8}$ which was denoted by $\chi_{5}$. This character is of degree two, and a computation shows that $\chi_{5}(g)$ either equals $0$ or $2$ for any $g \in D_8$;
hence in the notation of \eqref{eq:central-cond-correct} we have $\mathfrak{A}_{5} = 2 \cdot \mathbb{Z}_{2}$.
If $\Lambda'$ denotes a maximal order containing $\Lambda$ then Proposition \ref{prop:max-center-conductor}
and Remark \ref{rmk:Jacobinski-formula} (respectively) imply that
\begin{eqnarray*}
 \mathcal{F}(\zeta(\Lambda'), \zeta(\Lambda)) &=& 2^{3}(\mathbb{Z}_{2} \oplus \mathbb{Z}_{2} \oplus \mathbb{Z}_{2} \oplus \mathbb{Z}_{2}) \oplus 2 \mathbb{Z}_{2}, \\
 \textrm{and} \quad  \mathcal{F}(\Lambda', \Lambda) &=& 2^{3}(\mathbb{Z}_{2} \oplus \mathbb{Z}_{2} \oplus \mathbb{Z}_{2} \oplus \mathbb{Z}_{2}) \oplus 4 \mathbb{Z}_{2}.
\end{eqnarray*}
By Corollary \ref{cor:glueing-conductors} we find that
\[
    \mathbb{Z}_{2}[D_{8}] \cdot \Tr_{D_{8}} \oplus 2 \mathbb{Z}_{2} \subset \mathcal{H}(\Lambda).
\]
Thus by the index computation in Example \ref{ex:D8} we have
\[
[\mathbb{Z}_{2}[D_{8}] \cdot \Tr_{D_{8}} \oplus 2 \mathbb{Z}_{2} :\mathcal{F}(\Lambda', \Lambda)]_{\Z_{2}} =2^{5},
\]
and so the annihilation result given therein can be further improved slightly.
More generally, if $\Lambda = \mathbb{Z}_{2}[D_{2^{a}}]$ with $a \geq 3$,
then one can show that
\[
[ \mathcal{F}(\zeta(\Lambda'), \zeta(\Lambda)) : \mathcal{F}(\Lambda', \Lambda) ]_{\Z_{2}}
= 2^{a-2}.
\]
\end{example}

\bibliography{NonCommFitt_Bib}{}
\bibliographystyle{amsalpha}

\end{document}